\documentclass[a4paper,11pt,reqno]{amsart}

\usepackage{amsfonts}
\usepackage{amssymb}
\usepackage{amscd}
\usepackage{amsthm}
\usepackage{a4wide}
\usepackage{mathrsfs}
\usepackage[applemac]{inputenc}
\usepackage{amsmath}
\usepackage{amssymb}
\usepackage{amsthm}
\usepackage{textcomp}
\usepackage{graphicx}
\usepackage{enumerate}
\usepackage{mathrsfs}
\usepackage{frcursive}
\usepackage{tikz}
\usepackage[cyr]{aeguill}
\usepackage{xspace}
\usepackage{hyperref}
\usepackage{appendix}

\newtheorem{defn}{Definition}[section]
\newtheorem{lemma}[defn]{Lemma}
\newtheorem{prop}[defn]{Proposition}
\newtheorem{theo}[defn]{Theorem}
\newtheorem{coro}[defn]{Corollary}

\newtheorem{rk}[defn]{Remark}

\def\Ric{\mathop{\rm Ric}\nolimits}
\def\R{\mathop{\rm R}\nolimits}

\def\Rm{\mathop{\rm Rm}\nolimits}
\def\tr{\mathop{\rm tr}\nolimits}

\def\diam{\mathop{\rm diam}\nolimits}
\def\vol{\mathop{\rm vol}\nolimits}
\def\eucl{\mathop{\rm eucl}\nolimits}
\def\dim{\mathop{\rm dim}\nolimits}
\def\vol{\mathop{\rm Vol}\nolimits}

\def\inj{\mathop{\rm inj}\nolimits}

\def\div{\mathop{\rm div}\nolimits}

\def\A{\mathop{\rm A}\nolimits}

\def\AVR{\mathop{\rm AVR}\nolimits}
\def\Id{\mathop{\rm Id}\nolimits}

\def\cat#1{{\mathfrak{#1}}}

\def\Ric{\mathop{\rm Ric}\nolimits}

\def\Rm{\mathop{\rm Rm}\nolimits}
\def\tr{\mathop{\rm tr}\nolimits}

\def\diam{\mathop{\rm diam}\nolimits}
\def\vol{\mathop{\rm vol}\nolimits}
\def\eucl{\mathop{\rm eucl}\nolimits}
\def\dim{\mathop{\rm dim}\nolimits}
\def\vol{\mathop{\rm Vol}\nolimits}

\def\inj{\mathop{\rm inj}\nolimits}

\def\div{\mathop{\rm div}\nolimits}

\def\A{\mathop{\rm A}\nolimits}

\def\AVR{\mathop{\rm AVR}\nolimits}
\def\Id{\mathop{\rm Id}\nolimits}

\def\cat#1{{\mathfrak{#1}}}

\def\Sym{\mathop{\rm Sym}\nolimits}

\def\crit{\mathop{\rm Crit}\nolimits}

\newcommand{\nfa}{\arrowvert\nabla f\arrowvert}

\title[Asymptotic estimates and compactness of expanding gradient Ricci solitons]{Asymptotic estimates and compactness of expanding gradient Ricci solitons}
\author[Alix Deruelle]{Alix Deruelle}
\address[Alix Deruelle]{Mathematics Institute, University of Warwick, Gibbet Hill Rd, Coventry, West Midlands CV4 7AL}
\email{A.Deruelle@warwick.ac.uk}

%\date{}
\begin{document}
\begin{abstract}
We first investigate the asymptotics of conical expanding gradient Ricci solitons by proving sharp decay rates to the asymptotic cone both in the generic and the asymptotically Ricci flat case. We then establish a compactness theorem concerning nonnegatively curved expanding gradient Ricci solitons.
  \end{abstract}
\maketitle

\section{Introduction}
The Ricci flow, introduced by Hamilton in the early eighties, is a non linear heat equation on the space of metrics of a given manifold, modulo the action of diffeomorphisms and homotheties. Therefore, one expects the Ricci flow to smooth out singular geometric structures : we consider here the smoothing of metric cones over smooth compact manifolds by expanding self-similarities of the Ricci flow. More precisely, we recall that an expanding gradient Ricci soliton is a triplet $(M,g,\nabla f)$ where $f:M\rightarrow \mathbb{R}$ is a smooth function such that $$\nabla^2f=\Ric(g)+\frac{g}{2}.$$
At least formally, one can associate a Ricci flow solution by defining $g(\tau):=(1+\tau)\phi_{\tau}^*g$ where $(\phi_{\tau})_{\tau>-1}$ is the one parameter family of diffeomorphisms generated by the vector field $-\nabla^gf/(1+\tau)$.

Let's describe a first class of non trivial examples, i.e. non Einstein, of Ricci expanders discovered by Bryant [Chap.1,\cite{Cho-Lu-Ni-I}] : it is a one parameter family of rotationally symmetric metrics $(g_c)_{c>0}$ on $\mathbb{R}^n$ whose asymptotic cones (in the Gromov sense) are the metric cones $(C(\mathbb{S}^{n-1}),dr^2+(cr)^2g_{\mathbb{S}^{n-1}})$, with $c\in\mathbb{R}_+^*$, where $(\mathbb{S}^{n-1},g_{\mathbb{S}^{n-1}})$ is the Euclidean sphere of curvature $1$. The metrics $g_c$ have positive curvature operator if $c<1$ and are negatively curved if $c>1$. The metric cone $(C(\mathbb{S}^{n-1}),dr^2+(cr)^2g_{\mathbb{S}^{n-1}})$ is both the asymptotic cone of $(M,g_c)$ and the singular initial condition of the associated Ricci flow at time $\tau=-1$. Such a phenomenon has been generalized at least in two directions. In the positively curved case, Schulze and Simon \cite{Sch-Sim} have shown that most of the asymptotic cones of non collapsed Riemannian manifolds are smoothed out by Ricci gradient expanders. On the other hand, Chen and the author \cite{Che-Der} proved that any Ricci gradient expander with quadratic curvature decay has a unique asymptotic cone $(C(X),dr^2+r^2g_X)$, where $X$ is a smooth compact manifold and $g_X$ is a $C^{1,\alpha}$ metric for any $\alpha\in(0,1)$.

The main purpose of this article consists in understanding how the geometry of the link $(X,g_X)$ is reflected globally or asymptotically on the Ricci soliton. We describe first the asymptotics of Ricci expanders. We recall the definition of a gradient Ricci expander being asymptotically conical.
\begin{defn}
An expanding gradient Ricci soliton $(M,g,\nabla f)$ is asymptotically conical to $(C(X),g_{C(X)}:=dr^2+r^2g_X,r\partial r/2)$ if there exists a compact $K\subset M$, a positive radius $R$ and a diffeomorphism $\phi:M\setminus K\rightarrow C(X)\setminus B(o,R)$ such that
\begin{eqnarray}
&&\sup_{\partial B(o,r)}\arrowvert\nabla^k(\phi_*g-g_{C(X)})\arrowvert_{g_{C(X)}}=\textit{O}(f_{k}(r)),\quad \forall k\in \mathbb{N},\label{class-def-asy-con}\\
&&f(\phi^{-1}(r,x))=\frac{r^2}{4},\quad\forall (r,x)\in C(X)\setminus B(o,R), \label{cond-exp-1}
%&&\partial_r\phi^{-1}(r,x)=\sqrt{f}\frac{\nabla f}{\arrowvert\nabla f\arrowvert^2},\label{cond-exp-2}
\end{eqnarray}
where $f_k(r)=\textit{o}(1)$ as $r\rightarrow+\infty$.
\end{defn}
Condition (\ref{class-def-asy-con}) is one of the classical definitions of asymptotically conical metrics (e.g. in \cite{Sie-PHD} with Schauder norms instead) and condition (\ref{cond-exp-1}) taken from \cite{Cho-Egs} reflects the compatibility with the expanding structure.

We are mainly interested in the following situations : either the convergence is polynomial at rate $\tau$, i.e. $f_{k}(r)=r^{-\tau-k}$ for some positive $\tau$ and any nonnegative integer $k$, either the convergence is exponential at rate $\tau$, i.e. $f_k(r)=r^{-\tau+k}e^{-r^2/4}$ for any nonnegative integer $k$.

The generic case, i.e. without any other additional assumptions on the asymptotic cone, was implicitly considered in \cite{Che-Der} : we proved that the convergence was polynomial at rate $\tau=2$. This rate is sharp regarding the existing examples : the Bryant examples or the Ricci expanders built by Feldman-Ilmanen-Knopf \cite{Fel-Ilm-Kno} on the negative line bundle $(L^{-k},g_{k,p})_{k,p}$ with $k,p\in\mathbb{N}^*$ coming out of the cone $(C(\mathbb{S}^{2n-1}/\mathbb{Z}_k),i\partial\bar{\partial}(\arrowvert z\arrowvert^{2p}/p))$, where $\mathbb{Z}_k$ acts on $\mathbb{C}^n$ by rotations. 

If one imposes natural bounds on covariant derivatives of the curvature operator in a rescaled way, i.e.
$$\limsup_{x\rightarrow+\infty}r_p(x)^{2+i}\arrowvert\nabla^i\Rm(g)\arrowvert<+\infty,$$
for any nonnegative integer $i$, one gets $C^{\infty}$ convergence to the asymptotic cone : see \cite{Sie-PHD}. Actually, for both the convenience of the reader and the sake of coherence of the paper, we give a different proof under weaker assumptions : theorem \ref{asy-con-gen}. Now, if the curvature tensor of the link $X$ satisfies some elliptic constraints, we expect to get faster convergence to the asymptotic cone or at least a faster convergence of some tensor made out of the curvature tensor of the Ricci expander. We mainly consider two cases :\\

\begin{itemize}
\item If $(C(X^{n-1}),g_{C(X^{n-1})})$ is Ricci flat, we get sharp estimates of the Ricci curvature decay : $$\Ric(g)=\textit{O}(r^{2-n}e^{-r^2/4}),$$ which improves previous results by Siepmann \cite{Sie-PHD} : see theorem \ref{dec-Ricci-flat-cone}.\\
\item If $(X,g_X)$ is Einstein with constant scalar curvature different from $(n-1)(n-2)$, then $$T:=\Ric(g)-\frac{\R_g}{n-1}\left(g-\frac{\nabla f}{\arrowvert\nabla f\arrowvert}\otimes\frac{\nabla f}{\arrowvert\nabla f\arrowvert}\right)=\textit{O}(r^{-4}).$$
This tensor $T$ reflects the conical geometry at infinity and has already been proved useful in other contexts : \cite{Bre-Rot-3d}. This is proved in theorems \ref{dec-ric-c^3} and \ref{dec-ric-c^4}. Such a result holds in dimension $3$ without any assumptions on the link : see proposition \ref{est-sge-3d}.
\end{itemize}
 Finally, we also focus on how bounds on asymptotic covariant derivatives of the Ricci (resp. scalar) curvature imply corresponding bounds on asymptotic covariant derivatives of the full (resp. Ricci) curvature (resp. in dimension $3$) : see the proof of theorem \ref{asy-con-gen} and proposition \ref{est-sge-3d}.  
 
 We emphasize the fact that all these estimates are quantitative and are not so useful regarding compactness questions which is the second aspect of this article. Indeed, all the previous results do not control all the topology of the Ricci expander. The first result about (pre)compactness of Riemannian metrics goes back to Cheeger and Gromov. As we are mainly interested in the $C^{\infty}$ topology, we state Hamilton's compactness theorem with bounds on covariant derivatives of the curvature tensor and bounded diameter \cite{Ham-Com}  :
 \begin{theo}[Hamilton]\label{comp-ham-static}
 \begin{eqnarray}
 \cat{M}(n,D,v,(\Lambda_k)_{ k\geq0}):=&&\{(N^n,h)\quad \mbox{closed}\quad | \diam(h)\leq D\quad;\quad\vol(N,h)\geq v;\\
 &&\arrowvert\nabla^k\Rm(h)\arrowvert\leq\Lambda_k,\forall k\in\mathbb{N}\},
 \end{eqnarray}
 where $D$, $v$ and $(\Lambda_k)_{k\in\mathbb{N}}$ are positive real numbers, is compact in the $C^{\infty}$ topology.
 \end{theo}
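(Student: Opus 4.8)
The plan is to follow the Cheeger--Gromov--Hamilton strategy: first rule out collapsing by producing a uniform lower bound on the injectivity radius, then build uniform local coordinate charts in which the metric has controlled $C^{\infty}$ norms, and finally extract a subconvergent sequence by a diagonal Arzel\`a--Ascoli argument, checking that the limit again lies in the class. Recall that compactness here is to be read as sequential compactness for the $C^{\infty}$ Cheeger--Gromov topology together with closedness of the class under such limits.

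First I would establish a uniform lower bound $\inj(N,h)\ge i_0=i_0(n,D,v,\Lambda_0)>0$ for every $(N,h)\in\cat{M}(n,D,v,(\Lambda_k)_k)$. The two-sided curvature bound $\arrowvert\Rm(h)\arrowvert\le\Lambda_0$ together with $\diam(h)\le D$ and $\vol(N,h)\ge v$ is exactly the input of Cheeger's injectivity radius estimate (equivalently the Cheeger--Gromov--Taylor estimate): without a positive lower bound on the volume the sequence could collapse, for instance flat tori shrinking in one direction, so this non-collapsing step is genuinely necessary and is the first technical point.

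Next, fix $r_0:=\min(i_0,\pi/\sqrt{\Lambda_0})$. On each geodesic ball $B(p,r_0)$ the exponential map is a diffeomorphism, and I would pass to harmonic coordinates on a slightly smaller ball. The point of harmonic (rather than geodesic normal) coordinates is that the metric components solve an elliptic system whose coefficients are controlled by $\Rm$; iterating elliptic regularity and feeding in the bounds $\arrowvert\nabla^k\Rm(h)\arrowvert\le\Lambda_k$ yields $C^{k}$ bounds on the components $h_{ij}$ on a ball of definite radius, uniform over the whole class, for every $k$. Simultaneously, the curvature lower bound and Bishop--Gromov, combined with the volume and diameter bounds, give a uniform upper bound $N_0=N_0(n,D,v,\Lambda_0)$ on the number of $r_0/2$-balls needed to cover $N$, so every manifold in the class carries an atlas of at most $N_0$ uniformly controlled charts with uniformly controlled transition maps.

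Finally, given a sequence $(N_i,h_i)$ in the class I would extract a limit. After passing to a subsequence the combinatorial pattern of chart overlaps stabilises, and on each fixed chart Arzel\`a--Ascoli (applied to the uniformly $C^{\infty}$-bounded metric components and to the controlled transition functions) furnishes $C^{\infty}$-convergent subsequences; a diagonal argument over the finitely many charts and all derivative orders produces a smooth limiting atlas, hence a closed manifold $(N_\infty,h_\infty)$ together with diffeomorphisms $\Phi_i$ for which $\Phi_i^* h_i\to h_\infty$ in $C^{\infty}$. Since such convergence preserves the diameter bound, passes volume to the limit so that $\vol\ge v$, and transmits the pointwise bounds $\arrowvert\nabla^k\Rm\arrowvert\le\Lambda_k$, the limit again lies in $\cat{M}(n,D,v,(\Lambda_k)_k)$. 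The main obstacle is precisely this global gluing: upgrading the chartwise convergence to genuine diffeomorphisms onto a single limit manifold is the heart of the Cheeger--Gromov compactness machinery and requires controlling the transition maps and their derivatives uniformly while choosing a coherent system of charts on the limit.
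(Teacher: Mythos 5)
The paper itself gives no proof of this statement: it is quoted as background (Hamilton's compactness theorem, with a citation to \cite{Ham-Com}), so there is no internal argument to measure your proposal against. Your sketch is, however, a correct outline of the standard Cheeger--Gromov--Hamilton argument, and the logical skeleton is sound: Cheeger's lemma does convert the bounds $\diam(h)\leq D$, $\vol(N,h)\geq v$, $\arrowvert\Rm(h)\arrowvert\leq\Lambda_0$ into a uniform injectivity radius lower bound; a uniform harmonic radius with $C^{k}$ control of the metric components then follows from the bounds $\arrowvert\nabla^k\Rm(h)\arrowvert\leq\Lambda_k$ via elliptic regularity for the system $\Ric_{ij}=-\tfrac12\Delta h_{ij}+Q(h,\partial h)$; Bishop--Gromov bounds the number of charts; and the diagonal Arzel\`a--Ascoli extraction plus chart-gluing produces the limit, which visibly stays in the class. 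One point of comparison worth knowing: Hamilton's original proof in the cited reference does not use harmonic coordinates but works in geodesic normal coordinates, where the hypotheses on \emph{all} covariant derivatives of $\Rm$ directly yield bounds on all ordinary derivatives of the metric components; harmonic coordinates (in the style of Anderson/DeTurck--Kazdan) are the sharper tool when only $\arrowvert\Rm\arrowvert$ or $\arrowvert\Ric\arrowvert$ is bounded, whereas here either route works since derivative bounds of every order are assumed. Your sketch correctly identifies the genuine technical core --- upgrading chartwise convergence to global diffeomorphisms with controlled transition maps --- but, as you acknowledge, it does not carry that step out; for a complete proof this gluing argument is precisely what must be supplied, and it is the content of Hamilton's paper.
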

If one drops the diameter bound, a similar statement holds in the $C^{\infty}$ \textit{pointed} convergence for complete Riemannian manifolds : then, the lower volume bound concerns the volume of geodesic balls of a fixed radius, say $1$.
  The conditions on the covariant derivatives of the curvature tensor can be actually replaced by bounds on the Ricci curvature and are given for free when an elliptic or parabolic equation is assumed to hold : \cite{Ham-Com} in the the Ricci flow case and \cite{Has-Mul-Com} for Ricci shrinkers and the references therein. In our setting, as $(C(X),g_{C(X)})$ is the initial condition of a non linear heat equation, we expect heuristically at least that if $(X,g_X)$ belongs to a compact set of metrics $\cat{M}$ then so will be the set of expanders with asymptotic cone  $(C(X),g_{C(X)})_{(X,g_X)\in\cat{M}}$.
 
 We are able to prove compactness of Ricci expanders with non negative Ricci curvature. In this case, the potential function is a strictly convex function with quadratic growth, in particular, the topology is that of $\mathbb{R}^n$ and the level sets of the potential function are diffeomorphic to $\mathbb{S}^{n-1}$. More precisely, if $\crit(f)$ denotes the set of critical points of a function $f$,
 
\begin{theo}\label{Compactness-I}
The class
\begin{eqnarray*}
\cat{M}^0_{Exp}(n,\lambda_0,(\Lambda_k)_{k\geq 0})&:=&\{\mbox{$(M^n,g,\nabla f,p)$ normalized expanding gradient Ricci soliton s.t.} 
\\&&\Ric(g)\geq 0\quad;\quad\crit(f)=\{p\}\quad;\quad\sup_{M^n}\arrowvert \Rm(g)\arrowvert\leq\lambda_0\quad ;\\
&&\limsup_{x\rightarrow+\infty}r_p(x)^{2+k}\arrowvert\nabla^k\Rm(g)\arrowvert(x)\leq\Lambda_k,\quad \forall k\geq0 \}
\end{eqnarray*}
is compact in the pointed $C^{\infty}$ topology. \\

Moreover, let a sequence $(M_i,g_i,\nabla f_i,p_i)_i$ be in $\cat{M}^0_{Exp}(n,\lambda_0,(\Lambda_k)_{k\geq0})$. Then there exists a subsequence converging in the pointed $C^{\infty}$ topology to an expanding gradient Ricci soliton $(M_{\infty},g_{\infty},\nabla f_{\infty},p_{\infty})$ in $\cat{M}^0_{Exp}(n,\lambda_0,(\Lambda_k)_{k\geq 0})$ whose asymptotic cone $(C(X_{\infty}),g_{C(X_{\infty})},o_{\infty})$ is the limit in the Gromov-Hausdorff topology of the sequence of the asymptotic cones \\$(C(X_i),g_{C(X_i)},o_i)_i$ with $C^{\infty}$ convergence outside the apex. \\
\end{theo}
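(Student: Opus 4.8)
The plan is to reduce the statement to the pointed version of Hamilton's compactness theorem mentioned after Theorem \ref{comp-ham-static}, whose two hypotheses are uniform bounds on $|\nabla^k\Rm|$ over balls of a fixed radius about the basepoints and a uniform lower bound on the volume of those balls. Before doing so I would record the structural consequences of $\Ric(g)\ge 0$ already noted above: the soliton equation gives $\nabla^2 f=\Ric(g)+g/2\ge g/2$, so $f$ is strictly convex and proper with quadratic growth, $M\cong\mathbb{R}^n$, the level sets of $f$ are spheres $\mathbb{S}^{n-1}$, and (after the normalization defining the class) the contracted soliton identities yield $\R_g+|\nabla f|^2=f$ together with $\nabla f(p)=0$ and $0\le f(p)=\R_g(p)\le C(n)\lambda_0$. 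In particular $r_p$ and $2\sqrt f$ are comparable at infinity, so the hypothesis $\limsup_{x\to\infty}r_p^{2+k}|\nabla^k\Rm(g)|\le\Lambda_k$ is a genuinely uniform statement along each soliton.

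For the derivative bounds I would exploit the self-similar Ricci flow $g(\tau)=(1+\tau)\phi_\tau^*g$, $\tau\in(-1,\infty)$, for which $|\Rm(g(\tau))|_{g(\tau)}\le\lambda_0/(1+\tau)\le 2\lambda_0$ on $M\times[-\tfrac12,0]$. Shi's local derivative estimates on this time interval then give $\sup_M|\nabla^k\Rm(g)|\le C_k(n,\lambda_0)$ for every $k$, uniformly over the whole class; alternatively one can bootstrap directly on the elliptic soliton equation $\Delta_f\Rm=\Rm+\Rm\ast\Rm$, which near $p$ has small drift (since $\nabla f(p)=0$) and bounded coefficients, and combine the resulting interior estimates with the $\Lambda_k$ bounds at infinity. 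Either way the first ingredient is in hand on all of $M$, not merely near infinity.

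Noncollapsing is the more delicate ingredient. Since $\Ric(g)\ge 0$, Bishop--Gromov monotonicity gives $\vol(B(p,1))\ge\omega_n\AVR(g)$, so it suffices to bound $\AVR(g)$ below uniformly. Here I would use that each soliton is asymptotically conical, with constants controlled by $\lambda_0$ and the $\Lambda_k$ (by \cite{Che-Der} and Theorem \ref{asy-con-gen}), whence $\AVR(g)=\vol(X,g_X)/\vol(\mathbb{S}^{n-1})$; because the single--critical--point hypothesis forces the link to be $\mathbb{S}^{n-1}$, carrying a metric with $\Ric_{g_X}\ge(n-2)g_X$ and $|\Rm_{g_X}|\le C(n)\lambda_0$ inherited from the soliton, a uniform lower volume bound for the link (hence for $\AVR$) follows. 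This yields $\vol(B(p,1))\ge v(n,\lambda_0)>0$ and completes the hypotheses of Hamilton's theorem, producing a subsequence converging in pointed $C^\infty$ to an expanding soliton $(M_\infty,g_\infty,\nabla f_\infty,p_\infty)$ (the potentials converging since they solve $\nabla^2 f=\Ric(g)+g/2$ with $\nabla f(p)=0$).

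It remains to check that the limit lies in the class and that its asymptotic cone is the Gromov--Hausdorff limit of the cones $(C(X_i),g_{C(X_i)},o_i)$. The bounds $\Ric_\infty\ge 0$ and $\sup|\Rm_\infty|\le\lambda_0$ pass to the $C^\infty_{loc}$ limit, and strict convexity with $\nabla f_\infty(p_\infty)=0$ gives $\crit(f_\infty)=\{p_\infty\}$; the asymptotic bounds $\limsup r_{p_\infty}^{2+k}|\nabla^k\Rm_\infty|\le\Lambda_k$ transfer because they hold uniformly in $i$ at every large radius. For the cones, the links $(X_i,g_{X_i})$ lie in a compact family of closed metrics (Theorem \ref{comp-ham-static}, using the inherited uniform curvature--derivative, diameter and volume bounds), so a subsequence converges in $C^\infty$ to $(X_\infty,g_{X_\infty})$, giving Gromov--Hausdorff convergence of the cones with $C^\infty$ convergence away from the apex. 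The main obstacle, and the crux of the argument, is to identify this limit cone with the asymptotic cone of $(M_\infty,g_\infty)$: this is an interchange of the two limits $i\to\infty$ and $r\to\infty$, and it is precisely the uniformity in $i$ of the decay rate $f_k(r)$ furnished by $\lambda_0$ and the $\Lambda_k$ that allows one to pass the asymptotically conical estimate to the limiting soliton and conclude that $(C(X_\infty),g_{C(X_\infty)})$ is its asymptotic cone.
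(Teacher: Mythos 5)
Your proposal follows the same skeleton as the paper's proof---Shi-type derivative bounds (your parabolic route through the self-similar flow $g(\tau)=(1+\tau)\phi_\tau^*g$ is a legitimate substitute for the paper's elliptic lemma \ref{elliptic}), a lower volume bound feeding Hamilton's pointed compactness theorem, compactness of the links, and an interchange of the limits $i\to\infty$ and $r\to\infty$---but at the two points where the argument is genuinely hard you assert exactly what has to be proved. The first is the noncollapsing. You say that since the link is $\mathbb{S}^{n-1}$ with $\Ric(g_X)\ge(n-2)g_X$ and $\arrowvert\Rm(g_X)\arrowvert\le C(n)\lambda_0$, ``a uniform lower volume bound for the link follows.'' This does not follow from comparison geometry: positive Ricci (even sectional) pinching together with a two-sided curvature bound does not exclude collapse in general---the Aloff--Wallach spaces collapse with uniformly pinched positive sectional curvature---so an input beyond the curvature bounds is indispensable. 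The paper supplies it from the topology of the link: Gauss--Bonnet when $n-1$ is even, and the $\pi_2$-theorem of Petrunin--Tuschmann \cite{Pet-Tus-Pi_2} when $n-1$ is odd, where simple connectedness and finiteness of $\pi_2(\mathbb{S}^{n-1})$ are precisely the hypotheses ruling out such collapse. Without one of these, your lower bound on $\AVR(g)$---and with it the volume hypothesis of Hamilton's theorem, the bound $\arrowvert\min_{M^n}f\arrowvert+\arrowvert\mu(g)\arrowvert\le C(n,\lambda_0,\Lambda_0)$ from proposition \ref{pot-fct-est}, and the verification that the limit potential $f_\infty$ is still normalized (which your proposal omits)---is unsupported. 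Note also the order of dependencies: the bounds on $\min_{M^n}f$ and $\mu(g)$ require the $\AVR$ bound first, and the identity is $\arrowvert\nabla f\arrowvert^2+\R_g=f+\mu(g)$; the normalization of the class does not set $\mu(g)=0$.

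The second gap is the inversion of limits itself. You claim the bounds $\limsup_{x\to\infty}r_p^{2+k}\arrowvert\nabla^k\Rm\arrowvert\le\Lambda_k$ ``hold uniformly in $i$ at every large radius,'' and that the uniformity in $i$ of the conical decay rate is ``furnished by $\lambda_0$ and the $\Lambda_k$.'' Membership in the class gives, for each fixed $i$, only a bound on a limsup; it gives no control on the radius past which $r^{2+k}\arrowvert\nabla^k\Rm(g_i)\arrowvert$ drops below $\Lambda_k+\epsilon$, and that radius could a priori degenerate as $i\to\infty$. Converting the asymptotic bounds into global, quantitative, $i$-independent estimates is exactly the content of the paper's a priori estimates---proposition \ref{a-priori-reaction}, corollary \ref{a-prio-bd-curv-tensor} and proposition \ref{curv-op-max-ppe-infty}, proved by maximum-principle arguments---which yield $\sup_{M^n}v^{1+k/2}\arrowvert\nabla^k\Rm(g)\arrowvert\le C\left(n,k,\lambda_0,\Lambda_0,\dots,\Lambda_k\right)$ uniformly over the class. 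Plugged into the evolution equation (\ref{est-rate-con-exp}) this gives $\partial_t\nabla^{g_{X_i},k}(\bar{g_i})_t=\textit{O}(t^{-3})$ with constants independent of $i$, hence uniform quadratic-rate convergence of $(\bar{g_i})_t$ to $g_{X_i}$; that is what identifies the limit cone with $\left(C(X_\infty),dr^2+r^2g_{X_\infty}\right)$ and shows $\A_{g_\infty}^k(\Rm(g_\infty))\le\Lambda_k$, so the limit stays in the class. You correctly flag this interchange as the crux, but the mechanism that makes it work---the weighted maximum-principle machinery of sections \ref{sol-equ-sec} and \ref{comp-theo-sec}---is absent from your proposal, so as written the argument stops exactly where the real difficulty begins.
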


\begin{rk}
\begin{itemize}
\item Theorem \ref{Compactness-I} does not assume a uniform lower bound for the rescaled volume : this is actually a consequence of the proof of theorem \ref{Compactness-I}.
\item It would be interesting to allow different underlying topologies in theorem \ref{Compactness-I} as in the Feldman-Ilmanen-Knopf examples \cite{Fel-Ilm-Kno}.
\item We could also have assumed a bound on the Ricci curvature of the form $\Ric(g)\geq-\delta g$ with $\delta\in[0,1/2)$ giving again the same properties for the potential function, the statement of theorem \ref{Compactness-I} should be modified slightly then and would have been less concise. 
\end{itemize}
\end{rk}

We state an equivalent version of theorem \ref{Compactness-I} in terms of initial conditions, i.e. in terms of asymptotic cones lying in a compact set that is more in the spirit of compactness results in the setting of solutions to (non linear) heat equations :
\begin{theo}\label{Compactness-I-bis}
The class
\begin{eqnarray*}
&&\cat{M}^1_{Exp}(n,\lambda_0,(\Lambda_k)_{k\geq 0}):=\{\mbox{$(M^n,g,\nabla f,p)$ normalized expanding gradient Ricci soliton} \\&&\mbox{smoothly converging to its asymptotic cone $(C(X),dr^2+r^2g_X,o)$ s.t.} \quad
\Ric(g)\geq 0\quad;\\&&\crit(f)=\{p\}\quad;\quad\sup_{M^n}\arrowvert \Rm(g)\arrowvert\leq\lambda_0\quad ;\quad(X,g_X)\in\cat{M}(n-1,\pi,c(n,\Lambda_0),(\Lambda_k)_{k\geq 0}) \}
\end{eqnarray*}
satisfies the same conclusion of theorem \ref{Compactness-I}. 
\end{theo}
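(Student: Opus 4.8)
The plan is to deduce Theorem \ref{Compactness-I-bis} from Theorem \ref{Compactness-I} by setting up a dictionary between the two descriptions of the curvature decay: on one side the ``initial-condition'' hypothesis that the link $(X,g_X)$ lies in a fixed Hamilton class together with smooth convergence to the cone, and on the other side the ``soliton-side'' rescaled bounds $\limsup_{x\to\infty} r_p(x)^{2+k}|\nabla^k\Rm(g)|(x)\le\Lambda_k$. Concretely I would establish the inclusion
$$\cat{M}^1_{Exp}(n,\lambda_0,(\Lambda_k)_{k\geq0})\subseteq \cat{M}^0_{Exp}(n,\lambda_0,(\Lambda'_k)_{k\geq0}),$$
for constants $\Lambda'_k$ depending only on $n$ and $(\Lambda_j)_{j\le k}$, so that Theorem \ref{Compactness-I} applies to any sequence drawn from $\cat{M}^1_{Exp}$; it then remains to check that the limit produced by Theorem \ref{Compactness-I} lies back in $\cat{M}^1_{Exp}(n,\lambda_0,(\Lambda_k)_{k\geq0})$ with the \emph{original} constants for the link.

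The core step is this inclusion. Fix $(M,g,\nabla f,p)\in\cat{M}^1_{Exp}$, with diffeomorphism $\phi$ onto the end of $C(X)$ and $f=r^2/4$ there by (\ref{cond-exp-1}); in particular $r_p\sim r$ at infinity. Since the cone $(C(X),g_{C(X)})$ is self-similar, I would rescale the annulus $\{r\le\rho\le 2r\}$ by the factor $r^{-2}$ and pull back by the dilation $\mu_r$, producing on the fixed annulus $\{1\le\rho\le2\}$ a metric $g_r:=r^{-2}\mu_r^*(\phi_*g)$. Because $r^{-2}\mu_r^*g_{C(X)}=g_{C(X)}$, the smooth convergence (\ref{class-def-asy-con}) translates verbatim into $g_r\to g_{C(X)}$ in $C^\infty$ on $\{1\le\rho\le2\}$, whence $\nabla^k\Rm(g_r)\to\nabla^k\Rm(g_{C(X)})$ there. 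Using the standard cone curvature formulas, the quantity $|\nabla^k\Rm(g_{C(X)})|_{g_{C(X)}}$ at $\rho=1$ is bounded by a constant $C(n,(\Lambda_j)_{j\le k})$ depending only on $n$ and on the bounds $|\nabla^j\Rm(g_X)|\le\Lambda_j$ for $j\le k$ (the constant-curvature contribution being controlled as well). Finally the conformal scaling identity $|\nabla^k\Rm(g_r)|_{g_r}(1)=r^{2+k}|\nabla^k\Rm(g)|_g$ evaluated at $\phi^{-1}(r,\cdot)$ yields $\limsup_{x\to\infty}r_p(x)^{2+k}|\nabla^k\Rm(g)|(x)\le \Lambda'_k:=C(n,(\Lambda_j)_{j\le k})$, uniformly over the whole class since the constant only sees the fixed link bounds.

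With the inclusion in hand, given a sequence $(M_i,g_i,\nabla f_i,p_i)$ in $\cat{M}^1_{Exp}(n,\lambda_0,(\Lambda_k))$ I would regard it as a sequence in $\cat{M}^0_{Exp}(n,\lambda_0,(\Lambda'_k))$ and apply Theorem \ref{Compactness-I} to extract a subsequence converging in the pointed $C^\infty$ topology to some $(M_\infty,g_\infty,\nabla f_\infty,p_\infty)\in\cat{M}^0_{Exp}(n,\lambda_0,(\Lambda'_k))$, with asymptotic cones $C(X_i)\to C(X_\infty)$ in the Gromov--Hausdorff sense and $C^\infty$ off the apex. It then remains to verify that $(M_\infty,\dots)\in\cat{M}^1_{Exp}(n,\lambda_0,(\Lambda_k))$. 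First, membership of $M_\infty$ in $\cat{M}^0_{Exp}$ forces, by Theorem \ref{asy-con-gen}, that $M_\infty$ converges smoothly to its asymptotic cone. Second, the $C^\infty$ convergence of the cones off the apex restricts to $C^\infty$ convergence of the cross-sections $X_i\to X_\infty$; since each $(X_i,g_{X_i})$ lies in $\cat{M}(n-1,\pi,c(n,\Lambda_0),(\Lambda_k))$, which is compact, hence closed under $C^\infty$ convergence, by Theorem \ref{comp-ham-static}, the diameter, volume and curvature-derivative bounds all pass to the limit and $(X_\infty,g_{X_\infty})\in\cat{M}(n-1,\pi,c(n,\Lambda_0),(\Lambda_k))$ with the same constants. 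Thus $M_\infty\in\cat{M}^1_{Exp}(n,\lambda_0,(\Lambda_k))$, which is precisely the asserted compactness together with the cone convergence statement.

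The main obstacle is the second step above: turning the qualitative ``smooth convergence to the cone'' of the definition into quantitative, \emph{uniform} rescaled bounds $\Lambda'_k$. The two delicate points are that the rescaling must be compatible with the soliton normalization $f=r^2/4$, so that the dilations $\mu_r$ are the geometrically correct ones and $r_p\sim r$ holds, and that the constants emerging from the cone curvature formulas depend only on the fixed link bounds $(\Lambda_j)_{j\le k}$ and not on the individual soliton. This uniformity is exactly what allows Theorem \ref{Compactness-I} to be applied to the entire sequence simultaneously.
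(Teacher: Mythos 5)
Your proposal is correct and follows essentially the same route as the paper: the paper's proof (given as a remark following the statement) reduces Theorem \ref{Compactness-I-bis} to Theorem \ref{Compactness-I} by observing that smooth convergence to the cone allows the invariants $\limsup_{x\rightarrow+\infty}r_p(x)^{2+k}\arrowvert\nabla^k\Rm(g)\arrowvert$ to be bounded by the covariant derivatives of the curvature of the link (via the Gauss formula), which is precisely your inclusion $\cat{M}^1_{Exp}(n,\lambda_0,(\Lambda_k))\subseteq\cat{M}^0_{Exp}(n,\lambda_0,(\Lambda'_k))$, obtained in your case by the equivalent dilation/self-similarity argument. Your additional verification that the limit lands back in $\cat{M}^1_{Exp}$ with the original link constants is a detail the paper leaves implicit, but it is consistent with its argument.
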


\begin{rk}
\begin{itemize}
\item The proof of theorem \ref{Compactness-I-bis} is exactly the same as the one of theorem \ref{Compactness-I}. Indeed, since the convergence to the cone is smooth, the invariants $$\limsup_{x\rightarrow+\infty}r_p(x)^{2+k}\arrowvert\nabla^k\Rm(g)\arrowvert$$ can be bounded from above by the supremum of the covariant derivatives of the curvature of the metric of the section of the asymptotic cone up to order $k$ by  Gauss formula.
\item The reason why the lower bound on the volume in the statement of theorem \ref{Compactness-I-bis} depends on $n$ and $\Lambda_0$ is given by the proof of theorem \ref{Compactness-I}. Actually, the nonnegativity of the Ricci curvature implies that $X$ is diffeomorphic to a codimension one sphere endowed with a metric $g_X$ with a positive lower bound on the Ricci curvature depending only on the dimension and a curvature bound $\Lambda_0$ : one can then invoke a non-collapsing theorem due to Petrunin and Tuschmann \cite{Pet-Tus-Pi_2} to bound from below the volume of $g_X$.
\end{itemize}
\end{rk}

An additional difficulty in proving theorems \ref{Compactness-I} and \ref{Compactness-I-bis} consists in inverting limits, i.e. formally speaking,
\begin{eqnarray}
\lim_{t\rightarrow+\infty}\lim_{i\rightarrow+\infty}(M_i,t^{-2}g_i,p_i)=\lim_{i\rightarrow+\infty}\lim_{t\rightarrow+\infty}(M_i,t^{-2}g_i,p_i),
\end{eqnarray}
where $(M_i,g_i,p_i)_i$ is a sequence of expanding gradient Ricci solitons as in theorem \ref{Compactness-I}. The main step for proving this inversion is to derive a priori estimates of differential inequalities with quadratic non linearities. Indeed, the norm of the curvature tensor $\arrowvert\Rm(g)\arrowvert=:u$ satisfies $$\Delta_fu\geq-u-c(n)u^2,$$
as soon as $\arrowvert\Rm(g)\arrowvert$ does not vanish : see corollary \ref{a-prio-bd-curv-tensor}. 

The only tool used massively all along the proofs of asymptotic and a priori global estimates is the maximum principle for functions, this contrasts with the shrinking case where integral estimates appear to be more useful : \cite{Has-Mul-Com}. Finally, on the one hand, Theorem \ref{Compactness-I} can be interpreted as the extension of Hamilton's  compactness theorem for non negatively curved expanding gradient Ricci solitons with conical initial condition, on the other hand, the motivation of theorem \ref{Compactness-I} comes equally from an analogous statement (but which is much harder to prove) in the setting of conformally compact Einstein manifolds where the notion of asymptotic cone $(C(X),g_{C(X)})$ is replaced by the one of conformal infinity $(X,[g_X])$ : see \cite{And-Con-Inf} for details.\\

In the course of the proof of the a priori estimates, we are able to partially answer questions $(9)$ and $(11)$ asked in [Section $7.2$, Chap. $9$,\cite{Ben}].\\

Concerning a lower bound on the rescaled scalar curvature, we get :
\begin{prop}\label{inf-bound-scal}
Let $(M,g,\nabla f)$ be a normalized expanding gradient Ricci soliton with non negative Ricci curvature. Then,
$$\left(\sqrt{\mu(g)}+\frac{r_p(x)}{2}\right)^2\R_g(x)\geq \liminf_{y\rightarrow+\infty}\frac{r_p(y)^2}{4}\R_g(y),\quad \forall x\in M,$$
where $p$ is the unique critical point of the potential function $f$ and $\mu(g)$ is the entropy.
\end{prop}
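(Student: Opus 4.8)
The plan is to reduce the claimed pointwise lower bound to a single minimum principle for a scalar quantity built from $\R_g$ and the potential. The starting point is the standard package of identities coming from $\nabla^2 f=\Ric(g)+g/2$: tracing gives $\Delta f=\R_g+n/2$; the contracted second Bianchi identity combined with the soliton equation gives $\Ric(g)(\nabla f,\cdot)=-\tfrac12\nabla\R_g$; and $\R_g+|\nabla f|^2-f\equiv C$ is conserved. Using the normalization built into the definition of a normalized soliton I set $F:=f+C+\tfrac n2$, so that $|\nabla f|^2=F-\tfrac n2-\R_g$ and, crucially, $\Delta_f F=F$, where $\Delta_f:=\Delta+\langle\nabla f,\nabla\cdot\rangle$ is the drift Laplacian adapted to the soliton. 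A Bochner computation (equivalently the known soliton formula) then gives $\Delta_f\R_g=-\R_g-2|\Ric(g)|^2$.

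The key observation is that $Q:=F\,\R_g$ is $f$-superharmonic. Indeed, by the product rule for $\Delta_f$, together with $\Delta_f F=F$, $\Delta_f\R_g=-\R_g-2|\Ric(g)|^2$ and $2\langle\nabla F,\nabla\R_g\rangle=2\langle\nabla f,\nabla\R_g\rangle=-4\Ric(g)(\nabla f,\nabla f)$, the $\R_g$-terms cancel and one is left with
$$\Delta_f Q=-2F|\Ric(g)|^2-4\Ric(g)(\nabla f,\nabla f)\le 0,$$
the sign coming precisely from $\Ric(g)\ge 0$. Note that $\Ric(g)\ge 0$ also forces $F=f+C+n/2\ge\R_g(p)+n/2>0$, since $\nabla^2 f\ge g/2$ makes $f$ strictly convex with minimum at the unique critical point $p$.

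I would then run the minimum principle on $Q$. If $\R_g$ vanishes at one point it vanishes identically (strong minimum principle for the superharmonic $\R_g$) and the statement is trivial, so assume $\R_g>0$, hence $Q>0$. At an interior minimum of $Q$ one would have $\Delta_f Q\ge 0$, forcing $\Ric(g)=0$ and thus $\R_g=0$ there, a contradiction; so $Q$ has no interior minimum and any minimizing sequence must escape to infinity, giving $\inf_M Q=\liminf_{x\to\infty}Q(x)$. To identify this value, note that $|\nabla f|^2\le F$ yields $|\nabla\sqrt F|\le\tfrac12$, so $\sqrt F$ is $\tfrac12$-Lipschitz; since $\nabla f(p)=0$ and $F(p)=\mu(g)$,
$$\sqrt{F(x)}\le\sqrt{\mu(g)}+\frac{r_p(x)}2,\qquad F(x)\le\Big(\sqrt{\mu(g)}+\frac{r_p(x)}2\Big)^2 .$$
Conversely $\nabla^2 f\ge g/2$ gives $F\ge\mu(g)+r_p^2/4$, so $F/(r_p^2/4)\to1$ and $\liminf_{x\to\infty}Q=\liminf_{x\to\infty}\tfrac{r_p^2}{4}\R_g$. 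Combining, for every $x$,
$$\Big(\sqrt{\mu(g)}+\frac{r_p(x)}2\Big)^2\R_g(x)\ge F(x)\R_g(x)=Q(x)\ge\inf_M Q=\liminf_{y\to\infty}\frac{r_p(y)^2}{4}\R_g(y),$$
which is the claim.

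The main obstacle is the minimum principle at infinity: one must guarantee that the infimum of the superharmonic $Q$ is approached along a sequence tending to infinity and not at an interior point. This is exactly where $\Ric(g)\ge 0$ is used twice — once to obtain $\Delta_f Q\le 0$ and once, through the strong minimum principle for $\R_g$, to rule out interior minima — and where the precise additive normalization encoded in $\mu(g)=F(p)$ must be matched so that the $\tfrac12$-Lipschitz bound on $\sqrt F$ reproduces exactly the weight $\big(\sqrt{\mu(g)}+r_p/2\big)^2$.
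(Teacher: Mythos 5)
Your strategy is the same as the paper's: apply a minimum principle to $Q=v\R_g$, where $v=f+\mu(g)+n/2$ (your $F$), using $\Delta_f v=v$ and $\Delta_f\R_g=-\R_g-2|\Ric(g)|^2$. Your superharmonicity computation is correct; the only structural difference is that you give the cross term $2\langle\nabla v,\nabla\R_g\rangle=-4\Ric(g)(\nabla f,\nabla f)$ a sign via $\Ric(g)\ge0$, whereas the paper rewrites it as $2\langle\nabla(v\R_g),\nabla\ln v\rangle-2(v\R_g)|\nabla\ln v|^2$ and absorbs the first piece into a modified drift, so that only $\R_g\ge0$ is needed (this is why the paper's subsequent remark asserts nonnegative scalar curvature suffices). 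Your handling of the minimum principle itself (strong minimum principle if $\R_g$ vanishes somewhere, properness of $f$ from strict convexity to rule out interior minima) is fine.

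The gap is in the last step, where you assert $F(p)=\mu(g)$. Under the paper's definitions this is false, and it even contradicts what you use earlier in your own argument: evaluating $|\nabla f|^2+\R_g=f+\mu(g)$ at the critical point gives $\mu(g)=\R_g(p)-f(p)$, hence $F(p)=f(p)+\mu(g)+n/2=\R_g(p)+n/2$, which is exactly the value you invoke when proving $F>0$. The two quantities agree only if $f(p)=-n/2$, and the normalization $\int_M e^{-f}d\mu_g=(4\pi)^{n/2}$ does not provide this: proposition \ref{pot-fct-est} yields only $f(p)\le0$, i.e. $\mu(g)\ge\R_g(p)$, and for the Gaussian soliton one has $f(p)=0$, so $F(p)=n/2>0=\mu(g)$. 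Consequently the $\tfrac12$-Lipschitz bound on $\sqrt F$ gives $F(x)\le\bigl(\sqrt{\R_g(p)+n/2}+r_p(x)/2\bigr)^2$, and the first inequality of your final chain, $F(x)\le\bigl(\sqrt{\mu(g)}+r_p(x)/2\bigr)^2$, is unjustified — indeed false near $p$ whenever $\R_g(p)+n/2>\mu(g)$. What your argument actually proves is the estimate with $\sqrt{\mu(g)+n/2}$ (equivalently $\sqrt{\R_g(p)+n/2}$) in place of $\sqrt{\mu(g)}$; bridging the difference would require $f(p)\le-n/2$, which is not available. In fairness, the paper's own two-line proof is silent on precisely this conversion, and the weakened constant is all that its maximum principle delivers too; it also suffices for the qualitative consequence stated after the proposition (a definite positive lower bound on the rescaled scalar curvature when $\R_{g_X}>(n-1)(n-2)$). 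But as written, your identification $F(p)=\mu(g)$ is a concrete false step, so the statement with the exact weight $\bigl(\sqrt{\mu(g)}+r_p(x)/2\bigr)^2$ is not established by your argument.
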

The definition of the entropy of a Ricci expander can be found in lemma \ref{id-EGS}.

Note that if the convergence to the asymptotic cone $(C(X),g_{C(X)})$ of such a Ricci expander is at least $C^2$ and if the scalar curvature $\R_{g_X}$ is positive then according to proposition \ref{inf-bound-scal}, the rescaled scalar curvature has a definite positive lower bound.\\

Finally, recall that a Riemannian manifold $(M^n,g)$ is \textit{Ricci pinched} if there exists some $\epsilon\in(0,1]$ such that $$\Ric(g)\geq \frac{\epsilon}{n}\R_gg.$$
We are able to prove the following :
\begin{prop}\label{ric-pin}
Let $(M^n,g,\nabla f)$ be an expanding gradient Ricci soliton with non negative scalar curvature which is Ricci pinched. Then,
\begin{enumerate}
\item $(M^n,g,\nabla f)$ is asymptotically conical to a Ricci flat metric cone $\left(C(\mathbb{S}^{n-1}),g_{C(\mathbb{S}^{n-1})},r\partial_r/2\right)$ at exponential rate.\\ 
\item If $\dim M=4$, $(M^4,g,\nabla f)$ is isometric to the Gaussian soliton $(\mathbb{R}^4,\eucl,\arrowvert\cdot\arrowvert^2/4)$.\\
\item If $g$ is sufficiently pinched, i.e. if $\epsilon\in[\epsilon(n),1]$ where $\lim_{n\rightarrow+\infty}\epsilon(n)=1-2^{-1/2}$, then $(M^n,g,\nabla f)$ is isometric to the Gaussian soliton.
\end{enumerate}
\end{prop}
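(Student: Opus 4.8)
The plan is to treat the three assertions in turn, extracting $(1)$ from the scale invariance of the pinching, deducing $(2)$ from a dimension-three rigidity plus a volume comparison, and proving $(3)$ by a direct maximum principle. First note that the hypotheses combine to give $\Ric(g)\geq\frac{\epsilon}{n}\R_g g\geq 0$, so throughout I am in the nonnegatively curved regime, where the results recalled above (in particular \cite{Che-Der}) furnish a unique asymptotic cone $(C(X),dr^2+r^2g_X)$ with $X$ diffeomorphic to $\mathbb{S}^{n-1}$ and $g_X$ smooth. The key observation for $(1)$ is that the condition $\Ric(g)\geq\frac{\epsilon}{n}\R_g g$ is invariant under the rescaling $g\mapsto t^{-2}g$, hence it passes to the blow-down and the asymptotic cone is itself Ricci pinched. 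On a metric cone the radial Ricci curvature vanishes, $\Ric_{C(X)}(\partial_r,\partial_r)=0$, so evaluating the pinching on $\partial_r$ yields $0\geq\frac{\epsilon}{n}\R_{C(X)}$; since $\epsilon>0$ and $\R_{C(X)}\geq 0$ by continuity, this forces $\R_{C(X)}=0$ and therefore $\Ric_{C(X)}=0$. The asymptotic cone is thus Ricci flat, and Theorem \ref{dec-Ricci-flat-cone} applies to give the decay $\Ric(g)=\textit{O}(r^{2-n}e^{-r^2/4})$ together with the upgraded exponential convergence to the cone, which is exactly assertion $(1)$.

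For $(2)$, when $n=4$ the link $X$ is three dimensional, and Ricci flatness of the cone forces $g_X$ to be Einstein with $\Ric_{g_X}=(n-2)g_X=2g_X$. In dimension three an Einstein metric has constant sectional curvature, and since $X$ is diffeomorphic to $\mathbb{S}^3$, the pair $(X,g_X)$ is the round sphere. Consequently the asymptotic cone is the flat $\mathbb{R}^4$, so the asymptotic volume ratio $\AVR(g)$ is maximal, equal to the Euclidean one. Because $\Ric(g)\geq 0$, the rigidity case of the Bishop--Gromov volume comparison then forces $(M^4,g)$ to be isometric to flat $\mathbb{R}^4$, and a flat expander is the Gaussian soliton.

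For $(3)$ I would argue directly. The scalar curvature satisfies $\Delta_f\R_g=-\R_g-2|\Ric(g)|^2\leq 0$, so $\R_g$ is $\Delta_f$-superharmonic and nonnegative; by the strong minimum principle either $\R_g\equiv 0$, in which case $\Ric(g)\equiv 0$, $\nabla^2 f=g/2$ and $(M,g)$ is flat, hence Gaussian, or $\R_g>0$ everywhere. In the latter case the scale invariant quantity $Q:=|\Ric(g)|^2/\R_g^2$ is globally defined and smooth, and the pinching gives the two-sided bound $1/n\leq Q\leq(n-(n-1)\epsilon)/n$, with $Q=1/n$ exactly at Einstein points. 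Computing $\Delta_f Q$ from the Bochner identities for $|\Ric(g)|^2$ and $\R_g^2$ produces a reaction term quadratic in the curvature; using the algebraic consequences of $\Ric(g)\geq\frac{\epsilon}{n}\R_g g$ to control the full-curvature contributions, this reaction term becomes sign-definite precisely when $\epsilon\geq\epsilon(n)$. Since the Bakry--Émery Ricci tensor associated with $\Delta_f$ is bounded below, owing to the soliton equation and $\Ric(g)\geq 0$, the weighted Omori--Yau maximum principle applies to the bounded function $Q$, and the sign of the reaction term along a maximizing sequence forces $Q\equiv 1/n$, i.e. $g$ is Einstein. An Einstein expander with $\Ric(g)\geq 0$ cannot have positive Einstein constant, as Bonnet--Myers would contradict the $\mathbb{R}^n$ topology, so it is Ricci flat and therefore the Gaussian soliton.

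The hard part will be the curvature reaction estimate in $(3)$: one must bound the full curvature tensor entering $\Delta_f|\Ric(g)|^2$ purely in terms of $\R_g$ and the pinching constant and then isolate the optimal threshold, which is the source of the explicit $\epsilon(n)$ with $\lim_{n\to+\infty}\epsilon(n)=1-2^{-1/2}$. A secondary technical point, which I expect to be comparatively routine here, is the justification of the maximum principle on the noncompact soliton, for which the lower bound on the Bakry--Émery Ricci tensor and the decay from $(1)$ are exactly what is needed.
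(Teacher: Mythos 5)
Your part (2) is correct and coincides with the paper's argument, but parts (1) and (3) both contain genuine gaps. In (1), your route --- pass the scale-invariant pinching to the blow-down, note that the radial Ricci curvature of a metric cone vanishes, conclude the cone is Ricci flat, then invoke Theorem \ref{dec-Ricci-flat-cone} --- presupposes that the soliton converges to a \emph{smooth} asymptotic cone in at least the $C^2$ sense, so that Ricci curvatures pass to the limit and $\lim_{+\infty}r_p^2\Ric(g)=0$ (the actual hypothesis of Theorem \ref{dec-Ricci-flat-cone}) can be read off. But the proposition assumes no curvature bound whatsoever: \cite{Che-Der} requires quadratic curvature decay and even then only produces a $C^{1,\alpha}$ cone metric, Theorem \ref{asy-con-gen} requires finiteness of all the invariants $\A_g^k(\Ric(g))$, and Ricci curvature does not pass to Gromov--Hausdorff blow-down limits. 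So the premise of your argument is exactly what has to be proved. The paper avoids the cone entirely at this stage: by the soliton identity (\ref{equ:2}), along the Morse flow of $f$ one has $\partial_t\R_g=-2\Ric(g)(\mathbf{n},\mathbf{n})\leq-\frac{2\epsilon}{n}\R_g$, so $\R_g$ --- and hence $\Ric(g)$, since $0\leq\Ric(g)\leq\R_g\, g$ --- decays like $e^{-2\epsilon f/n}$. This yields $\lim_{+\infty}r_p^2\Ric(g)=0$ with no a priori asymptotics, and only then is Theorem \ref{dec-Ricci-flat-cone} applied (that theorem produces the required curvature bounds internally).

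In (3), the step you flag as ``the hard part'' is not a technicality but an obstruction: $\Delta_f\arrowvert\Ric(g)\arrowvert^2$ contains the reaction term $\Rm(g)\ast\Ric(g)\ast\Ric(g)$ coming from (\ref{equ:6}), and in dimension $\geq4$ the full curvature tensor is not controlled by the Ricci tensor, so no algebraic consequence of $\Ric(g)\geq\frac{\epsilon}{n}\R_g g$ can make that term sign-definite. Your quantity $Q=\arrowvert\Ric(g)\arrowvert^2/\R_g^2$ therefore has no usable differential inequality, and no explicit threshold $\epsilon(n)$ emerges from this route. The paper sidesteps the full curvature by working with the scalar curvature alone: the pinching gives the pointwise bound $\arrowvert\Ric(g)\arrowvert^2\leq\left(1-2c(n)\epsilon+c(n)\epsilon^2\right)\R_g^2$ with $c(n)=1-1/n$, hence $2\arrowvert\Ric(g)\arrowvert^2\leq\R_g^2$ precisely when $\epsilon\geq\epsilon(n):=1-\sqrt{1-\frac{n}{2(n-1)}}$ (which tends to $1-2^{-1/2}$). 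Plugged into (\ref{evol-exp-scal}), this makes $e^v\R_g$ a subsolution of $\Delta_{-v}u\geq\left(\frac{n}{2}-1\right)u$; since $e^v\R_g\to0$ at infinity by the decay of part (1), the maximum principle forces $\R_g\equiv0$, then $\Ric(g)\equiv0$ by (\ref{equ:7}), and the conclusion follows from \cite{Pet-Wyl-Rig}. In particular your final step (Einstein plus Myers) is never needed: once $\R_g\equiv0$, rigidity is immediate.
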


Note that we don't assume a bound on the full curvature tensor : we derive such bounds by the sole assumption of being Ricci pinched.\\

Previous works on this question were done in the following cases :\\
\begin{itemize}
\item If $(M,g)$ has a sufficiently pinched curvature tensor (implying the nonnegativity of the sectional curvature) : \cite{Che-Zhu-Pin}
\item If $(M,g)\times \mathbb{R}^2$ has pinched isotropic curvature (implying the nonnegativity of the sectional curvature) : \cite{Bre-Sch-Cur-Sph} extending \cite{Che-Zhu-Pin}, proving the non existence of non flat non compact  such pinched metrics without any expanding structure. 
\item If $\dim M=3$ : \cite{Ma-Ric-Pin-3d} by essentially using the same argument of the proof of proposition \ref{ric-pin} in dimension $4$.
\end{itemize}

We give a brief outline of the organization of the paper. \\

In section \ref{sol-equ-sec}, we first recall and (re)-prove curvature identities on expanding gradient Ricci solitons, then we focus on precise estimates of the potential function (proposition \ref{pot-fct-est}). Secondly, we (re)-prove local estimates in the spirit of Shi both for the curvature tensor (lemma \ref{elliptic}) and general solutions of weighted elliptic equations (lemma \ref{loc-est-tensor-egs}). Finally, we prove a priori estimates for sub solutions of weighted elliptic equations : lemma \ref{lemm-cruc}, though abstract, is at the core of most results of this paper. \\

Section \ref{asy-est-sec} is devoted to the proof of asymptotic estimates on conical expanding gradient Ricci solitons : we treat separately the generic (theorem \ref{asy-con-gen}) and the asymptotically Ricci flat case (theorem \ref{dec-Ricci-flat-cone}). Then we focus on Einstein constraints at infinity : proposition \ref{est-sge-3d} in dimension $3$ and theorems \ref{dec-ric-c^3} and \ref{dec-ric-c^4} for higher dimensions, then we end this section by proving proposition \ref{ric-pin}. \\

Finally, in section \ref{comp-theo-sec}, we first prove general global a priori estimates : propositions \ref{a-priori-reaction} and \ref{curv-op-max-ppe-infty}, corollaries \ref{a-prio-bd-curv-tensor} and \ref{scal-max-ppe-infty}. This leads us directly to the proof of two compactness theorems : theorems \ref{Compactness-I} and \ref{Compactness-II}.\\

\textbf{Acknowledgements.}
 I would like to thank Panagiotis  Gianniotis, Felix Schulze and Peter Topping for all these fruitful conversations we had on Ricci expanders.\\

The author is supported by the EPSRC on a Programme Grant entitled ‘Singularities of Geometric Partial Differential Equations’ (reference number EP/K00865X/1).

\section{Soliton equations and rough estimates}\label{sol-equ-sec}
\subsection{Algebraic identities on Ricci expanders}
We start with a couple of definitions.\\

\begin{itemize}
\item Let $(M,g)$ be a Riemannian manifold and let $w:M\rightarrow\mathbb{R}$ be a smooth function on $M$. The \textit{weighted laplacian} (with respect to $w$) is 
$\Delta_{w}T:=\Delta T+\nabla_{\nabla w}T$,
where $T$ is a tensor on $M$.

\item An expanding gradient Ricci soliton is said \textit{normalized} if $\int_Me^{-f}d\mu_g=(4\pi)^{n/2}$ (whenever it makes sense).
\end{itemize}
The next lemma gathers well-known Ricci soliton identities together with the (static) evolution equations satisfied by the curvature tensor.
\begin{lemma}\label{id-EGS}
Let $(M^n,g,\nabla f)$ be a normalized expanding gradient Ricci soliton. Then the trace and first order soliton identities are :
\begin{eqnarray}
&&\Delta f = \R_g+\frac{n}{2}, \label{equ:1} \\
&&\nabla \R_g+ 2\Ric(g)(\nabla f)=0, \label{equ:2} \\
&&\arrowvert \nabla f \arrowvert^2+\R_g=f+\mu(g), \label{equ:3}\\
&&\div\Rm(g)(Y,Z,T)=\Rm(g)(Y,Z, \nabla f,T),\label{equ:4}
\end{eqnarray}
for any vector fields $Y$, $Z$, $T$ and where $\mu(g)$ is a constant called the entropy.\\

The evolution equations for the curvature operator, the Ricci tensor and the scalar curvature are :
\begin{eqnarray}
&& \Delta_f \Rm(g)+\Rm(g)+\Rm(g)\ast\Rm(g)=0,\label{equ:5}\\
&&\Delta_f\Ric(g)+\Ric(g)+2\Rm(g)\ast\Ric(g)=0,\label{equ:6}\\
&&\Delta_f\R_g+\R_g+2\arrowvert\Ric(g)\arrowvert^2=0,\label{equ:7}
\end{eqnarray}
where, if $A$ and $B$ are two tensors, $A\ast B$ denotes any linear combination of contractions of the tensorial product of $A$ and $B$.
\end{lemma}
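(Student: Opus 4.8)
The plan is to establish the seven identities in two groups: the first-order soliton identities \eqref{equ:1}--\eqref{equ:4}, which follow from the defining equation $\nabla^2 f=\Ric(g)+g/2$ by tracing, differentiating, and contracting the Bianchi identities; and the second-order \emph{static} evolution equations \eqref{equ:5}--\eqref{equ:7}, which encode that the soliton generates a self-similar Ricci flow.

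I would begin with \eqref{equ:1}, immediate upon taking the metric trace of the soliton equation since $\tr_g g=n$ and $\tr_g\Ric(g)=\R_g$. For \eqref{equ:2} I would take the divergence of the soliton equation: commuting covariant derivatives in $\div(\nabla^2 f)$ produces $\nabla\Delta f+\Ric(g)(\nabla f)$ via the Ricci identity, while $\div(\Ric(g)+g/2)=\tfrac12\nabla\R_g$ by the contracted second Bianchi identity; substituting \eqref{equ:1} then yields $\nabla\R_g+2\Ric(g)(\nabla f)=0$. Identity \eqref{equ:3} follows by showing the gradient of $\arrowvert\nabla f\arrowvert^2+\R_g-f$ vanishes: differentiating $\arrowvert\nabla f\arrowvert^2$ and inserting the soliton equation gives $\nabla\arrowvert\nabla f\arrowvert^2=2\Ric(g)(\nabla f)+\nabla f$, which combined with $\nabla\R_g=-2\Ric(g)(\nabla f)$ from \eqref{equ:2} shows $\nabla(\arrowvert\nabla f\arrowvert^2+\R_g-f)=0$, so the quantity is a constant $\mu(g)$. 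For \eqref{equ:4} I would antisymmetrize the third covariant derivatives of $f$: since $\nabla_i\nabla_j\nabla_k f=\nabla_i\Ric_{jk}$, the Ricci identity gives $\nabla_i\Ric_{jk}-\nabla_j\Ric_{ik}=-R_{ijkl}\nabla^l f$, and combining this with the contracted second Bianchi identity, which expresses $\div\Rm(g)$ as an antisymmetrized covariant derivative of $\Ric(g)$, yields the claimed relation $\div\Rm(g)(Y,Z,T)=\Rm(g)(Y,Z,\nabla f,T)$.

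For the evolution equations I would first dispatch the scalar case \eqref{equ:7} directly: taking the divergence of \eqref{equ:2}, using $\div\Ric(g)=\tfrac12\nabla\R_g$ together with the soliton equation to evaluate $\Ric_{jk}\nabla^j\nabla^k f=\arrowvert\Ric(g)\arrowvert^2+\tfrac12\R_g$, gives precisely $\Delta_f\R_g+\R_g+2\arrowvert\Ric(g)\arrowvert^2=0$. The tensorial equations \eqref{equ:5} and \eqref{equ:6} I would obtain from the self-similar structure: since $(\phi_{\tau})$ generates the Ricci flow $g(\tau)=(1+\tau)\phi_{\tau}^*g$ with velocity $-\nabla f/(1+\tau)$, the curvature obeys Hamilton's reaction--diffusion equation $\partial_\tau\Rm=\Delta\Rm+\Rm\ast\Rm$ (and its Ricci trace), and at $\tau=0$ the time derivative splits into a Lie-derivative term $\mathcal{L}_{-\nabla f}\Rm(g)$, contributing the drift $-\nabla_{\nabla f}\Rm(g)$ that turns $\Delta$ into $\Delta_f$, plus a scaling term producing the zeroth-order $+\Rm(g)$ (resp.\ $+\Ric(g)$).

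The main obstacle I anticipate is the precise bookkeeping in \eqref{equ:5}--\eqref{equ:6}: one must check that differentiating the factor $(1+\tau)$ at $\tau=0$ yields exactly the coefficient $+1$ on the zeroth-order term, and that $\mathcal{L}_{\nabla f}\Rm(g)$, after using $\mathcal{L}_{\nabla f}g=2\Ric(g)+g$ from the soliton equation to rewrite the terms in which the derivative falls on the slots, reproduces both the drift and the quadratic $\Rm\ast\Rm$ and $\Rm\ast\Ric$ reaction terms with the correct combinatorial coefficients. An alternative, purely elliptic route avoiding the flow is to differentiate \eqref{equ:4} once more and repeatedly commute derivatives with the Ricci identity; this is self-contained but requires careful tracking of the quadratic curvature contractions, which I would absorb into the $\ast$ notation.
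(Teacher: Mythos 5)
Your derivations are correct, but note that the paper does not actually prove this lemma: its ``proof'' is the single line ``See [Chap.\ 1, \cite{Cho-Lu-Ni-I}] for instance.'' Your argument is therefore necessarily a different (self-contained) route, and it is essentially the standard one carried out in that reference. The first group \eqref{equ:1}--\eqref{equ:4} and the scalar equation \eqref{equ:7} are handled exactly as they should be: trace; divergence plus contracted Bianchi; integration of \eqref{equ:2} against the gradient of $\arrowvert\nabla f\arrowvert^2+\R_g-f$; antisymmetrization of $\nabla^3f$ combined with the once-contracted second Bianchi identity; and divergence of \eqref{equ:2} using $\Ric(g)_{jk}\nabla^j\nabla^kf=\arrowvert\Ric(g)\arrowvert^2+\tfrac12\R_g$. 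For \eqref{equ:5}--\eqref{equ:6}, the bookkeeping you flag as the main obstacle does close up, and it is worth recording why the zeroth-order coefficient is $+1$ uniformly: with $g(\tau)=(1+\tau)\phi_\tau^*g$, the $(4,0)$-tensor $\Rm$ has scaling weight $1$, so $\Rm(g(\tau))=(1+\tau)\phi_\tau^*\Rm(g)$ and $\partial_\tau\arrowvert_{\tau=0}\Rm(g(\tau))=\Rm(g)-\mathcal{L}_{\nabla f}\Rm(g)$; expanding $\mathcal{L}_{\nabla f}\Rm(g)=\nabla_{\nabla f}\Rm(g)+\sum_{\mathrm{slots}}\Rm(g)(\ldots,\nabla^2f(\cdot),\ldots)$ and inserting $\nabla^2f=\Ric(g)+\tfrac12 g$, the four slot terms contribute $2\Rm(g)+\Rm(g)\ast\Ric(g)$, so the net zeroth-order term on the time-derivative side is $(1-2)\Rm(g)=-\Rm(g)$; equating with Hamilton's $\Delta\Rm(g)+\Rm(g)\ast\Rm(g)$ and rearranging gives \eqref{equ:5} with coefficient exactly $+1$. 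The same count for $\Ric(g)$ (scaling weight $0$ as a $(2,0)$-tensor, two slots) and for $\R_g$ (weight $-1$, no slots) gives $0-1=-1$ and $-1-0=-1$ respectively, so the coefficient is $+1$ in all three equations precisely because the scaling weight minus half the number of slots equals $-1$ for each of $\Rm$, $\Ric$, $\R_g$; for $\R_g$ this reproduces your direct elliptic computation, which is a useful consistency check between your two methods. Either route is fine: the flow argument buys conceptual clarity and applies to any curvature quantity with a definite scaling weight, while your alternative elliptic route (differentiating \eqref{equ:4} and commuting derivatives) avoids invoking Hamilton's evolution equations altogether and stays within the soliton equation itself.
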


\begin{proof}
See [Chap.$1$,\cite{Cho-Lu-Ni-I}] for instance.

\end{proof}

The next lemma is valid for any gradient Ricci soliton and only uses the second Bianchi identity :
 
\begin{lemma}\label{id-op-prel}
Let $(M^n,g,\nabla f)$ be a gradient Ricci soliton. Then
\begin{eqnarray*}
\nabla_{\nabla f}\Rm(g)(W,X,Y,Z)&=&\nabla_W\div\Rm(g)(Y,Z,X)-\nabla_{X}\div\Rm(g)(Y,Z,W)\\
&&+\Rm(g)(X,\nabla^2 f(W),Y,Z)-\Rm(g)(W,\nabla^2 f(X),Y,Z).
\end{eqnarray*}
\end{lemma}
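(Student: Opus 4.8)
The plan is to plug $\nabla f$ into the uncontracted second Bianchi identity and then trade the two resulting terms for the divergence terms and the Hessian terms on the right-hand side. I would start from the second Bianchi identity written, after one use of the antisymmetry $\Rm(g)(A,B,\cdot,\cdot)=-\Rm(g)(B,A,\cdot,\cdot)$ in the first two slots, as
$$(\nabla_U\Rm(g))(W,X,Y,Z)=(\nabla_W\Rm(g))(U,X,Y,Z)-(\nabla_X\Rm(g))(U,W,Y,Z).$$
Setting $U=\nabla f$ gives immediately
$$\nabla_{\nabla f}\Rm(g)(W,X,Y,Z)=(\nabla_W\Rm(g))(\nabla f,X,Y,Z)-(\nabla_X\Rm(g))(\nabla f,W,Y,Z),$$
so the whole statement reduces to rewriting a single term of the form $(\nabla_W\Rm(g))(\nabla f,X,Y,Z)$.

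To handle such a term I would fix a point $p$ and extend $W,X,Y,Z$ to vector fields that are covariantly constant at $p$, so that all their covariant derivatives vanish there while $\nabla f$ of course does not. At $p$ the Leibniz rule then reads $\nabla_W\bigl[\Rm(g)(\nabla f,X,Y,Z)\bigr]=(\nabla_W\Rm(g))(\nabla f,X,Y,Z)+\Rm(g)(\nabla^2 f(W),X,Y,Z)$, since $\nabla_W\nabla f=\nabla^2 f(W)$. Next I would invoke the soliton identity \eqref{equ:4}, which holds for any gradient Ricci soliton regardless of the soliton constant (it is the contracted second Bianchi identity combined with the defining equation): together with the pair symmetry $\Rm(g)(\nabla f,X,Y,Z)=\Rm(g)(Y,Z,\nabla f,X)$ it gives $\Rm(g)(\nabla f,X,Y,Z)=\div\Rm(g)(Y,Z,X)$. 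Differentiating this equality at $p$ yields $\nabla_W\bigl[\Rm(g)(\nabla f,X,Y,Z)\bigr]=\nabla_W\div\Rm(g)(Y,Z,X)$, and hence
$$(\nabla_W\Rm(g))(\nabla f,X,Y,Z)=\nabla_W\div\Rm(g)(Y,Z,X)-\Rm(g)(\nabla^2 f(W),X,Y,Z).$$

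The second term has exactly the same shape with $W$ and $X$ interchanged, so the identical sub-identity gives $(\nabla_X\Rm(g))(\nabla f,W,Y,Z)=\nabla_X\div\Rm(g)(Y,Z,W)-\Rm(g)(\nabla^2 f(X),W,Y,Z)$. Substituting both expressions into the Bianchi formula, and using the antisymmetry in the first two slots once more in the forms $-\Rm(g)(\nabla^2 f(W),X,Y,Z)=\Rm(g)(X,\nabla^2 f(W),Y,Z)$ and $\Rm(g)(\nabla^2 f(X),W,Y,Z)=-\Rm(g)(W,\nabla^2 f(X),Y,Z)$, reproduces exactly the claimed formula. The computation is entirely mechanical; the only genuine care is in the bookkeeping of slots and signs — keeping straight which Bianchi slot $\nabla f$ lands in, correctly combining the pair symmetry with the two antisymmetries of $\Rm(g)$, and checking that the Hessian terms produced by the Leibniz rule carry precisely the signs that, after one antisymmetrization, yield $+\Rm(g)(X,\nabla^2 f(W),Y,Z)-\Rm(g)(W,\nabla^2 f(X),Y,Z)$. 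I expect this sign-matching, rather than any conceptual difficulty, to be the main obstacle.
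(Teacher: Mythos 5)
Your proof is correct and follows essentially the same route as the paper's: plug $\nabla f$ into the second Bianchi identity, then use the soliton identity \eqref{equ:4} (valid, as you note, for any gradient Ricci soliton) to convert the $\Rm(g)(\nabla f,\cdot,\cdot,\cdot)$ terms into $\div\Rm(g)$ terms, with the Hessian terms arising from differentiating $\nabla f$. The only difference is cosmetic: you work at a point with vector fields made parallel there, while the paper carries the Leibniz correction terms explicitly and reassembles them into $\nabla_W\div\Rm(g)(Y,Z,X)$.
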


\begin{proof}
By the second Bianchi identity,
\begin{eqnarray*}
\nabla_{\nabla f}\Rm(g)(W,X,Y,Z)=-\nabla_W\Rm(g)(X,\nabla f,Y,Z)-\nabla_{X}\Rm(g)(\nabla f,W,Y,Z).
\end{eqnarray*}
Now,
\begin{eqnarray*}
\nabla_W\Rm(g)(X,\nabla f,Y,Z)&=&W\cdot\Rm(g)(X,\nabla f,Y,Z)-\Rm(g)(\nabla_WX,\nabla f,Y,Z)\\
&&-\Rm(g)(X,\nabla^2 f(W),Y,Z)-\Rm(g)(X,\nabla f,\nabla_WY,Z)\\
&&-\Rm(g)(X,\nabla f,Y,\nabla_WZ)\\
&=&-\nabla_W\div\Rm(g)(Y,Z,X)-\Rm(g)(X,\nabla^2 f(W),Y,Z).
\end{eqnarray*}
\end{proof}

We pursue this section by computing the evolution of the norm of the covariant derivatives of the curvature along the Morse flow generated by the potential function in terms of the covariant derivatives of the Ricci curvature.

\begin{prop}\label{id-op}
Let $(M^n,g,\nabla f)$ be an expanding gradient Ricci soliton. 
\begin{enumerate}
\item Then,\begin{eqnarray*}
\nabla_{\nabla f}\arrowvert\Rm(g)\arrowvert^2+4\nabla^2 f(\Rm(g),\Rm(g))&=&-4\tr(\star\rightarrow\langle\nabla_{\star}\div\Rm(g)(\cdot,\cdot,\cdot),\Rm(g)(\cdot,\cdot,\cdot,\star)\rangle)\\
&=&-4\nabla_i\div\Rm(g)_{abc}\Rm(g)_{abci},
\end{eqnarray*}
where $\nabla^2 f(\Rm(g),\Rm(g)):=\langle \Rm(g)(\cdot,\nabla^2f(\cdot),\cdot,\cdot),\Rm(g)(\cdot,\cdot,\cdot,\cdot)\rangle.$\\
\item (Commutation identities) For any tensor $T$, and any positive integer $k$,
\begin{eqnarray}
\begin{bmatrix}\nabla_{\nabla f},\nabla^k\end{bmatrix} T&=&-\frac{k}{2}\nabla^kT+\sum_{i=0}^k\nabla^{k-i}\Ric(g)\ast\nabla^iT,\label{comm-1}\\
\begin{bmatrix}\Delta,\nabla^k\end{bmatrix}T&=&\nabla^kT\ast\Ric(g)+\sum_{i=0}^{k-1}\nabla^{k-i}\Rm(g)\ast\nabla^iT.\label{comm-2}
\end{eqnarray}
\item For any integer $k\geq 1$,\begin{eqnarray*}
\nabla_{\nabla f}\arrowvert\nabla^k\Rm(g)\arrowvert^2+(2+k)\arrowvert\nabla^k\Rm(g)\arrowvert^2&=&\nabla^{k+2}\Ric(g)\ast\nabla^k\Rm(g)\\&&+\Ric(g)\ast\nabla^k\Rm(g)^*2\\
&&+\sum_{i=1}^{k}\nabla^i\Ric(g)\ast\nabla^{k-i}\Rm(g)\ast\nabla^k\Rm(g).
\end{eqnarray*}
\end{enumerate}
\end{prop}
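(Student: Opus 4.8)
For part (1), the plan is to contract the pointwise identity of Lemma~\ref{id-op-prel} against $\Rm(g)(W,X,Y,Z)$ and sum over an orthonormal frame, so that $2\langle\nabla_{\nf}\Rm(g),\Rm(g)\rangle=\nabla_{\nf}|\Rm(g)|^2$ on the left. Using the antisymmetry of $\Rm(g)$ in its first two arguments, the two divergence terms $\nabla_W\div\Rm(g)(Y,Z,X)$ and $-\nabla_X\div\Rm(g)(Y,Z,W)$ become equal after relabelling $W\leftrightarrow X$, and so double up; the same happens for the two Hessian terms. The doubled Hessian contribution is then identified with $-2\,\nabla^2 f(\Rm(g),\Rm(g))$ using the first-pair and pair-exchange symmetries, while the doubled divergence contribution matches $-4\nabla_i\div\Rm(g)_{abc}\Rm(g)_{abci}$ once the index positions are reconciled via $R_{icab}=-R_{abci}$. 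Rearranging gives the stated identity; this step is routine once Lemma~\ref{id-op-prel} is in hand.

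For part (2), I would prove both commutators by induction on $k$, establishing $k=1$ directly. For (\ref{comm-1}), writing $\nabla_{\nf}T=(\nf)^a\nabla_a T$ and differentiating, the Leibniz rule produces $\nabla^2 f$ contracted against $\nabla T$; by the soliton equation $\nabla^2 f=\Ric(g)+g/2$ this splits into a $\Ric(g)\ast\nabla T$ term and the $-\tfrac12\nabla T$ term that drives the coefficient. The remaining piece is a curvature commutator whose $\Rm(g)$-term is contracted with $\nf$; by the soliton identity (\ref{equ:4}) this equals $\div\Rm(g)$, hence by the contracted second Bianchi identity it is of the form $\nabla\Ric(g)\ast T$, which is exactly the $i=0$ summand. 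The general case then follows from $[\nabla_{\nf},\nabla^{k+1}]=[\nabla_{\nf},\nabla]\nabla^k+\nabla[\nabla_{\nf},\nabla^k]$, each commutation contributing one further $-\tfrac12$ and upgrading $-\tfrac{k}{2}$ to $-\tfrac{k+1}{2}$. Identity (\ref{comm-2}) is purely Riemannian: for $k=1$ the Ricci identity with one index traced gives $[\Delta,\nabla]T=\Ric(g)\ast\nabla T+\nabla\Rm(g)\ast T$ (the trace of $\Rm(g)$ producing the $\Ric(g)$), and the same induction yields the general form.

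For part (3), I would compute $\nabla_{\nf}|\nabla^k\Rm(g)|^2=2\langle\nabla_{\nf}\nabla^k\Rm(g),\nabla^k\Rm(g)\rangle$ and expand $\nabla_{\nf}\nabla^k\Rm(g)=\nabla^k(\nabla_{\nf}\Rm(g))+[\nabla_{\nf},\nabla^k]\Rm(g)$. For the first piece I would substitute Lemma~\ref{id-op-prel} together with $\nabla^2 f=\Ric(g)+g/2$: the two Hessian terms collapse to $\Ric(g)\ast\Rm(g)-\Rm(g)$, while the two divergence terms, rewritten through the contracted second Bianchi identity as $\nabla^2\Ric(g)$, become $\nabla^{k+2}\Ric(g)$-contractions after applying $\nabla^k$. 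The $-\Rm(g)$ term furnishes $-2|\nabla^k\Rm(g)|^2$ on contraction, and the commutator $[\nabla_{\nf},\nabla^k]\Rm(g)$ supplies the remaining $-k|\nabla^k\Rm(g)|^2$ through its $-\tfrac{k}{2}\nabla^k\Rm(g)$ term, which together build the sharp coefficient $(2+k)$. Every other term — the Leibniz expansion of $\nabla^k(\Ric(g)\ast\Rm(g))$, the sum in (\ref{comm-1}), and the second-derivative commutator $[\nabla^k,\nabla^2]\Ric(g)=\sum_{a+b=k}\nabla^a\Rm(g)\ast\nabla^b\Ric(g)$ — then sorts into $\Ric(g)\ast\nabla^k\Rm(g)^{\ast2}$ (the pieces carrying no derivative on $\Ric(g)$) and $\sum_{i=1}^{k}\nabla^i\Ric(g)\ast\nabla^{k-i}\Rm(g)\ast\nabla^k\Rm(g)$ (those carrying at least one).

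The genuinely delicate step is the bookkeeping in part (3): one must check that each term generated by the two Leibniz expansions and by the commutator $[\nabla^k,\nabla^2]\Ric(g)$ lands in exactly one of the three advertised buckets, and in particular that the \emph{only} contributions linear in $|\nabla^k\Rm(g)|^2$ are the two yielding the exact coefficient $(2+k)$. The $\ast$-notation absorbs the precise contraction patterns of the subleading curvature terms, so the real content is matching derivative counts and isolating the sharp linear coefficient; parts (1) and (2) are comparatively mechanical given Lemma~\ref{id-op-prel}, the soliton equation, and the Ricci and contracted Bianchi identities.
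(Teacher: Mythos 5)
Your proposal is correct and takes essentially the same route as the paper: part (1) by contracting Lemma~\ref{id-op-prel} against $\Rm(g)$ and doubling the divergence and Hessian terms via the antisymmetries, part (2) by induction on $k$ (which is all the paper says), and part (3) by splitting $\nabla_{\nabla f}\nabla^k\Rm(g)=\nabla^k\nabla_{\nabla f}\Rm(g)+[\nabla_{\nabla f},\nabla^k]\Rm(g)$ and combining (\ref{comm-1}) with the Bianchi/soliton rewriting of $\nabla_{\nabla f}\Rm(g)$, exactly as in the text. One caveat: your $k=1$ case of (\ref{comm-2}) should also contain $\Rm(g)\ast\nabla T$ terms coming from the curvature acting on the indices of $T$ itself (only the term acting on the derivative index traces to $\Ric(g)$) --- but this imprecision is already present in the paper's schematic statement, is harmless for how (\ref{comm-2}) is used later (the paper itself writes $\Rm(g)\ast\nabla^kT$ there), and plays no role in parts (1) and (3), whose proofs rely only on (\ref{comm-1}) and Lemma~\ref{id-op-prel}.
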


\begin{proof}
By lemma \ref{id-op-prel},
\begin{eqnarray*}
\nabla_{\nabla f}\arrowvert\Rm(g)\arrowvert^2&=&2\nabla_{\nabla f}\Rm(g)_{ijkl}\Rm(g)_{ijkl}\\
&=& 2(\nabla_i\div\Rm(g)_{klj}-\nabla_j\div\Rm(g)_{kli}\\
&&+\Rm(g)_{j\nabla^2f(i)kl}-\Rm(g)_{i\nabla^2f(j)kl})\Rm(g)_{ijkl}\\
&=&-4\nabla_i\div\Rm(g)_{jkl}\Rm(g)_{jkli}-4\Rm(g)_{i\nabla^2f(j)kl}\Rm(g)_{ijkl}.\\
\end{eqnarray*}
The commutation identities can be proved by induction on $k$.\\

Concerning the third identity, according to (\ref{comm-1}) and proposition \ref{id-op},
\begin{eqnarray*}
&&\nabla_{\nabla f}\arrowvert\nabla^k\Rm(g)\arrowvert^2=2<\nabla^k\nabla_{\nabla f}\Rm(g),\nabla^k\Rm(g)>+2<[\nabla_{\nabla f},\nabla^k]\Rm(g),\nabla^k\Rm(g)>\\
&&=-(2+k)\arrowvert\nabla^k\Rm(g)\arrowvert^2+\nabla^{k+2}\Ric(g)\ast\nabla^k\Rm(g)+\Ric(g)\ast\nabla^k\Rm(g)^*2\\
&&+\sum_{i=1}^k\nabla^{k-i}\Ric(g)\ast\nabla^i\Rm(g)\ast\nabla^k\Rm(g).
\end{eqnarray*}

\end{proof}

For an expanding gradient Ricci soliton $(M^n,g,\nabla f)$, we define $v:M^n\rightarrow\mathbb{R}$ by $$v(x):=f(x)+\mu(g)+n/2,$$ for $x\in M^n$. Then, $v$ enjoys the following properties.

\begin{prop}\label{pot-fct-est}
Let $(M^n,g,\nabla f)$ be a non Einstein expanding gradient Ricci soliton.
\begin{eqnarray}
&&\Delta_fv=v,\label{equ:8} \\
&v&>\arrowvert\nabla v\arrowvert^2.\label{inequ:1}
\end{eqnarray}
Assume $\Ric(g)\geq 0$ and assume $(M^n,g,\nabla f)$ is normalized. Then $M^n$ is diffeomorphic to $\mathbb{R}^n$ and
\begin{eqnarray}
&&v\geq \frac{n}{2}>0.\label{inequ:2}\\
&&\frac{1}{4}r_p(x)^2+\min_{M^n}v\leq v(x)\leq\left(\frac{1}{2}r_p(x)+\sqrt{\min_{M^n}v}\right)^2,\quad \forall x\in M^n,\label{inequ:3}\\
&&\AVR(g):=\lim_{r\rightarrow+\infty}\frac{\vol B(q,r)}{r^n}>0,\quad\forall q\in M^n,\label{inequ:avr}\\
&&-C(n,V_0,R_0)\leq\min_{M^n}f\leq 0\quad;\quad\mu(g)\geq\max_{M^n}\R_g\geq 0,\label{inequ:ent}
\end{eqnarray}
where $V_0$ is a positive number such that $\AVR(g)\geq V_0$, $R_0$ is such that $\sup_{M^n}\R_g\leq R_0$ and $p\in M^n$ is the unique critical point of $v$.
\end{prop}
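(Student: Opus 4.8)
The first two identities are immediate from Lemma~\ref{id-EGS}. For (\ref{equ:8}), since $\nabla v=\nabla f$ one has $\Delta_f v=\Delta f+|\nabla f|^2$; plugging in $\Delta f=\R_g+n/2$ from (\ref{equ:1}) and $|\nabla f|^2=f+\mu(g)-\R_g$ from (\ref{equ:3}) gives $\Delta_f v=f+\mu(g)+n/2=v$. The same substitution produces the pointwise identity $v-|\nabla v|^2=\R_g+n/2$, so (\ref{inequ:1}) is equivalent to the sharp lower bound $\R_g>-n/2$. I would extract this from the evolution equation (\ref{equ:7}): writing $u=\R_g$ and using $|\Ric(g)|^2\ge \R_g^2/n$, at a hypothetical interior minimum with $\R_g<-n/2$ one would get $0\le\Delta_f\R_g\le-\R_g(1+2\R_g/n)<0$, a contradiction, with the noncompact end handled by the decay $\R_g\to0$ at infinity (or the Omori--Yau maximum principle); the equality case of Cauchy--Schwarz forces strictness unless the soliton is Einstein, which is why the hypothesis is needed.

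Assume now $\Ric(g)\ge0$ and that the soliton is normalized. Then $\nabla^2 f=\Ric(g)+g/2\ge g/2>0$, so $f$ is strictly convex; being bounded below and growing at least quadratically along geodesics it is proper with a single critical point $p$, whence $M^n$ is diffeomorphic to $\mathbb{R}^n$ and $p$ is the unique critical point of $v=f+\mathrm{const}$. Evaluating $v=|\nabla f|^2+\R_g+n/2$ at $p$ gives $\min_{M^n}v=v(p)=\R_g(p)+n/2\ge n/2$, which is (\ref{inequ:2}). For (\ref{inequ:3}), integrating $\nabla^2 v\ge \tfrac12 g$ twice along a unit-speed minimizing geodesic from $p$ (where $\nabla v=0$) yields $v\ge\min_{M^n}v+\tfrac14 r_p^2$, while (\ref{inequ:1}) gives $|\nabla\sqrt v|=|\nabla v|/(2\sqrt v)<\tfrac12$, so $\sqrt v$ is $\tfrac12$-Lipschitz and $\sqrt{v(x)}\le\sqrt{\min_{M^n}v}+\tfrac12 r_p(x)$, which squares to the stated upper bound.

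For (\ref{inequ:avr}) I would argue by the co-area formula on the sublevel sets $\{f\le t\}$, which are compact since $p$ is the only critical point. On the regular level $\Sigma_t=\{f=t\}$, (\ref{equ:3}) gives $|\nabla f|^2=t+\mu(g)-\R_g\le t+\mu(g)$, while the divergence theorem and (\ref{equ:1}) give $\int_{\Sigma_t}|\nabla f|\,d\sigma=\int_{\{f\le t\}}(\R_g+n/2)\,d\mu_g\ge \tfrac{n}{2}D(t)$, where $D(t)=\vol\{f\le t\}$. Hence $\mathrm{Area}(\Sigma_t)\ge \tfrac{(n/2)D(t)}{\sqrt{t+\mu(g)}}$ and $D'(t)=\int_{\Sigma_t}|\nabla f|^{-1}\,d\sigma\ge \tfrac{(n/2)D(t)}{t+\mu(g)}$; integrating $\tfrac{d}{dt}\log D\ge \tfrac{n/2}{t+\mu(g)}$ gives $D(t)\ge c\,t^{n/2}$, and since $\{f\le t\}\subset B(p,2\sqrt{t-\min_{M^n}f})$ by the lower bound in (\ref{inequ:3}) this forces $\vol B(p,r)\ge c(n)r^n$, so $\AVR(g)>0$. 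I would also note here that, by (\ref{equ:2}), $\langle\nabla\R_g,\nabla f\rangle=-2\Ric(g)(\nabla f,\nabla f)\le0$, so $\R_g$ is nonincreasing along the gradient flow lines of $f$, all of which emanate from $p$; thus $\max_{M^n}\R_g=\R_g(p)$.

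Finally, for (\ref{inequ:ent}) the normalization $\int_M e^{-f}\,d\mu_g=(4\pi)^{n/2}$ with $f=v-\mu(g)-n/2$ reads $\int_M e^{-v}\,d\mu_g=(4\pi)^{n/2}e^{-\mu(g)-n/2}$. Using $v\ge\min_{M^n}v+\tfrac14 r_p^2$ and Bishop--Gromov ($\int_M e^{-r_p^2/4}\,d\mu_g\le\int_{\mathbb{R}^n}e^{-|x|^2/4}=(4\pi)^{n/2}$) gives $\int_M e^{-v}\,d\mu_g\le e^{-\min_{M^n}v}(4\pi)^{n/2}$, hence $\min_{M^n}v\le\mu(g)+n/2$, that is $\min_{M^n}f\le0$; combined with $\mu(g)=\R_g(p)-\min_{M^n}f$ (read off at $p$) and $\max_{M^n}\R_g=\R_g(p)\ge0$ this yields $\mu(g)\ge\max_{M^n}\R_g\ge0$. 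For the lower bound on $\min_{M^n}f$ I would instead use the upper bound $v\le(\tfrac12 r_p+\sqrt{\min_{M^n}v})^2$ together with the volume lower bound $\vol B(p,r)\ge V_0 r^n$ and $\min_{M^n}v=\R_g(p)+n/2\le R_0+n/2$ to bound $\int_M e^{-v}\,d\mu_g$ from below by some $c(n,V_0,R_0)>0$, giving $\mu(g)\le C(n,V_0,R_0)$ and therefore $\min_{M^n}f=\R_g(p)-\mu(g)\ge-C(n,V_0,R_0)$. The hard part is precisely this last block: the quantitative estimates (\ref{inequ:avr}) and (\ref{inequ:ent}) require feeding the two-sided Gaussian-type control of $v$ from (\ref{inequ:3}) into the normalized weighted integral and using Bishop--Gromov comparison in both directions, so the bookkeeping of which curvature and volume bound governs which tail of $\int_M e^{-v}$ is the delicate point, and it must be done in the order (\ref{inequ:avr}) then (\ref{inequ:ent}) since the lower bound on $\min_{M^n}f$ relies on $\AVR(g)>0$.
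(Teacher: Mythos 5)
Your treatment of (\ref{equ:8}), (\ref{inequ:2}), (\ref{inequ:3}), (\ref{inequ:avr}) and (\ref{inequ:ent}) is correct and essentially the paper's own argument: (\ref{equ:8}) by adding (\ref{equ:1}) and (\ref{equ:3}); properness and strict convexity from $\nabla^2 v\geq g/2$ for the diffeomorphism with $\mathbb{R}^n$ and both sides of (\ref{inequ:3}); the coarea/divergence-theorem argument for (\ref{inequ:avr}), where the paper phrases the mechanism as monotonicity of $t\mapsto\vol\{f=t\}\,t^{-(n-1)/2}$ while you integrate the equivalent differential inequality for $D(t)=\vol\{f\leq t\}$ — same substance, and your version is actually more explicit; and the same Gaussian bookkeeping for (\ref{inequ:ent}), your detour through $\mu(g)$ instead of $\min_{M^n}f$ being an equivalent rearrangement since $\mu(g)=\R_g(p)-\min_{M^n}f$ with $0\leq\R_g(p)\leq R_0$. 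You also correctly isolate, via (\ref{equ:2}), the monotonicity of $\R_g$ along the gradient flow to get $\max_{M^n}\R_g=\R_g(p)$, exactly as in the paper, and your observation that (\ref{inequ:avr}) must precede (\ref{inequ:ent}) matches the paper's logic.

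The one genuine gap is (\ref{inequ:1}). The paper does not prove it; it cites the maximum principle at infinity of Pigola--Rimoldi--Setti, which gives $\Delta f=\R_g+n/2>0$ for any non-Einstein expander. Your self-contained replacement does not close as written: the argument at a ``hypothetical interior minimum'' of $\R_g$ presupposes the minimum is attained, which is not guaranteed on a noncompact manifold; the first fix you offer, the decay $\R_g\to 0$ at infinity, is not available, since at this stage of the proposition nothing whatsoever is assumed on curvature decay or even boundedness ($\Ric(g)\geq 0$ enters only in the second half); and the Omori--Yau fallback needs two further ingredients to be legitimate. One must use the weighted version for $\Delta_f$ (the classical one needs $\Ric(g)$ bounded below, which is not a hypothesis) — it does apply here because $\Ric(g)-\nabla^2f=-g/2$ is bounded below — and one must first know $\inf_{M^n}\R_g>-\infty$, since Omori--Yau applies to functions bounded below. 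Supplying these points is precisely the content of the reference the paper cites, so either carry that out in full or cite it. Finally, your remark that equality in Cauchy--Schwarz forces the Einstein case is the right mechanism for strictness, but it must be run through the strong minimum principle: setting $u:=\R_g+n/2\geq 0$, one has $\Delta_fu\leq u-\frac{2}{n}u^2\leq u$, so $u(x_0)=0$ at one point forces $u\equiv 0$ and then $\Ric(g)=-g/2$; pointwise equality at a single point is not enough.
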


\begin{rk}
\begin{itemize}
\item (\ref{inequ:avr}) is due to Hamilton : [Chap. $9$ ; \cite{Ben}].
\item In \cite{Car-Ni} , it is proved that $\mu(g)\geq 0$ with equality if and only if it is isometric to the Euclidean space by studying the linear entropy along the heat kernel. Their argument works for Riemannian manifolds with nonnegative Ricci curvature. We give here a simpler but maybe less enlightening proof in the setting of gradient Ricci expanders.
\end{itemize}
\end{rk}

\begin{proof}
(\ref{equ:8}) comes from adding equations (\ref{equ:1}) and (\ref{equ:3}). (\ref{inequ:1}) comes from the maximum principle at infinity established in \cite{Pig-Rim-Set} ensuring that for a non trivial expanding gradient Ricci soliton, one has $\Delta v=\Delta f>0$.\\

Now, if $\Ric(g)\geq0$, the soliton equation ensures that the potential function $f$ (or $v$) satisfies $\nabla^2f\geq(g/2)$ which implies that $v$ is a proper strictly convex function. By Morse-type arguments, one can show that $M$ is diffeomorphic to $\mathbb{R}^n$ and the lower bound of (\ref{inequ:3}) is achieved by integrating the previous differential inequality involving the Hessian of $v$. The upper bound of (\ref{inequ:3}) holds more generally since it comes from (\ref{inequ:1}). \\

Now, for the convenience of the reader, we reprove briefly (\ref{inequ:avr}) with the help of the sublevel set of the potential function. Indeed, with the help of the coarea formula, the rescaled volume of the level sets $t\rightarrow \vol\{f=t\}t^{-(n-1)/2}$ is non decreasing for $t$ larger than $\min_{M^n}f$. By the coarea formula again, it implies directly that the rescaled volume of the sublevel sets $t\rightarrow \vol\{f\leq t\}t^{-n/2}$ is bounded from below by a positive constant. As the potential function and the distance function to the square are comparable by (\ref{inequ:3}), the volume ratio $r^{-n}\vol B(p,r)$ is bounded from below by a positive constant for large radii $r$ hence for any radius by the Bishop-Gromov theorem.\\

Finally, we show first that the scalar curvature is bounded. Indeed, if $(\phi_t)_t$ is the Morse flow associated to $f$, then
\begin{eqnarray*}
\partial_t\R_g=-2\Ric(g)\left(\frac{\nabla f}{\arrowvert\nabla f\arrowvert},\frac{\nabla f}{\arrowvert\nabla f\arrowvert}\right)\leq 0.
\end{eqnarray*}
In particular, it means that $t\rightarrow\max_{f=t}\R_g$ is non increasing and therefore, by continuity of the scalar curvature, we get
\begin{eqnarray*}
\sup_{M^n}\R_g\leq\max_{f=\min_{M^n}f}\R_g=\R_g(p),
\end{eqnarray*}
where $\crit(f)=\{p\}$.

Now, as $\nabla^2f\geq g/2$, 
 \begin{eqnarray*}
f(x)&\leq& \frac{r_p(x)^2}{4}+\min_{M^n}f+\sqrt{\min_{M^n}f+\mu(g)}\frac{r_p(x)}{2}\\
&\leq& \frac{r_p(x)^2}{4}+\min_{M^n}f+\sqrt{\max_{M^n}\R_g}\frac{r_p(x)}{2},
\end{eqnarray*}
for any $x\in M^n$ since $\min_{M^n}f+\mu(g)=\max_{M^n}\R_g$. As $f$ is normalized, we have
\begin{eqnarray*}
(4\pi)^{n/2}&=&\int_{M^n}e^{-f}d\mu(g)\geq\left(nV_0e^{-\min_{M^n}f}\right)\int_0^{+\infty}e^{-r^2/4-\sqrt{\max_{M^n}\R_g}r}r^{n-1}dr\\
&\geq&e^{-\min_{M^n}f}C(n,V_0,R_0),
\end{eqnarray*}
for some positive constant $C(n,V_0,R_0)$. Hence a lower bound on $\min_{M^n} f$. By using a lower bound on $f$ of the form
$$\min_{M^n}f+\frac{r_p(x)^2}{4}\leq f(x),$$
for any $x\in M^n$, one gets by a similar argument using the Bishop-Gromov theorem that $\min_{M^n}f\leq 0$ since $$\int_{\mathbb{R}^n}e^{-\|x\|^2/4}d\mu(\eucl)(x)=(4\pi)^{n/2}.$$
As a consequence of the previous estimates, we get for free that $\mu(g)\geq\max_{M^n}\R_g\geq 0$ without using \cite{Car-Ni} and also an upper bound for $\mu(g)$ as expected : $\mu(g)\leq C(n,V_0,R_0)$. Therefore, if $\mu(g)=0$, one has $\R_g=0$ which implies that $\Ric(g)=0$ by equation (\ref{equ:7}). \cite{Pet-Wyl-Rig} shows then that $(M^n,g,\nabla f)$ is isometric to the Gaussian soliton $(\mathbb{R}^n,\eucl,r\partial_r/2)$. Indeed, the potential function satisfies $\nabla^2f=g/2$ which means that $f(x)=\min_{M^n}f+r_p^2(x)/4$ for any $x\in M^n$, where $p$ is the only critical point of $f$. In particular, it means that $r_p^2$ is smooth on $M^n$ and that $\Delta r_p^2=2n$ on a Riemannian manifold with nonnegative Ricci curvature : the (infinitesimal) Bishop-Gromov theorem then implies that $(M^n,g)$ is Euclidean.

\end{proof}

\subsection{Local Shi's estimates}

We derive now some baby Shi's estimates for expanding gradient Ricci solitons for large radii. The main thing here is to estimate the influence of the supremum of the curvature operator. %It will be crucial in the proof of theorem \ref{rig-pin-sgs}.

\begin{lemma}\label{elliptic}
If $(M^n,g,\nabla f)$ is an expanding gradient Ricci soliton such that $\Ric(g)\geq -(n-1)K$ on $B(p,r)$ with $K\geq 0$, then, for any $k\geq 0$ and $r\geq 1$,
\begin{eqnarray*}
\sup_{B(p,r/2)}\arrowvert\nabla^k \Rm(g)\arrowvert\leq C\left(n,\sup_{A(p,r/2,r)}\frac{\arrowvert \nabla f\arrowvert}{r},k,Kr^2\right)\sup_{B(p,r)}\arrowvert\Rm(g)\arrowvert(1+\sup_{B(p,r)}\arrowvert\Rm(g)\arrowvert^{k/2}).
\end{eqnarray*}
\end{lemma}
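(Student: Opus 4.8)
The plan is to run a Bernstein-type iteration for the weighted Laplacian $\Delta_f$, driven by the evolution equations of Lemma \ref{id-EGS} and the derivative identities of Proposition \ref{id-op}, and then to localize with a cutoff built from $r_p$ whose weighted Laplacian is controlled through the drift term and the Laplacian comparison theorem. Throughout I abbreviate $u_k:=\arrowvert\nabla^k\Rm(g)\arrowvert^2$ and $\Lambda:=\sup_{B(p,r)}\arrowvert\Rm(g)\arrowvert$, and I use repeatedly that any covariant derivative of the Ricci tensor is a contraction of the corresponding derivative of $\Rm(g)$, so that $\arrowvert\nabla^j\Ric(g)\arrowvert\leq C(n)\arrowvert\nabla^j\Rm(g)\arrowvert$. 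First I would record the basic reaction--diffusion inequality: combining the third identity of Proposition \ref{id-op} (computing $\nabla_{\nabla f}u_k$) with the computation of $\Delta u_k$ coming from $\Delta_f\Rm(g)=-\Rm(g)-\Rm(g)\ast\Rm(g)$ and the commutators (\ref{comm-1})--(\ref{comm-2}), one obtains schematically
\[
\Delta_f u_k\;\geq\;2u_{k+1}-(2+k)u_k-c(n,k)\sum_{a+b=k}\sqrt{u_a}\,\sqrt{u_b}\,\sqrt{u_k}.
\]
The favourable term $2u_{k+1}$ will absorb gradient cross-terms, while the only diagonal reaction term that is not of lower order is $\arrowvert\Rm(g)\arrowvert\,u_k\leq\Lambda u_k$.

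Next comes the induction on $k$, the step being a Bernstein argument. The case $k=0$ is immediate. Assuming control of $u_0,\dots,u_{k-1}$ on a ball $B(p,\rho)$ with $r/2\leq\rho\leq r$, I would study the auxiliary function $F:=(A_k+u_{k-1})u_k$, with $A_k$ a large multiple (depending on $n$, $k$ and the already-established bounds) of $\Lambda^2(1+\Lambda^{k-1})$. Expanding $\Delta_f F=u_k\,\Delta_f u_{k-1}+(A_k+u_{k-1})\,\Delta_f u_k+2\langle\nabla u_{k-1},\nabla u_k\rangle$, the term $(A_k+u_{k-1})\,2u_{k+1}$ absorbs by Young's inequality both the cross-term (estimated by $\sqrt{u_{k-1}u_k}\cdot\sqrt{u_k u_{k+1}}$) and the $u_{k+1}$ coming from $\Delta_f u_k$, while $u_k\,\Delta_f u_{k-1}\geq 2u_k^2-(\dots)$ supplies the leading positive quadratic. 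After discarding the remaining $u_{k+1}$ contribution, choosing $A_k$ large enough to beat the constants, and bounding the lower-order reaction terms by the inductive hypothesis, I would arrive at
\[
\Delta_f F\;\geq\;\gamma\,F^2-\beta\,F,
\]
with $\gamma$ and $\beta$ explicit in $n$, $k$, $\Lambda$ and the inductive constants.

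Finally I would localize. Let $\phi=\psi(r_p)$ with $\psi$ a standard cutoff equal to $1$ on $[0,\rho']$ and vanishing outside $[0,\rho]$, so that $\arrowvert\psi'\arrowvert\leq c/r$ and $\arrowvert\psi''\arrowvert\leq c/r^2$. Applying the maximum principle to $\phi^2F$ on the compact ball $\overline{B(p,\rho)}$, at an interior maximum one has $\nabla(\phi^2F)=0$ and $\Delta_f(\phi^2F)\leq0$; the error terms involve $\Delta_f\phi=\psi'\Delta r_p+\psi''\arrowvert\nabla r_p\arrowvert^2+\psi'\langle\nabla f,\nabla r_p\rangle$. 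Here $\psi'\langle\nabla f,\nabla r_p\rangle$ is bounded by $(c/r)\sup_{A(p,r/2,r)}\arrowvert\nabla f\arrowvert$, producing the dependence on $\sup_{A(p,r/2,r)}\arrowvert\nabla f\arrowvert/r$, while $\Delta r_p$ is controlled by the Laplacian comparison theorem under $\Ric(g)\geq-(n-1)K$ and $r\geq1$, producing the dependence on $Kr^2$. Feeding these into $\Delta_f F\geq\gamma F^2-\beta F$ yields a uniform bound on $\sup(\phi^2F)$, hence on $\sup_{B(p,\rho')}u_k$, of the desired form $C\,\Lambda^2(1+\Lambda^k)$; iterating over a finite nested family of radii between $r$ and $r/2$ completes the induction and gives the stated estimate after taking square roots.

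The step I expect to be the main obstacle is the bookkeeping in the Bernstein argument: one must verify that, with the single favourable term $2u_{k+1}$, all gradient cross-terms and higher reaction terms can be absorbed with the correct powers of $\Lambda$, so that the final constant genuinely depends only on $n$, $k$, $\sup_{A(p,r/2,r)}\arrowvert\nabla f\arrowvert/r$ and $Kr^2$. A secondary delicate point is the simultaneous control, inside $\Delta_f\phi$, of the drift term $\langle\nabla f,\nabla r_p\rangle$ and of $\Delta r_p$ through comparison geometry, which is precisely what forces the two geometric parameters appearing in the constant.
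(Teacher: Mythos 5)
Your proposal follows essentially the same route as the paper's proof: the same Shi-type Bernstein argument for the weighted Laplacian, with the auxiliary function $(a+u_{k-1})u_k$ (the paper takes $F=(U+a)V$ with $a=7\sup_{B(p,r)}U$ for the case $k=1$ it writes out), the same cutoff $\psi(r_p/r)$ and maximum principle, and the same treatment of the error terms, where the drift $\langle\nabla f,\nabla\phi\rangle$ produces the dependence on $\sup_{A(p,r/2,r)}\arrowvert\nabla f\arrowvert/r$ and Laplacian comparison under $\Ric(g)\geq-(n-1)K$ produces the dependence on $Kr^2$. The bookkeeping you flag as the main risk is exactly what the paper's choice $a=7\sup_{B(p,r)}U$ handles (it makes $u_{k-1}/(A_k+u_{k-1})$ small enough for Young's inequality to leave the positive quadratic term intact), so your outline is correct.
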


\begin{proof}

Along the proof, $c$ is a constant which only depends on $n$ but can vary from a line to another. We only give the proof for $k=1$.\\

Using the commutation identities of proposition \ref{id-op}, one can prove that
\begin{eqnarray*}
\Delta_f\arrowvert\nabla\Rm(g)\arrowvert^2\geq2\arrowvert\nabla^2\Rm(g)\arrowvert^2-\left(\frac{3}{2}+c(n)\arrowvert\Rm(g)\arrowvert\right)\arrowvert\nabla\Rm(g)\arrowvert^2.
\end{eqnarray*}
The coefficient $3/2$ has no importance in this proof.

Following \cite{Shi-Def} and \cite{Car-Rig-Old}, define $U:=\arrowvert\Rm(g)\arrowvert^2$, $V:=\arrowvert\nabla\Rm(g)\arrowvert^2$ and $W:=\arrowvert\nabla^2\Rm(g)\arrowvert^2$. Consider the function $F:=(U+a)V$ where $a$ is a positive constant to be defined later. $F$ satisfies,
\begin{eqnarray*}
\Delta_f F&=&V(\Delta_f U)+(U+a)(\Delta_f V)+2\langle\nabla U,\nabla V\rangle\\
&\geq&V(2V-(c\arrowvert\Rm(g)\arrowvert+2) U)+(U+a)(2W-(c\arrowvert\Rm(g)\arrowvert+2) W)\\&-&8U^{1/2}VW^{1/2}\\
&\geq&2V^2+2(U+a)W-16 UW-V^2-(c\arrowvert\Rm(g)\arrowvert+4) F\\
&=&V^2+2W(a-7U)-(c\arrowvert\Rm(g)\arrowvert +4)F,
\end{eqnarray*}
where, in the third inequality, we use $2ab\leq a^2/\alpha+\alpha b^2$, for positive numbers $a$ and $b$ with $\alpha=4$. If $a:= 7\sup_{B(p,r)}U$, one gets on $B(p,r)$,
\begin{eqnarray}\label{ineg-max}
\Delta_f F\geq V^2-(c\arrowvert\Rm(g)\arrowvert +4) F.
\end{eqnarray}

Let $\phi:M^n\rightarrow[0,1]$ be a smooth positive function with compact support defined by $\phi(x):=\psi(r_p(x)/r)$ where $r>0$ and $\psi:[0,+\infty[\rightarrow[0,1]$ is a smooth positive function satisfying
\begin{eqnarray*}
\psi\arrowvert_{[0,1/2]}\equiv 1,\quad \psi\arrowvert_{[1,+\infty[}\equiv 0,\quad \psi'\leq 0,\quad \frac{\psi'^2}{\psi}\leq c,\quad \psi''\geq -c.
\end{eqnarray*}

 Let $G:=\phi F$. Then $G$ satisfies,
 \begin{eqnarray*}
\Delta_f G&=&\phi(\Delta_f F)+2\langle\nabla \phi,\nabla F\rangle+F(\Delta_f \phi).
\end{eqnarray*}

At a point where $G$ attains its maximum,
\begin{eqnarray*}
\nabla G=F\nabla \phi+\phi\nabla F=0,\quad \Delta G\leq 0.
\end{eqnarray*}

Multiplying (\ref{ineg-max}) by $\phi$, we get at  a point where $G$ attains its maximum,
\begin{eqnarray*}
0\geq(\phi V)^2-\phi G(c\arrowvert\Rm(g)\arrowvert +4)-2G\frac{\arrowvert\nabla\phi\arrowvert^2}{\phi}+G(\Delta \phi+\langle \nabla f,\nabla \phi\rangle).
\end{eqnarray*}
Now, in any case, $a\phi V\leq G\leq2a\phi V$. Therefore, if $a>0$,
\begin{eqnarray*}
0\geq\left(\frac{G}{8a}\right)^2+G\left[-\phi(c\arrowvert\Rm(g)\arrowvert +4)-2\frac{\arrowvert\nabla\phi\arrowvert^2}{\phi}+\Delta_f \phi\right].
\end{eqnarray*}
So that if $G$ does not vanish identically on $B(p,r)$,
\begin{eqnarray*}
\phi V\leq ca\left[\phi(c\arrowvert\Rm(g)\arrowvert +4)+2\frac{\arrowvert\nabla\phi\arrowvert^2}{\phi}-\Delta_f \phi\right].
\end{eqnarray*}
Now,
\begin{eqnarray*}
\nabla \phi=\frac {\psi'}{r}\nabla r_p,\quad\Delta \phi=\frac{\psi''}{r^2}+\frac{\psi'}{r}\Delta r_p.
\end{eqnarray*}
Hence,
\begin{eqnarray*}
2\frac{\arrowvert\nabla\phi\arrowvert^2}{\phi}-\Delta_f \phi=\frac{1}{r^2}\left[\frac{2\psi'^2}{\psi}-\psi''\right]-\frac{\psi'}{r}(\langle \nabla f,\nabla r_p\rangle+\Delta r_p).
\end{eqnarray*}
On the other hand, $\Ric(g)\geq-(n-1)K$ on $B(p,r)$, with $K\geq 0$, by comparison theorem,
\begin{eqnarray*}
\Delta r_p&\leq& (n-1)K^{1/2}\coth(K^{1/2}r_p)\\
&\leq&\frac{n-1}{r_p}(1+K^{1/2}r_p)\leq\frac{2(n-1)}{r}(1+K^{1/2}r) \quad\mbox{on $B(p,r)\backslash B(p,r/2)$.}
\end{eqnarray*}
 Therefore, by the very definition of $\psi$,
 \begin{eqnarray*}
&&\sup_{B(p,r/2)}\arrowvert\nabla\Rm(g)\arrowvert\leq\\
&& c\sup_{B(p,r)}\arrowvert\Rm(g)\arrowvert\left[\sup_{B(p,r)}\arrowvert\Rm(g)\arrowvert+1+\frac{\sup_{A(p,r/2,r)}\arrowvert \nabla f\arrowvert}{r}+\frac{1}{r^2}(1+K^{1/2}r)\right]^{1/2}.
\end{eqnarray*}

\end{proof}

Finally, we consider local Shi's estimates for solutions of the weighted laplacian with a potential depending on the curvature.

\begin{lemma}[Local covariant derivatives estimates]\label{loc-est-tensor-egs}
Let $(M^n,g,\nabla f)$ be an expanding gradient Ricci soliton such that $\Ric(g)\geq -(n-1)K$ on $B(p,r)$ with $K\geq 0$. Let $T$ be a $C^2$ tensor satisfying
\begin{eqnarray}
\Delta_fT=-\lambda T+\Rm(g)\ast T,\label{evol-equ-gen-tens}
\end{eqnarray}
for some $\lambda\in\mathbb{R}$.

Then, for any $k\geq 0$ and $r\geq 1$,
\begin{eqnarray*}
\sup_{B(p,r/2)}\arrowvert\nabla^kT\arrowvert&\leq&C\left(n,\lambda,\sup_{A(p,r/2,r)}\frac{\arrowvert \nabla f\arrowvert}{r},k,Kr^2\right)\sup_{B(p,r)}\arrowvert T\arrowvert(1+\sup_{B(p,r)}\arrowvert\Rm(g)\arrowvert^{k/2})
\end{eqnarray*}
\end{lemma}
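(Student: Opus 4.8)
The plan is to argue by induction on $k$, following the template of the proof of Lemma \ref{elliptic}, to which this statement reduces when $T=\Rm(g)$. The case $k=0$ is trivial (with constant $1$), so I would assume the estimate known at all orders $\le k-1$. The point that makes the induction self-contained is that the covariant derivatives of the curvature which will appear as \emph{coefficients} are themselves controlled by Lemma \ref{elliptic}: on $B(p,r)$ the quantities $\sup\arrowvert\nabla^j\Rm(g)\arrowvert$, $0\le j\le k$, are bounded by $\sup_{B(p,r)}\arrowvert\Rm(g)\arrowvert$ and its powers, hence may be treated as known constants of the desired shape.

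First I would record the Bochner-type inequality for $\arrowvert\nabla^kT\arrowvert^2$. Differentiating \eqref{evol-equ-gen-tens} $k$ times and commuting $\nabla^k$ through $\Delta_f=\Delta+\nabla_{\nabla f}$ by means of the commutation identities \eqref{comm-1} and \eqref{comm-2} of Proposition \ref{id-op} gives, schematically,
\begin{eqnarray*}
\Delta_f\nabla^kT=-\Bigl(\lambda+\tfrac{k}{2}\Bigr)\nabla^kT+\sum_{j=0}^{k}\nabla^{j}\Rm(g)\ast\nabla^{k-j}T+\sum_{j=0}^{k}\nabla^{j}\Ric(g)\ast\nabla^{k-j}T.
\end{eqnarray*}
Pairing with $\nabla^kT$ and isolating the $j=0$ terms (the only ones proportional to $\arrowvert\nabla^kT\arrowvert^2$) yields
\begin{eqnarray*}
\Delta_f\arrowvert\nabla^kT\arrowvert^2\ge 2\arrowvert\nabla^{k+1}T\arrowvert^2-\bigl(2\lambda+k+c(n)\arrowvert\Rm(g)\arrowvert\bigr)\arrowvert\nabla^kT\arrowvert^2-L_k,
\end{eqnarray*}
where $L_k$ gathers the coupling terms $\nabla^{j}\Rm(g)\ast\nabla^{k-j}T\ast\nabla^kT$ and $\nabla^{j}\Ric(g)\ast\nabla^{k-j}T\ast\nabla^kT$ with $1\le j\le k$. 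Each such term carries at least one derivative of the curvature and at most $k-1$ derivatives of $T$, so by the inductive hypothesis together with Lemma \ref{elliptic} it is bounded by $\sup_{B(p,r)}\arrowvert T\arrowvert$, a power of $\sup_{B(p,r)}\arrowvert\Rm(g)\arrowvert$, and a single factor $\arrowvert\nabla^kT\arrowvert$.

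Next I would run the cut-off maximum principle exactly as in Lemma \ref{elliptic}. Writing $U:=\arrowvert\nabla^{k-1}T\arrowvert^2$ (bounded by induction), $V:=\arrowvert\nabla^kT\arrowvert^2$ and $W:=\arrowvert\nabla^{k+1}T\arrowvert^2$, I would study $F:=(U+a)V$ with $a:=7\sup_{B(p,r)}U$ and the spatial cut-off $\phi=\psi(r_p/r)$ of Lemma \ref{elliptic}. Combining the order $k-1$ and order $k$ Bochner inequalities, the term $2\langle\nabla U,\nabla V\rangle$ is absorbed through $\arrowvert\langle\nabla U,\nabla V\rangle\arrowvert\le 4U^{1/2}VW^{1/2}$ and the choice of $a$, giving $\Delta_fF\ge V^2-(c(n)\arrowvert\Rm(g)\arrowvert+C)F-(\text{lower order})$ on $B(p,r)$. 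Evaluating $G:=\phi F$ at an interior maximum then bounds $\sup_{B(p,r/2)}V$. Here the lower Ricci bound $\Ric(g)\ge-(n-1)K$ enters, via the Laplacian comparison theorem applied to $\Delta_f\phi$, only through $\Delta r_p$ and so produces the dependence on $Kr^2$, while $\langle\nabla f,\nabla\phi\rangle$ produces the dependence on $\sup_{A(p,r/2,r)}\arrowvert\nabla f\arrowvert/r$; the successive nesting of radii is handled by the standard telescoping of Shi (equivalently, by a single test function combining the $\arrowvert\nabla^jT\arrowvert^2$, $j\le k$).

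The main obstacle is the power-counting needed to land exactly on the factor $1+\sup_{B(p,r)}\arrowvert\Rm(g)\arrowvert^{k/2}$ rather than a cruder polynomial in the curvature. The mechanism is visible in the constant $a$: by the inductive hypothesis $a=7\sup_{B(p,r)}\arrowvert\nabla^{k-1}T\arrowvert^2$ is of size $\sup_{B(p,r)}\arrowvert T\arrowvert^2\,(1+\sup_{B(p,r)}\arrowvert\Rm(g)\arrowvert^{k-1})$, and the final bound from the maximum principle is of the form $\sup_{B(p,r/2)}V\lesssim a\cdot\sup_{B(p,r)}\arrowvert\Rm(g)\arrowvert$ up to the geometric factors, that is, one extra power of the curvature per differentiation. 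This produces $\sup_{B(p,r/2)}\arrowvert\nabla^kT\arrowvert^2\lesssim\sup_{B(p,r)}\arrowvert T\arrowvert^2\,(1+\sup_{B(p,r)}\arrowvert\Rm(g)\arrowvert^{k})$, whence the claim after taking square roots and splitting the two regimes $\sup\arrowvert\Rm(g)\arrowvert\le1$ and $\sup\arrowvert\Rm(g)\arrowvert>1$ into the single factor $1+\sup\arrowvert\Rm(g)\arrowvert^{k/2}$. The delicate point throughout is to verify that the coupling terms $L_k$ and the errors introduced by the cut-off remain strictly lower order in this counting, so that they contribute only to the constant $C$ and not to the exponent.
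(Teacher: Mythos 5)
Your proposal is correct and takes essentially the same route as the paper's proof: induction on $k$ using the Shi-type auxiliary function $F=(U+a)V$ with the cut-off maximum principle of lemma \ref{elliptic}, the commutation identities of proposition \ref{id-op} for the Bochner inequality, and lemma \ref{elliptic} itself to turn the curvature-derivative coefficients into powers of $\sup_{B(p,r)}\arrowvert\Rm(g)\arrowvert$. The paper merely writes out the cases $k=1$ and $k=2$ explicitly (splitting the coupling terms via Young's inequality with exponents $2/3$, $4/3$, $1/2$, $3/2$ where you keep $L_k$ intact), and its explicit computations confirm exactly the power counting you describe, namely one factor of $\sup_{B(p,r)}\arrowvert\Rm(g)\arrowvert^{1/2}$ per derivative.
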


\begin{proof}
The proof is essentially the same as for lemma \ref{elliptic}. We only give the major steps for $k=1$ and $k=2$.

By the evolution equation for $T$ given by (\ref{evol-equ-gen-tens}), we estimate,
\begin{eqnarray*}
\Delta_f\arrowvert T\arrowvert^2&\geq&2\arrowvert\nabla T\arrowvert^2-(c(n)\arrowvert\Rm(g)\arrowvert+2\lambda)\arrowvert T\arrowvert^2.\\
\Delta_f\arrowvert\nabla T\arrowvert^2&\geq&2\arrowvert\nabla^2 T\arrowvert^2-(c(n)(\arrowvert\Rm(g)\arrowvert+\arrowvert\nabla\Rm(g)\arrowvert^{2/3})+2\lambda+1)\arrowvert\nabla T\arrowvert^2\\
&&-c\arrowvert\nabla\Rm(g)\arrowvert^{4/3}\arrowvert T\arrowvert^2.
\end{eqnarray*}
Adapting the proof of lemma \ref{elliptic}, with the same notations, one gets
\begin{eqnarray*}
\Delta_f F\geq V^2-c(\lambda,n)(1+(\arrowvert\Rm(g)\arrowvert+\arrowvert\nabla\Rm(g)\arrowvert^{2/3}))F-c(n)\arrowvert\nabla\Rm(g)\arrowvert^{4/3} a^2.
\end{eqnarray*}
We conclude by following the same procedure and by invoking the results of lemma \ref{elliptic}.

Concerning the estimates of $\nabla^2T$ :
\begin{eqnarray*}
\Delta_f\arrowvert\nabla^2T\arrowvert^2&\geq&2\arrowvert\nabla^3T\arrowvert^2-c(n,\lambda)\arrowvert\nabla^2T\arrowvert^2(1+\arrowvert\Rm(g)\arrowvert)\\
&&-c(n,\lambda)\arrowvert\nabla^2T\arrowvert\left(\arrowvert\nabla T\arrowvert\arrowvert\nabla\Rm(g)\arrowvert+\arrowvert\nabla^2\Rm(g)\arrowvert\arrowvert T\arrowvert\right)\\
&\geq&2\arrowvert\nabla^3T\arrowvert^2-\psi_1\arrowvert\nabla^2T\arrowvert^2-\psi_2.
\end{eqnarray*}
where 
\begin{eqnarray*}
\psi_1&:=&c(n,\lambda)(1+\arrowvert\Rm(g)\arrowvert+\arrowvert\nabla\Rm(g)\arrowvert^{2/3}+\arrowvert\nabla^2\Rm(g)\arrowvert^{1/2})\\
\psi_2&:=&c(n,\lambda)(\arrowvert\nabla T\arrowvert^2\arrowvert\nabla\Rm(g)\arrowvert^{4/3}+\arrowvert\nabla^2\Rm(g)\arrowvert^{3/2}\arrowvert T\arrowvert^{2}).
\end{eqnarray*}

Now, define $U:=\arrowvert\nabla T\arrowvert^2$, $V:=\arrowvert\nabla^2T\arrowvert^2$ and $W:=\arrowvert\nabla^3T\arrowvert^2$. Consider the function $F:=(U+a)V$ where $a:=c\sup_{B(p,r)}U$. $F$ satisfies,
\begin{eqnarray*}
\Delta_f F\geq V^2-\left(2\psi_1+\frac{\arrowvert\nabla\Rm(g)\arrowvert^{4/3}\arrowvert T\arrowvert^2}{a}\right)F-2a\psi_2.
\end{eqnarray*}

Therefore, as in the proof of lemma \ref{elliptic}, we obtain :
\begin{eqnarray*}
\sup_{B(p,r/2)}V\leq c \sup_{B(p,r)}\left[a\psi_1+\arrowvert\nabla\Rm(g)\arrowvert^{4/3}\arrowvert T\arrowvert^2+(a\psi_2)^{1/2}\right]
\end{eqnarray*}
\end{proof}

\subsection{A priori estimates for sub solutions of the weighted laplacian}
The purpose of this section is to prove the following lemma that is crucial for the rest of the paper.

\begin{lemma}\label{lemm-cruc}
Let $(M^n,g,\nabla f)$ be a normalized expanding gradient Ricci soliton such that $f$ is an exhausting function. Assume $u:M^n\rightarrow$ is a $C^2$ function that is a sub solution of the following weighted elliptic equation :
\begin{eqnarray}
\Delta_fu\geq-\lambda u-c_1v^{-\alpha}u-Q,\label{subsol-ell-wei}
\end{eqnarray}
where $\lambda\in\mathbb{R}$, $c_1,\alpha\in\mathbb{R}_+^*$ and $Q:M^n\rightarrow \mathbb{R}$ is a nonnegative function. Define $\tilde{\alpha}:=\min\{1,\alpha\}.$ Assume $\sup_{M^n}f^{\tilde{\alpha}}\arrowvert \R_g\arrowvert$ is finite. \\
\begin{enumerate}
\item If $Q\equiv0$, then there exists some positive height $t_0=t_0(n,\lambda,\alpha,c_1,\min_{M^n}f,\sup_{M^n}f^{\tilde{\alpha}}\arrowvert\R_g\arrowvert)$ and some positive constants $C_i=C_i(n,\lambda,\alpha,c_1,\min_{M^n}f,\sup_{M^n}f^{\tilde{\alpha}}\arrowvert\R_g\arrowvert)$ for $i=0,1$  such that for $t\geq t_0$, the function
\begin{eqnarray*}
v^{\lambda}e^{-C_0v^{-\tilde{\alpha}}}u-A_1v^{2\lambda-\frac{n}{2}}e^{-v-C_1v^{-\tilde{\alpha}}}:\{t_0\leq f\leq t\}\rightarrow \mathbb{R},
\end{eqnarray*}
attains its maximum on the boundary $\{f=t_0\}\cup\{f=t\}$ for any nonnegative constant $A_1$.\\

\item If $Q=\textit{O}(v^{-\beta})$, for some real number $\beta$ such that $\lambda<\beta$ then there exists some positive height $t_0=t_0(n,\lambda,\alpha,c_1,\beta,\sup_{M^n}f^{\tilde{\alpha}}\arrowvert\R_g\arrowvert,\min_{M^n}f)$ and some positive constants $C_i=C_i(n,\lambda,\alpha,c_1,\min_{M^n}f,\sup_{M^n}f^{\tilde{\alpha}}\arrowvert\R_g\arrowvert)$ for $i=0,1$ such that for $t\geq t_0$, the function
\begin{eqnarray*}
v^{\lambda}e^{-C_0v^{-\tilde{\alpha}}}u-A_0v^{\lambda-\beta}-A_1v^{2\lambda-\frac{n}{2}}e^{-v-C_1v^{-\tilde{\alpha}}}:\{t_0\leq f\leq t\}\rightarrow \mathbb{R},
\end{eqnarray*}
attains its maximum on the boundary $\{f=t_0\}\cup\{f=t\}$ for any nonnegative constant $A_1$, and $A_0\geq A_0(n,\lambda,c_1,\alpha,\beta,\sup_{M^n}(v^{\beta}Q),\min_{M^n}f)$.\\

\item If $Q=\textit{O}(v^{\beta}e^{-v})$, for some real number $\beta$ such that $\lambda>\beta+n/2$, then there exists some positive height $t_0=t_0(n,\lambda,\alpha,c_1,\beta,\sup_{M^n}f^{\tilde{\alpha}}\arrowvert\R_g\arrowvert,\min_{M^n}f)$ and some positive constants $C_i=C_i(n,\lambda,\alpha,c_1,\min_{M^n}f,\sup_{M^n}f^{\tilde{\alpha}}\arrowvert\R_g\arrowvert)$ for $i=0,1$  such that for $t\geq t_0$, the function
\begin{eqnarray*}
v^{\lambda}e^{-C_0v^{-\tilde{\alpha}}}u+A_0v^{\beta+\lambda}e^{-v}-A_1v^{2\lambda-\frac{n}{2}}e^{-v-C_1v^{-\tilde{\alpha}}}:\{t_0\leq f\leq t\}\rightarrow \mathbb{R},
\end{eqnarray*}
attains its maximum on the boundary $\{f=t_0\}\cup\{f=t\}$ for any nonnegative constant $A_1$ and $A_0\geq A_0(n,\lambda,c_1,\alpha,\beta,\sup_{M^n}(v^{-\beta}e^vQ),\min_{M^n}f).$

\end{enumerate}
\end{lemma}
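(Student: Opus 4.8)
The plan is to run the weak maximum principle on the compact annulus $\{t_0\le f\le t\}$ (compact because $f$ is exhausting), after conjugating $u$ by a radial weight. The two structural inputs are the identities for $v$: equation $(\ref{equ:8})$ gives $\Delta_fv=v$, while combining $(\ref{equ:3})$ with $v=f+\mu(g)+n/2$ yields the pointwise identity $\arrowvert\nabla v\arrowvert^2=\arrowvert\nabla f\arrowvert^2=v-n/2-\R_g$. Hence, for any $\phi\in C^2(\mathbb{R})$,
\[
\Delta_f(\phi\circ v)=v\,\phi'(v)+\left(v-\tfrac{n}{2}-\R_g\right)\phi''(v),
\]
which turns every computation below into a one--variable problem. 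I take as conjugating weight $w_0:=v^{\lambda}e^{-C_0v^{-\tilde\alpha}}>0$: the factor $v^{\lambda}$ is chosen to cancel the linear term $-\lambda u$, while the exponential correction is there to manufacture a positive lower--order term dominating the leftover negative contributions.

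Because I only control $\Delta_fu$ from one side and have no information on the sign of $\nabla u$, the cross term $\langle\nabla w_0,\nabla u\rangle$ cannot be estimated pointwise; the whole argument must be performed at a maximum point. Writing $H:=w_0u-B$ for the relevant barrier $B$ and evaluating at an interior maximum $x_0$, the relation $\nabla H(x_0)=0$ eliminates the cross term and, after inserting $(\ref{subsol-ell-wei})$ (licit since $w_0>0$), one gets
\[
\Delta_fH(x_0)\ \ge\ H(x_0)\,\mathcal{B}(x_0)+\big[\mathcal{R}(B)-w_0Q\big](x_0),
\]
where $\mathcal{B}:=\Delta_fw_0/w_0-\lambda-c_1v^{-\alpha}-2\arrowvert\nabla w_0\arrowvert^2/w_0^{2}$ and $\mathcal{R}(B):=B\,\mathcal{B}+2w_0^{-1}\langle\nabla w_0,\nabla B\rangle-\Delta_fB$. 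The entire proof is thereby reduced to two positivity statements on $\{f\ge t_0\}$: that $\mathcal{B}>0$ and that $\mathcal{R}(B)\ge w_0Q$. Granting both, an interior maximum would satisfy $\Delta_fH(x_0)\le0$ together with $\Delta_fH(x_0)\ge H(x_0)\mathcal{B}(x_0)$, forcing $H(x_0)\le0$; a positive interior maximum being thus impossible, the weak maximum principle shows that $H$ is maximal on $\{f=t_0\}\cup\{f=t\}$, as claimed.

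For $\mathcal{B}>0$ I expand, using the displayed formula for $\Delta_f(\phi\circ v)$,
\[
\mathcal{B}=C_0\tilde\alpha\,v^{-\tilde\alpha}-c_1v^{-\alpha}-(\lambda^2+\lambda)v^{-1}+(\textrm{lower-order terms}).
\]
The choice $\tilde\alpha=\min\{1,\alpha\}$ is exactly what makes the single positive term $C_0\tilde\alpha v^{-\tilde\alpha}$ dominate simultaneously $c_1v^{-\alpha}$ and $(\lambda^2+\lambda)v^{-1}$; the remaining terms, which all carry the factor $\R_g$, are controlled by the hypothesis $\sup_{M^n}f^{\tilde\alpha}\arrowvert\R_g\arrowvert<+\infty$ and are of strictly lower order. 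Choosing $C_0$ large and then $t_0$ large, both depending only on the quantities listed in the statement, secures $\mathcal{B}>0$ on $\{f\ge t_0\}$.

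It remains to produce, in each case, a barrier with $\mathcal{R}(B)\ge w_0Q$, and this dictates the form of $B$. In case $(1)$, where $Q\equiv0$, the only barrier is $A_1v^{2\lambda-n/2}e^{-v-C_1v^{-\tilde\alpha}}$, which is precisely $w_0$ times the subdominant homogeneous solution $v^{\lambda-n/2}e^{-v}$ of $\Delta_f\phi\approx-\lambda\phi$; the leading parts of $2w_0^{-1}\langle\nabla w_0,\nabla B\rangle$ and $-\Delta_fB$ then cancel, with $C_1$ tuned against $\sup_{M^n}f^{\tilde\alpha}\arrowvert\R_g\arrowvert$ so that the $\textit{O}(v^{-\tilde\alpha})$ remainder (into which $\R_g$ now enters at top order, the barrier being exponential) keeps $\mathcal{R}(B)\ge0$; since this barrier decays like $e^{-v}$ it is admissible for every $A_1\ge0$. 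In case $(2)$ I subtract in addition the polynomial barrier $A_0v^{\lambda-\beta}$: as $\lambda-\beta<0$ one has $-\Delta_f(A_0v^{\lambda-\beta})\approx(\beta-\lambda)A_0v^{\lambda-\beta}>0$, of the same order $v^{\lambda-\beta}$ as $w_0Q$, so that $A_0$ large (depending on $\sup_{M^n}v^{\beta}Q$) yields $\mathcal{R}(B)\ge w_0Q$. In case $(3)$ the source $w_0Q=\textit{O}(v^{\beta+\lambda}e^{-v})$ is compensated by the added exponential barrier $A_0v^{\beta+\lambda}e^{-v}$, the constraint $\lambda>\beta+n/2$ being exactly what forces its remainder to have the correct sign and to dominate $w_0Q$. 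The main obstacle is this bookkeeping: one must expand $\mathcal{B}$ and $\mathcal{R}(B)$ down to the order of the competing terms and verify that the single positive contribution $C_0\tilde\alpha v^{-\tilde\alpha}$, together with the tuned constants $C_0,C_1,A_0$, beats every negative term uniformly on $\{f\ge t_0\}$ --- in particular that the weight correction raising $\mathcal{B}$ does not at the same time destroy the barrier estimate $\mathcal{R}(B)\ge w_0Q$.
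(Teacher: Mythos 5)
Your weighting and barrier scheme is the same as the paper's: the conjugating weight $v^{\lambda}e^{-C_0v^{-\tilde\alpha}}$, the three barriers $A_1v^{2\lambda-n/2}e^{-v-C_1v^{-\tilde\alpha}}$, $A_0v^{\lambda-\beta}$, $A_0v^{\lambda+\beta}e^{-v}$, and the roles of $\tilde\alpha=\min\{1,\alpha\}$ and of $\sup_{M^n}f^{\tilde\alpha}\arrowvert\R_g\arrowvert$ all match, and your formulas for $\mathcal{B}$ and $\mathcal{R}(B)$ are correct, as are the one-variable identities for $\Delta_f(\phi\circ v)$. The divergence --- and the gap --- is in how the maximum principle is deployed.

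Your reduction is valid only at a critical point of $H$, and it carries the zeroth-order term $H(x_0)\mathcal{B}(x_0)$, whose sign is unknown. From $0\geq\Delta_fH(x_0)\geq H(x_0)\mathcal{B}(x_0)$ you can conclude only $H(x_0)\leq 0$: you have excluded a \emph{positive} interior maximum, nothing more. This is strictly weaker than the assertion of lemma \ref{lemm-cruc}: if $H\leq 0$ on the slice with its maximum attained only at an interior point (a scenario your argument does not rule out), then ``attains its maximum on the boundary'' is false. Your closing sentence, ``the weak maximum principle shows that $H$ is maximal on the boundary,'' is therefore a non sequitur, because you have no pointwise differential inequality for $H$ to which any maximum principle could be applied --- only an inequality at critical points with a zeroth-order term of ambiguous sign. (The weaker conclusion $\max H\leq\max\{0,\max_{\partial}H\}$ would in fact suffice for the paper's applications in corollary \ref{coro-cruc-tens}, but it is not what the lemma asserts.) The missing idea is precisely the step you declared impossible: the cross term does not need to be ``estimated pointwise'' or killed at a maximum point, because the identity
\begin{eqnarray*}
2\langle\nabla w_0,\nabla u\rangle=2\langle\nabla\ln w_0,\nabla(w_0u)\rangle-2\arrowvert\nabla\ln w_0\arrowvert^2\,(w_0u)
\end{eqnarray*}
is exact; it rewrites the cross term as a drift acting on $w_0u$ plus a zeroth-order term. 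Inserting it into (\ref{subsol-ell-wei}) gives the \emph{pointwise} linear inequality $\Delta_{f-2\ln w_0}(w_0u)\geq\mathcal{B}\,(w_0u)-w_0Q$ on all of $\{f\geq t_0\}$, and $f-2\ln w_0=f-2\lambda\ln v+2C_0v^{-\tilde\alpha}$ coincides, up to an additive constant, with the paper's weight $v-2\lambda\ln v+2C_0v^{-\tilde\alpha}$; this is exactly the paper's inequality (\ref{Generic-Q}). Discarding the term $\mathcal{B}\,(w_0u)\geq 0$ (here both your argument and the paper's silently use $u\geq 0$, which holds in every application via $u_{\epsilon}=\sqrt{\arrowvert T\arrowvert^2+\epsilon^2}$) and subtracting your barriers --- which you have already shown to be supersolutions of this same drift operator --- yields a subsolution of a linear drift Laplacian on the whole slice, with no zeroth-order term of ambiguous sign. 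The weak maximum principle for such operators then gives the boundary-maximum statement for every $A_1\geq0$ and irrespective of the sign of the function. With your max-point step replaced by this drift absorption, your two positivity computations do complete the proof.
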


\begin{proof}
We first absorb the linear term on the right hand side of (\ref{subsol-ell-wei}) :
\begin{eqnarray*}
\Delta_f(v^{\lambda}u)\geq-\left(\lambda(\lambda+1)\arrowvert\nabla\ln v\arrowvert^2+c_1v^{-\alpha}\right)(v^{\lambda} u)-v^\lambda Q+2\lambda<\nabla\ln v,\nabla(v^{\lambda}u)>,
\end{eqnarray*}
i.e.
\begin{eqnarray*}
\Delta_{v-2\lambda\ln v}(v^{\lambda}u)&\geq&-\left(\lambda(\lambda+1)\arrowvert\nabla\ln v\arrowvert^2+c_1v^{-\alpha}\right)(v^{\lambda} u)-v^\lambda Q\\
&\geq&-\frac{C(\lambda,c_1,\min_{M^n}v)}{v^{\min\{\alpha,1\}}}(v^{\lambda}u)-v^\lambda Q.
\end{eqnarray*}
Then, if $\tilde{\alpha}:=\min\{\alpha,1\}$, multiply the previous differential inequality by a function of the form $e^{-Cv^{-\tilde{\alpha}}}$ to get,
\begin{eqnarray*}
\Delta_{v-2\lambda\ln v+2Cv^{-\tilde{\alpha}}}\left(v^{\lambda}e^{-Cv^{-\tilde{\alpha}}}u\right)&\geq& \Delta_{v-2\lambda\ln v}\left(e^{-Cv^{-\tilde{\alpha}}}\right)\cdot(v^{\lambda}u)-2C^2\arrowvert\nabla v^{-\tilde{\alpha}}\arrowvert^2(v^{\lambda}e^{-Cv^{-\tilde{\alpha}}}u)\\
&&-\frac{C(\lambda,c_1,\min_{M^n}v)}{v^{\tilde{\alpha}}}\left(v^{\lambda}e^{-Cv^{-\tilde{\alpha}}}u\right)-v^\lambda e^{-Cv^{-\tilde{\alpha}}} Q.
\end{eqnarray*}
Now,
\begin{eqnarray*}
 \Delta_{v-2\lambda\ln v}\left(e^{-Cv^{-\tilde{\alpha}}}\right)&=&\left(-C\Delta_{v-2\lambda\ln v}v^{-\tilde{\alpha}}+C^2\arrowvert\nabla v^{-\tilde{\alpha}}\arrowvert^2\right)e^{-Cv^{-\tilde{\alpha}}}\\
 &=&\frac{C}{v^{\tilde{\alpha}}}\left(\tilde{\alpha}(1+[\tilde{\alpha}+1-2\lambda+C\tilde{\alpha}v^{-\tilde{\alpha}}]\arrowvert\nabla\ln v\arrowvert^2)\right)e^{-Cv^{-\tilde{\alpha}}}\\
 &\geq&\frac{C\tilde{\alpha}}{2v^{\tilde{\alpha}}}e^{-Cv^{-\tilde{\alpha}}},
 \end{eqnarray*}
for $v\geq v_0(C,\min_{M^n}v,\alpha).$
Therefore,
\begin{eqnarray}
\Delta_{v-2\lambda\ln v+2Cv^{-\tilde{\alpha}}}\left(v^{\lambda}e^{-Cv^{-\tilde{\alpha}}}u\right)&\geq&\left(\frac{C\tilde{\alpha}}{2}-C(\lambda,c_1,\min_{M^n}v)\right)v^{-\tilde{\alpha}}\left(v^{\lambda}e^{-Cv^{-\tilde{\alpha}}}u\right)\\
&&-2C^2\arrowvert\nabla v^{-\tilde{\alpha}}\arrowvert^2(v^{\lambda}e^{-Cv^{-\tilde{\alpha}}}u)-v^\lambda e^{-Cv^{-\tilde{\alpha}}} Q,\\
\Delta_{v-2\lambda\ln v+2Cv^{-\tilde{\alpha}}}\left(v^{\lambda}e^{-Cv^{-\tilde{\alpha}}}u\right)&>&-v^\lambda e^{-Cv^{-\tilde{\alpha}}} Q,\label{Generic-Q}
\end{eqnarray}
for $C=C(\lambda,c_1,\alpha,\min_{M^n}v)$ and $v\geq v_0(\lambda,c_1,\min_{M^n}v,\alpha).$
\begin{enumerate}
\item If $Q\equiv0$, then
\begin{eqnarray}
\Delta_{v-2\lambda\ln v+2Cv^{-\tilde{\alpha}}}\left(v^{\lambda}e^{-Cv^{-\tilde{\alpha}}}u\right)&>&0,\label{Q-vanish}
\end{eqnarray}
for $C=C(\lambda,c_1,\alpha,\min_{M^n}v)$ and $v\geq v_0(\lambda,c_1,\min_{M^n}v,\alpha).$
Now, we compute the weighted laplacian of a function of the form $v^{\gamma}e^{-v}$ with $\gamma\in\mathbb{R}$ :

\begin{eqnarray*}
\Delta_{v-2\lambda \ln v}v^{\gamma}&=&\left(\gamma v^{\gamma -1}\Delta_{v-2\lambda \ln v}v+\gamma(\gamma-1)\arrowvert\nabla\ln v\arrowvert^2v^{\gamma}\right)\\
&=&\gamma v^{\gamma}\left(1+(\gamma-1-2\lambda)\arrowvert\nabla\ln v\arrowvert^2\right),\\
\Delta_{v-2\lambda \ln v}e^{-v}&=&(-\Delta_{v-2\lambda \ln v} v +\arrowvert\nabla v\arrowvert^2)e^{-v}=\left(-\Delta v+2\lambda\frac{\arrowvert\nabla v\arrowvert^2}{v}\right)e^{-v}\\
&=&\left(-\R_g-\lambda\frac{2\R_g+n}{v}+2\lambda-\frac{n}{2}\right)e^{-v},\\
\Delta_{v-2\lambda \ln v}(v^{\gamma}e^{-v})&=&\gamma v^{\gamma}e^{-v}\left(1+(\gamma-1-2\lambda)\arrowvert\nabla\ln v\arrowvert^2\right)-2\gamma v^{\gamma}e^{-v}\frac{\arrowvert\nabla v\arrowvert^2}{v}\\
&&+\left(-\R_g-\lambda\frac{2\R_g+n}{v}+2\lambda-\frac{n}{2}\right)v^{\gamma}e^{-v}\\
&=&\left(2\lambda-\frac{n}{2}-\gamma-\R_g+(\gamma-\lambda)\frac{2\R_g+n}{v}+\gamma(\gamma-1-2\lambda)\arrowvert\nabla\ln v\arrowvert^2\right)v^{\gamma}e^{-v}.\\
<\nabla v^{-\tilde{\alpha}},\nabla v^{\gamma}e^{-v}>&\leq&\frac{C(\gamma,\min_{M^n}v)}{v^{\tilde{\alpha}}}v^{\gamma}e^{-v}.
\end{eqnarray*}
Therefore, if $\gamma:=2\lambda-n/2$, then,
\begin{eqnarray}
\Delta_{v-2\lambda \ln v+2Cv^{-\tilde{\alpha}}}(v^{2\lambda-n/2}e^{-v})\leq \frac{C_1}{v^{\tilde{\alpha}}}v^{2\lambda-n/2}e^{-v},\label{spe-fct-sub-ell}
\end{eqnarray}
for some positive constant $C_1=C_1(C,\sup_{M^n}v^{\tilde{\alpha}}\arrowvert\R_g\arrowvert,n,\min_{M^n}f)$. Now, 
\begin{eqnarray*}
\Delta_{v-2\lambda \ln v+2Cv^{-\tilde{\alpha}}}(v^{2\lambda-n/2}e^{-v}e^{-2C_2v^{-\tilde{\alpha}}})\leq\left(\frac{C_1-C_2}{v^{\tilde{\alpha}}}\right)v^{2\lambda-n/2}e^{-v}e^{-2C_2v^{-\tilde{\alpha}}},
\end{eqnarray*}
for any positive constant $C_2$, for $v\geq v_0(C_2,C,n,\min_{M^n}v)$ since,
\begin{eqnarray*}
\Delta_{v-2\lambda \ln v+2Cv^{-\tilde{\alpha}}}\left(e^{-2C_2v^{-\tilde{\alpha}}}\right)&\leq&\frac{C_2\tilde{\alpha}}{v^{\tilde{\alpha}}}e^{-2C_2v^{-\tilde{\alpha}}},\\
2<\nabla (v^{2\lambda-n/2}e^{-v}),\nabla e^{-2C_2v^{-\tilde{\alpha}}}>&\leq&-\frac{2C_2\tilde{\alpha}}{v^{\tilde{\alpha}}}v^{2\lambda-n/2}e^{-v}e^{-2C_2v^{-\tilde{\alpha}}}.
\end{eqnarray*}

Choose $C_2=C_1/{\tilde{\alpha}}$ such that  
\begin{eqnarray}
\Delta_{v-2\lambda \ln v+2Cv^{-\tilde{\alpha}}}(v^{2\lambda-n/2}e^{-v}e^{-2C_1v^{-\tilde{\alpha}}})\leq 0,\label{spe-fct}
\end{eqnarray}
for $v\geq v_0(C,\sup_{M^n}v^{\tilde{\alpha}}\arrowvert\R_g\arrowvert)$.\\
Combining the two differential inequalities (\ref{Q-vanish}) and (\ref{spe-fct}), one gets for any nonnegative constant $A_1$, for $v\geq v_0(n,\lambda,c_1,\min_{M^n}v,\alpha,\sup_{M^n}v^{\tilde{\alpha}}\arrowvert\R_g\arrowvert)$,
\begin{eqnarray}
\Delta_{v-2\lambda \ln v+2Cv^{-\tilde{\alpha}}}\left(v^{\lambda}e^{-Cv^{-\tilde{\alpha}}}u-A_1v^{2\lambda-n/2}e^{-v}e^{-C_1v^{-\tilde{\alpha}}}\right)> 0,\label{Q-vanish-bis}
\end{eqnarray}
where $C=C(n,\lambda,c_1,\alpha,\min_{M^n}v)$ and $C_1=C_1(n,\lambda,c_1,\alpha,\min_{M^n}v,\sup_{M^n}v^{\tilde{\alpha}}\arrowvert\R_g\arrowvert)$.
Hence the result by applying the maximum principle on a slice $\{v_0\leq v\leq t\}$.\\

\item If $Q=\textit{O}(v^{-\beta})$ with $\lambda<\beta$, then, thanks to (\ref{Generic-Q}) and (\ref{Q-vanish-bis}),
\begin{eqnarray*}
\Delta_{v-2\lambda\ln v+2Cv^{-\tilde{\alpha}}}\left(v^{\lambda}e^{-Cv^{-\tilde{\alpha}}}u-A_1v^{2-n/2}e^{-v}e^{-C_1v^{-\tilde{\alpha}}}\right)&\geq&-\tilde{Q}v^{\lambda-\beta} e^{-Cv^{-\tilde{\alpha}}},\\
\end{eqnarray*}
where $\tilde{Q}:=\sup_{M^n}v^{\beta}Q,$ for $v\geq v_0(n,\lambda,c_1,\alpha,\sup_{M^n}(v^{\tilde{\alpha}}\arrowvert\R_g\arrowvert),\min_{M^n}v)$.
 We notice that, for $\gamma>0$,
\begin{eqnarray*}
\Delta_{v-2\lambda\ln v+2Cv^{-\tilde{\alpha}}}\left(v^{-\gamma}\right)\leq -\frac{\gamma}{2}v^{-\gamma},
\end{eqnarray*}
for $v\geq v_0(n,\lambda,C,\alpha,\gamma,\min_{M^n}v)$. Therefore, if $\gamma:=\beta-\lambda$,
\begin{eqnarray*}
\Delta_{v-2\lambda\ln v+2Cv^{-\tilde{\alpha}}}\left(v^{\lambda}e^{-Cv^{-\tilde{\alpha}}}u-A_1v^{2\lambda-n/2}e^{-v}e^{-C_1v^{-\tilde{\alpha}}}-A_0v^{\lambda-\beta}\right)>0,
\end{eqnarray*}
for 
\begin{eqnarray*}
&&v\geq v_0(n,\lambda,c_1,\alpha,\beta,\sup_{M^n}(v^{\tilde{\alpha}}\arrowvert\R_g\arrowvert),\min_{M^n}v),\\
&&A_0\geq A_0(n,\lambda,c_1,\alpha,\beta,\sup_{M^n}(v^{\beta}Q),\min_{M^n}v),\\
&&A_1\geq0.
\end{eqnarray*}
Again, applying the maximum principle on a slice $\{v_0\leq v\leq t\}$ gives the result.\\
\item If $Q=\textit{O}(v^{\beta}e^{-v})$ with $\beta+n/2<\lambda$, then,
\begin{eqnarray*}
\Delta_{v-2\lambda\ln v+2Cv^{-\tilde{\alpha}}}\left(v^{\lambda}e^{-Cv^{-\tilde{\alpha}}}u-A_1v^{2\lambda-n/2}e^{-v}e^{-C_1v^{-\tilde{\alpha}}}\right)&\geq&-\tilde{Q}v^{\lambda+\beta} e^{-v-Cv^{-\tilde{\alpha}}},\\
\end{eqnarray*}
 where $\tilde{Q}:=\sup_{M^n}v^{-\beta}e^vQ,$ for $v\geq v_0(n,\lambda,c_1,\alpha,\sup_{M^n}(v^{\tilde{\alpha}}\arrowvert\R_g\arrowvert),\min_{M^n}v)$.
A similar computation leading to the estimate (\ref{spe-fct-sub-ell}) gives
\begin{eqnarray*}
\Delta_{v-2\lambda\ln v+2Cv^{-\tilde{\alpha}}}\left(v^{\lambda+\beta}e^{-v}\right)\geq \frac{\lambda-\beta-n/2}{2}v^{\lambda+\beta}e^{-v},
\end{eqnarray*}
for $v\geq v_0(n,\lambda,C,\alpha,\lambda,\beta,\sup_{M^n}(v^{\tilde{\alpha}}\arrowvert\R_g\arrowvert),\min_{M^n}v)$. Therefore,
\begin{eqnarray*}
\Delta_{v-2\lambda\ln v+2Cv^{-\tilde{\alpha}}}\left(v^{\lambda}e^{-Cv^{-\tilde{\alpha}}}u-A_1v^{2\lambda-n/2}e^{-v}e^{-C_1v^{-\tilde{\alpha}}}+A_0v^{\lambda+\beta}e^{-v}\right)\geq 0,
\end{eqnarray*}
for 
\begin{eqnarray*}
&&v\geq v_0(n,\lambda,c_1,\alpha,\beta,\sup_{M^n}(v^{\tilde{\alpha}}\arrowvert\R_g\arrowvert),\min_{M^n}v),\\
&&A_0\geq A_0(n,\lambda,c_1,\alpha,\beta,\sup_{M^n}(v^{-\beta}e^vQ),\min_{M^n}v),\\
&&A_1\geq0.
\end{eqnarray*}

\end{enumerate}
\end{proof}

The main application of lemma \ref{lemm-cruc} is the following corollary dealing with tensors :

\begin{coro}\label{coro-cruc-tens}
Let $(M^n,g,\nabla f)$ be a normalized expanding gradient Ricci soliton such that $f$ is an exhausting function. Assume there exists a $C^2$ tensor $T$ such that
\begin{eqnarray*}
\Delta_fT=-\lambda T+\Rm(g)\ast T,
\end{eqnarray*}
for some real number $\lambda$. Assume $R_0:=\sup_{M^n}\left(v^{\min\{\alpha,1\}}\arrowvert\Rm(g)\arrowvert\right)$ is finite, for some $\alpha>0$. 
\begin{enumerate}
\item Then,
\begin{eqnarray*}
&&\sup_{M^n}v^{\lambda}\arrowvert T\arrowvert\leq  C\left(\limsup_{+\infty}v^{\lambda}\arrowvert T\arrowvert+\sup_{v\leq v_0}\arrowvert T\arrowvert\right),\\
&&C=C(n,\lambda,\alpha,R_0,\min_{M^n}f),\\
&&v_0=v_0(n,\lambda,\alpha,R_0,\min_{M^n}f).
\end{eqnarray*}

Moreover, if $\limsup_{+\infty}v^{\lambda}\arrowvert T\arrowvert=0$ then
\begin{eqnarray*}
&&\sup_{M^n}v^{\frac{n}{2}-\lambda}e^v\arrowvert T\arrowvert\leq C\left(n,\lambda,\alpha,R_0,\min_{M^n}f,\sup_{v\leq v_0}\arrowvert T\arrowvert\right),\\
&&v_0=v_0(n,\lambda,\alpha,R_0,\min_{M^n}f).\\
\end{eqnarray*}

\item If $\sup_{M^n}v^{\min\{\alpha,1\}+i/2}\arrowvert\nabla^i\Rm(g)\arrowvert=:R_i^{\alpha}$ and $\limsup_{+\infty}v^{\lambda+i/2}\arrowvert\nabla^iT\arrowvert=:T_i$ are finite for $i=0,...,k$ then
\begin{eqnarray*}
\sup_{M^n}v^{\lambda+k/2}\arrowvert\nabla^kT\arrowvert\leq C\left(n,k,\lambda,\alpha,(R_i^{\alpha})_{0\leq i\leq k},\sup_{v\leq v_0}\arrowvert T\arrowvert,(T_i)_{0\leq i\leq k},\min_{M^n}f\right),
\end{eqnarray*}
where 
\begin{eqnarray*}
v_0=v_0\left(n,k,\lambda,\alpha,(R_i^{\alpha})_{0\leq i\leq k},\min_{M^n}f\right).\\
\end{eqnarray*}
\item If $T_0=0$ and if $R_0^{\alpha}$ is finite then
\begin{eqnarray*}
&&\sup_{M^n}v^{-\lambda-\frac{k}{2}+\frac{n}{2}}e^{v}\arrowvert\nabla^kT\arrowvert\leq C\left(n,k,\alpha,\lambda,R_0^{\alpha},\sup_{v\leq v_0}\arrowvert T\arrowvert,\min_{M^n}f\right),\\
&&v_0=v_0\left(n,k,\lambda,\alpha,R_0^{\alpha},\min_{M^n}f\right).
\end{eqnarray*}
\end{enumerate}
\end{coro}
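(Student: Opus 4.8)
The plan is to reduce everything to the scalar maximum principle of Lemma \ref{lemm-cruc} applied to the norm $u:=|T|$ (resp. $u:=|\nabla^kT|$), and then to bootstrap in $k$. For part (1), I would first turn the tensorial equation into a scalar differential inequality: pairing $\Delta_fT=-\lambda T+\Rm(g)\ast T$ with $T$ and using the refined Kato inequality $|\nabla T|^2\geq|\nabla|T||^2$ (working with $\sqrt{|T|^2+\varepsilon^2}$ and letting $\varepsilon\to0$ to avoid the zero set of $T$), one obtains, wherever $|T|>0$, the inequality $\Delta_f|T|\geq-\lambda|T|-c(n)|\Rm(g)||T|$. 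Since by hypothesis $|\Rm(g)|\leq R_0v^{-\tilde{\alpha}}$ with $\tilde{\alpha}:=\min\{\alpha,1\}$, this is exactly of the form (\ref{subsol-ell-wei}) with $c_1:=c(n)R_0$ and $Q\equiv0$. Applying Lemma \ref{lemm-cruc}(1) with $A_1=0$ and letting the height $t\to+\infty$, the maximum principle on the slices $\{t_0\leq f\leq t\}$ forces $v^{\lambda}e^{-C_0v^{-\tilde{\alpha}}}|T|$ to be controlled by its values on $\{f=t_0\}$ and by $\limsup_{+\infty}v^{\lambda}|T|$; combining with the compact region $\{v\leq v_0\}$ gives the first bound. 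When $\limsup_{+\infty}v^{\lambda}|T|=0$, I would instead keep $A_1>0$, choose it so large that the adjusted function is nonpositive on $\{f=t_0\}$, and note that the outer boundary contribution $v^{\lambda}e^{-C_0v^{-\tilde{\alpha}}}|T|$ tends to $0$ in the limit; this yields $|T|\leq Cv^{\lambda-n/2}e^{-v}$, i.e. the sharp exponential bound. It is essential here to run the argument on $u=|T|$ rather than $|T|^2$, since only the former produces the correct exponent $n/2-\lambda$ and the full rate $e^{-v}$.

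For part (2) I would argue by induction on $k$. Differentiating the equation $k$ times and invoking the commutation identities (\ref{comm-1}) and (\ref{comm-2}) of Proposition \ref{id-op}, one gets schematically $\Delta_f\nabla^kT=-(\lambda+\tfrac{k}{2})\nabla^kT+\Rm(g)\ast\nabla^kT+\sum_{i=0}^{k-1}\nabla^{k-i}\Rm(g)\ast\nabla^iT$, so that $u:=|\nabla^kT|$ satisfies $\Delta_fu\geq-(\lambda+\tfrac{k}{2})u-c(n)R_0v^{-\tilde{\alpha}}u-Q$ with $Q:=c(n)\sum_{i=0}^{k-1}|\nabla^{k-i}\Rm(g)||\nabla^iT|$. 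Feeding the inductive bounds $|\nabla^iT|\leq C_iv^{-\lambda-i/2}$ and the hypotheses $|\nabla^{k-i}\Rm(g)|\leq R_{k-i}^{\alpha}v^{-\tilde{\alpha}-(k-i)/2}$ shows $Q=\textit{O}(v^{-(\lambda+k/2)-\tilde{\alpha}})$, i.e. $Q=\textit{O}(v^{-\beta})$ with $\beta:=\lambda+\tfrac{k}{2}+\tilde{\alpha}>\lambda+\tfrac{k}{2}$. Thus Lemma \ref{lemm-cruc}(2), applied with the shifted exponent $\lambda+\tfrac{k}{2}$ in place of $\lambda$, applies; after the maximum principle and $t\to+\infty$ (using finiteness of $T_k=\limsup_{+\infty}v^{\lambda+k/2}|\nabla^kT|$) I would obtain the desired bound. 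To express the constant in terms of $\sup_{v\leq v_0}|T|$ alone rather than of $\sup_{v\leq v_0}|\nabla^kT|$, I would control the inner boundary by the interior estimates of Lemma \ref{loc-est-tensor-egs}.

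Part (3) follows the same inductive scheme but with the exponentially decaying barriers of Lemma \ref{lemm-cruc}(3). Assuming $T_0=0$, part (1) already gives $|T|\leq Cv^{\lambda-n/2}e^{-v}$; since $R_0^{\alpha}$ is finite, Lemma \ref{elliptic} furnishes polynomial bounds on all $|\nabla^i\Rm(g)|$, and Lemma \ref{loc-est-tensor-egs} propagates the exponential decay of $T$ to every $\nabla^kT$, so that $v^N|\nabla^kT|\to0$ for all $N$ and the outer boundary term is harmless. Inductively assuming $|\nabla^iT|\leq C_iv^{\lambda+i/2-n/2}e^{-v}$ for $i<k$, the error $Q:=c(n)\sum_{i=0}^{k-1}|\nabla^{k-i}\Rm(g)||\nabla^iT|$ is $\textit{O}(v^{\beta}e^{-v})$ for some $\beta<\lambda+\tfrac{k}{2}-\tfrac{n}{2}$, the exponential factor dominating any polynomial growth of $|\nabla^{k-i}\Rm(g)|$; hence $\lambda+\tfrac{k}{2}>\beta+\tfrac{n}{2}$ and Lemma \ref{lemm-cruc}(3) with exponent $\lambda+\tfrac{k}{2}$ applies, yielding $\sup_{M^n}v^{-\lambda-k/2+n/2}e^v|\nabla^kT|<+\infty$.

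The main obstacle is not any single estimate but the bookkeeping: one must verify at each stage that the accumulated error term $Q$ coming from the lower-order covariant derivatives has precisely the decay ($v^{-\beta}$ with $\beta>\lambda'$ in part (2), and $v^{\beta}e^{-v}$ with $\beta+n/2<\lambda'$ in part (3)) required to match the correct case of Lemma \ref{lemm-cruc}, and that the shift $\lambda\mapsto\lambda+k/2$ produced by the commutators (\ref{comm-1}) is tracked consistently. The only genuinely analytic subtlety is the passage from the tensor equation to the scalar inequality for $|T|$ across the zero set of $T$, which is handled by the Kato inequality and an $\varepsilon$-regularization.
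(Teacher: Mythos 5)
Your proposal is correct and follows essentially the same route as the paper's own proof: regularize $\arrowvert T\arrowvert$ by $\sqrt{\arrowvert T\arrowvert^2+\epsilon^2}$, feed the resulting scalar inequality into Lemma \ref{lemm-cruc} (case (1) for the sup bound and its exponential improvement, case (2) with the commutator-shifted exponent $\lambda+k/2$ and $\beta=\lambda+k/2+\tilde{\alpha}$ for part (2), case (3) for part (3)), and use the local estimates of Lemma \ref{loc-est-tensor-egs} to replace $\sup_{v\leq v_0}\arrowvert\nabla^kT\arrowvert$ by $\sup_{v\leq v_0}\arrowvert T\arrowvert$ on the inner boundary. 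One caution on part (3): your justification that the exponential factor dominates \emph{any} polynomial growth of $\arrowvert\nabla^{k-i}\Rm(g)\arrowvert$ is not sufficient, because Lemma \ref{lemm-cruc}(3) requires the polynomial prefactor $\beta$ of $Q$ to satisfy $\beta+n/2<\lambda+k/2$, so arbitrary polynomial growth of the curvature derivatives would degrade the exponent at each inductive step; what actually closes the induction (and what the paper implicitly relies on) is that Lemma \ref{elliptic} yields \emph{uniformly bounded} covariant derivatives of the curvature once $\arrowvert\Rm(g)\arrowvert$ is bounded, giving $\beta=\lambda+(k-1)/2-n/2$, which meets the required threshold.
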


\begin{proof}
\begin{enumerate}
\item We want to apply lemma \ref{lemm-cruc} to the function $u_{\epsilon}:=\sqrt{\arrowvert T\arrowvert^2+\epsilon^2}$. Indeed, $u_{\epsilon}$ satisfies
\begin{eqnarray*}
\Delta_fu_{\epsilon}\geq-\lambda u_{\epsilon}-c_1v^{-\alpha}u_{\epsilon}.
\end{eqnarray*}
Therefore, there exists some positive height $v_0=v_0(n,\lambda,\alpha,R_0^{\alpha},\min_{M^n}f)$ and some positive constants $C_i=C_i(n,\lambda,\alpha,R_0^{\alpha},\min_{M^n}f)$ for $i=0,1$  such that for $t\geq v_0$, the function
\begin{eqnarray*}
v^{\lambda}e^{-C_0v^{-\tilde{\alpha}}}u_{\epsilon}-A_1v^{2\lambda-\frac{n}{2}}e^{-v-C_1v^{-\tilde{\alpha}}}:\{v_0\leq f\leq t\}\rightarrow \mathbb{R},
\end{eqnarray*}
attains its maximum on the boundary $\{v=v_0\}\cup\{v=t\}$ for any nonnegative constant $A_1$. If we choose $A_1$ large enough (independent of $\epsilon\leq 1$) such that
\begin{eqnarray*}
&&\sup_{v\leq v_0}\left(v^{\lambda}e^{-C_0v^{-\tilde{\alpha}}}u_{\epsilon}-A_1v^{2\lambda-\frac{n}{2}}e^{-v-C_1v^{-\tilde{\alpha}}}\right)\leq 0,\\
&&A_1=A_1(n,\lambda,\alpha,R_0^{\alpha},\min_{M^n}f,\sup_{v\leq v_0}u).
\end{eqnarray*}

Hence the result by letting $\epsilon$ go to $0$ and then letting $v=t$ go to $+\infty$.\\

\item 
We are now in a position to prove the second part of corollary \ref{coro-cruc-tens}. Indeed, by using (\ref{comm-1}) and (\ref{comm-2}) of proposition \ref{id-op}, for $k\geq 1$,
\begin{eqnarray*}
\Delta_f(\nabla^kT)&=&\nabla^k\Delta_fT+[\Delta_f,\nabla^k]T\\
&=&-\left(\lambda+\frac{k}{2}\right)\nabla^kT+\Rm(g)\ast\nabla^kT+\sum_{i=0}^{k-1}\nabla^{k-i}\Rm(g)\ast\nabla^i T\\
&=&-\left(\lambda+\frac{k}{2}\right)\nabla^kT+\Rm(g)\ast\nabla^kT+Q_k,
\end{eqnarray*}
where $Q_k:=\sum_{i=0}^{k-1}\nabla^{k-i}\Rm(g)\ast\nabla^i T$. By induction on $k$, $k\geq2$ (the proof of case $k=1$ is straightforward )
\begin{eqnarray*}
&&\sup_{M^n}v^{\lambda+\frac{k}{2}+\tilde{\alpha}}\arrowvert Q_k\arrowvert =C\left(n,k,\lambda,\alpha,(R_i^{\alpha})_{0\leq i\leq k},\sup_{v\leq v_{k-1}}\arrowvert T\arrowvert,(T_i)_{0\leq i\leq k-1},\min_{M^n}f\right),\\
&&v_{k-1}=v_{k-1}\left(n,k,\lambda,\alpha,(R_i^{\alpha})_{0\leq i\leq k-1},\min_{M^n}f\right).
\end{eqnarray*}
By lemma \ref{lemm-cruc}, as $\lambda+k/2<\lambda+k/2+\tilde{\alpha}$, there exists some positive height $$v_k=v_k(n,k,\lambda,\alpha,R_0^{\alpha}),\min_{M^n}f)$$ and some positive constants $C_i=C_i(n,\lambda,\alpha,R_0^{\alpha},\min_{M^n}f)$ for $i=0,1$ such that for $t\geq v_k$, the function
\begin{eqnarray*}
v^{\lambda+\frac{k}{2}}e^{-C_0v^{-\tilde{\alpha}}}\arrowvert\nabla^kT\arrowvert-A_0v^{-\frac{k}{2}-\tilde{\alpha}}-A_1v^{2\lambda+k-\frac{n}{2}}e^{-v-C_1v^{-\tilde{\alpha}}}:\{v_k\leq v\leq t\}\rightarrow \mathbb{R},
\end{eqnarray*}
attains its maximum on the boundary $\{v=v_k\}\cup\{v=t\}$ for any nonnegative constant $A_1$, and $A_0\geq A_0(n,k,\lambda,R_0^{\alpha},\alpha,\sup_{M^n}(v^{\lambda+\frac{k}{2}+\tilde{\alpha}}\arrowvert Q_k\arrowvert),\min_{M^n}f)$. For $A_0$ large enough, one can ensure that
\begin{eqnarray*}
&&\sup_{v=v_k}\left(v^{\lambda+\frac{k}{2}}e^{-C_0v^{-\tilde{\alpha}}}\arrowvert\nabla^kT\arrowvert-A_0v^{-\frac{k}{2}-\tilde{\alpha}}\right)\leq 0,\\
&& A_0\geq C(n,k,\lambda,R_0^{\alpha},\alpha,\sup_{M^n}(v^{\lambda+\frac{k}{2}+\tilde{\alpha}}\arrowvert Q_k\arrowvert),\min_{M^n}f)\sup_{v\leq v_k}\arrowvert\nabla^kT\arrowvert.
\end{eqnarray*}
Therefore, if $t$ goes to $+\infty$, 
\begin{eqnarray*}
&&\sup_{M^n}v^{\lambda+\frac{k}{2}+\tilde{\alpha}}\arrowvert\nabla^kT\arrowvert\leq C(\limsup_{+\infty}v^{\lambda+\frac{k}{2}+\tilde{\alpha}}\arrowvert\nabla^kT\arrowvert+\sup_{v\leq v_k}\arrowvert\nabla^k T\arrowvert),\\
&&C=C\left(n,k,\lambda,\alpha,(R_i^{\alpha})_{0\leq i\leq k},\sup_{v\leq v_{k-1}}\arrowvert T\arrowvert,(T_i)_{0\leq i\leq k},\sup_{v\leq v_{k}}\arrowvert \nabla^kT\arrowvert,\min_{M^n}f\right),\\
&&v_k=v_k\left(n,k,\lambda,\alpha,(R_i^{\alpha})_{0\leq i\leq k},\min_{M^n}f\right).
\end{eqnarray*}
Now, by the local estimates given by lemma \ref{loc-est-tensor-egs},
\begin{eqnarray*}
\sup_{v\leq v_k}\arrowvert\nabla^k T\arrowvert&\leq& C(n,k,\lambda,\alpha,\sup_{v\leq 2v_k}\arrowvert\Rm(g)\arrowvert,\min_{M^n}f) \sup_{v\leq 2v_k}\arrowvert T\arrowvert.\\
&\leq& C(n,k,\lambda,\alpha,R_0^{\alpha},\min_{M^n}f) \sup_{v\leq 2v_k}\arrowvert T\arrowvert.
\end{eqnarray*}
which concludes the proof.\\

\item The proof of $(3)$ is similar. Here, by induction on $k$,
\begin{eqnarray*}
&&\sup_{M^n}\left(v^{\frac{n}{2}-\lambda-\frac{k-1}{2}}e^v\arrowvert Q_k\arrowvert\right)\leq C\left(n,k,\lambda,\alpha,\sup_{v\leq v_{k-1}}\arrowvert T\arrowvert,R_0^{\alpha},\min_{M^n}f\right),\\
&&v_{k-1}=v_{k-1}\left(n,k,\lambda,\alpha,R_0^{\alpha},\min_{M^n}f\right).
\end{eqnarray*}
According to lemma \ref{lemm-cruc}, since trivially, $$\lambda+\frac{k}{2}>\lambda+\frac{k-1}{2},$$
there exists some positive height $$v_k=v_k(n,k,\lambda,\alpha,R_0^{\alpha},\min_{M^n}f)$$ and some positive constants $C_i=C_i(n,\lambda,\alpha,R_0^{\alpha},\min_{M^n}f)$ for $i=0,1$  such that for $t\geq v_k$, the function defined on $\{v_k\leq v\leq t\}$,
\begin{eqnarray*}
v^{\lambda+\frac{k}{2}}e^{-C_0v^{-\tilde{\alpha}}}\arrowvert\nabla^kT\arrowvert+A_0v^{2\lambda+k-\frac{n+1}{2}}e^{-v}-A_1v^{2\lambda+k-\frac{n}{2}}e^{-v-C_1v^{-\tilde{\alpha}}},
\end{eqnarray*}
attains its maximum on the boundary $\{v=v_k\}\cup\{v=t\}$ for any nonnegative constant $A_1$ and $A_0\geq A_0(n,\lambda,R_0^{\alpha},\alpha,\sup_{M^n}(v^{\frac{n}{2}-\lambda-\frac{k-1}{2}}e^v\arrowvert Q_k\arrowvert),\min_{M^n}f).$ For $A_1$ large enough, one can ensure that
\begin{eqnarray*}
&&\sup_{v=v_k}\left(v^{\lambda+\frac{k}{2}}e^{-C_0v^{-\tilde{\alpha}}}\arrowvert\nabla^kT\arrowvert+A_0v^{2\lambda+\frac{k-1}{2}}e^{-v}-A_1v^{2\lambda+k-\frac{n}{2}}e^{-v-C_1v^{-\tilde{\alpha}}}\right)\leq 0,\\
&& A_1\geq C(n,k,\lambda,R_0^{\alpha},\alpha,\sup_{M^n}(v^{\frac{n}{2}-\lambda-\frac{k-1}{2}}e^v\arrowvert Q_k\arrowvert),\min_{M^n}f)\sup_{v\leq v_k}\arrowvert\nabla^kT\arrowvert.
\end{eqnarray*}
Now, by the local estimates given by lemma \ref{loc-est-tensor-egs}, as $T=\textit{O}(v^{2\lambda-n/2}e^{-v})$, $T_k=0$ for any $k\geq 0$.

Therefore, if $t$ goes to $+\infty$ together with lemma \ref{loc-est-tensor-egs},
\begin{eqnarray*}
&&\sup_{M^n}\left(v^{\frac{n}{2}-\lambda-\frac{k}{2}}e^v\arrowvert\nabla^kT\arrowvert\right)\leq C(n,k,\alpha,\lambda,\min_{M^n}f,R_0^{\alpha},\sup_{v\leq v_k}\arrowvert T\arrowvert),\\
&&v_k=v_k\left(n,k,\lambda,\alpha,R_0^{\alpha},\min_{M^n}f\right).
\end{eqnarray*}

\end{enumerate}
\end{proof}

\section{Asymptotic estimates}\label{asy-est-sec}
\subsection{ Rate of the convergence to the asymptotic cone : generic case}

By the very definition of the asymptotic cone, the asymptotic geometry is close to the cone $(C(X),dr^2+r^2g_{X})$ in the sense of smooth Cheeger-Gromov convergence. The aim of this section is to quantify this convergence. This approach is inspired by \cite{Ban-Kas-Nak} and \cite{Che-Tian-Ric-Fla} which are the cornerstones of the study of asymptotically locally euclidean manifolds (ALE) : see also \cite{Ach-Via}.

\begin{defn}
  $\A_g^k(T):=\limsup_{x\rightarrow+\infty}r_p(x)^{2+k}\arrowvert \nabla^kT\arrowvert_g(x)$ for a tensor $T$ and a nonnegative integer $k$.
\end{defn}
The purpose of this section is to give a proof of the following theorem.

\begin{theo}\label{asy-con-gen}
Let $(M,g,\nabla f)$ be an expanding gradient Ricci soliton such that
\begin{eqnarray*}
 \A^k_g(\Ric(g))<+\infty,\quad \forall k\in\mathbb{N}.
\end{eqnarray*}

Then there exists a metric cone over a smooth compact manifold $(C(X),dr^2+r^2g_X)$ with a smooth metric such that $(M,g,\nabla f)$ is asymptotic to $(C(X),dr^2+r^2g_X,r\partial_r/2)$ at polynomial rate $\tau=2$. In particular, the invariants $(\A_g^k(\Rm(g))_{k\geq 0}$ are finite.

\end{theo}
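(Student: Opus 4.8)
The plan is to upgrade the hypotheses on $\Ric(g)$ to the full curvature tensor, establishing $\A_g^k(\Rm(g))<+\infty$ for every $k$, and only afterwards to extract the conical coordinates and the convergence rate. The engine of the upgrade is the soliton structure: the contracted second Bianchi identity together with (\ref{equ:4}) gives $\div\Rm(g)=\nabla\Ric(g)$ up to a universal contraction, so that every occurrence of $\div\Rm(g)$ (and its derivatives) in the evolution formulas of Proposition \ref{id-op} is in fact controlled by covariant derivatives of $\Ric(g)$, which are bounded by assumption. The feature that pins down the \emph{exact} decay rate is the damping coming from $\nabla^2 f=\Ric(g)+g/2\geq g/2$ near infinity.

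Concretely I would start from Proposition \ref{id-op}(1), rewritten with $\nabla^2 f(\Rm(g),\Rm(g))=\langle\Rm(g)(\cdot,\Ric(g)(\cdot),\cdot,\cdot),\Rm(g)\rangle+\tfrac12|\Rm(g)|^2$, as
\[
\nabla_{\nabla f}|\Rm(g)|^2+2|\Rm(g)|^2=-4\langle\Rm(g)(\cdot,\Ric(g)(\cdot),\cdot,\cdot),\Rm(g)\rangle-4\langle\nabla\div\Rm(g),\Rm(g)\rangle.
\]
Using $|\Ric(g)|=\textit{O}(r_p^{-2})$, $|\nabla\div\Rm(g)|=\textit{O}(|\nabla^2\Ric(g)|)=\textit{O}(r_p^{-4})$, and the fact that on the end $|\nabla f|^2=v-n/2-\R_g\sim v\sim r_p^2/4$ by (\ref{equ:3}) (since $\R_g\to0$), so that $\nabla f$ has no zeros and its flow foliates a neighbourhood of infinity, this reduces to a scalar transport inequality $v\,\tfrac{dw}{dv}+2w\leq Cv^{-1}w+Cv^{-2}w^{1/2}$ for $w:=|\Rm(g)|^2$ along the flow lines. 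An integrating-factor argument (with weight $v$ after substituting $w^{1/2}$) then yields $|\Rm(g)|=\textit{O}(v^{-1})=\textit{O}(r_p^{-2})$, i.e. $\A_g^0(\Rm(g))<+\infty$. The higher orders follow by the same scheme applied to Proposition \ref{id-op}(3): by induction all forcing terms $\nabla^{k+2}\Ric(g)\ast\nabla^k\Rm(g)$, $\Ric(g)\ast(\nabla^k\Rm(g))^{\ast2}$ and $\sum_i\nabla^i\Ric(g)\ast\nabla^{k-i}\Rm(g)\ast\nabla^k\Rm(g)$ are $\textit{O}(r_p^{-2})|\nabla^k\Rm(g)|^2+\textit{O}(r_p^{-4-k})|\nabla^k\Rm(g)|$, and the damping $(2+k)|\nabla^k\Rm(g)|^2$ integrates to the sharp rate $|\nabla^k\Rm(g)|=\textit{O}(r_p^{-2-k})$. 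The passage from these $\limsup$ bounds to genuine global suprema is then exactly what Corollary \ref{coro-cruc-tens} provides, applied to $T=\Rm(g)$, $\lambda=1$, using the evolution equation (\ref{equ:5}).

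With $\A_g^k(\Rm(g))<+\infty$ for all $k$ in hand, quadratic decay of the full curvature holds, so \cite{Che-Der} applies and produces a unique asymptotic cone $(C(X),dr^2+r^2g_X)$ over a compact $X$. To realize the Definition I would use the potential itself: since $\nabla f\neq0$ and $f$ is proper on the end, its gradient flow identifies $\{f\geq t_0\}$ with $[t_0,+\infty)\times X$, and reparametrizing by $r:=2\sqrt f$ enforces (\ref{cond-exp-1}). In these coordinates the soliton identity $\nabla^2 f=\Ric(g)+g/2$ controls the lapse $|\nabla f|$ and, via the second fundamental form of the level sets, the radial evolution of the induced metric; the deviations from $dr^2+r^2g_X$ are governed by $\Ric(g)$ and $\Rm(g)$ and so inherit the rate $\textit{O}(r^{-2})$, while the higher $\A_g^k$ bounds bootstrap $g_X$ to be $C^\infty$ and yield (\ref{class-def-asy-con}) with $f_k(r)=r^{-2-k}$.

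The main obstacle is the first step: controlling the \emph{full} Riemann tensor — in particular its tracefree/Weyl part, which $\Ric(g)$ does not detect algebraically — purely from bounds on the Ricci curvature. This is possible only because of the soliton structure, which converts $\div\Rm(g)$ into $\nabla\Ric(g)$ and supplies, through $\nabla^2 f\geq g/2$, a damping constant ($2\lambda=2$ when $k=0$) matching the conical rate $v^{-\lambda}=r^{-2}$ exactly; making the constants and the foliation by flow lines cooperate uniformly at infinity is the delicate point. A secondary difficulty is upgrading the regularity of $g_X$ from the $C^{1,\alpha}$ of \cite{Che-Der} to $C^\infty$, which must be read off from the derivative estimates $\A_g^k(\Rm(g))<+\infty$ rather than from elliptic regularity of the cone equation alone.
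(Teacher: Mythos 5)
Your proposal is correct and takes essentially the same route as the paper: the same transport inequality along the Morse flow of $f$ coming from Proposition \ref{id-op} (with $\div\Rm(g)$ converted into $\nabla\Ric(g)$ by the contracted Bianchi identity and the damping supplied by $\nabla^2 f=\Ric(g)+g/2$) yields $\A_g^k(\Rm(g))<+\infty$ with the identical two-derivative loss, and the same gradient-flow parametrization with $r=2\sqrt{f}$ produces the polar form of the metric and the $\textit{O}(r^{-2})$ convergence, quoting \cite{Che-Der} for the cone. The only differences are cosmetic: you establish the curvature bounds before constructing the cone (the paper defers them to the end of its proof), and your appeal to Corollary \ref{coro-cruc-tens} is harmless but unnecessary, since the integrated transport inequality already gives the required bounds at infinity.
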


\begin{proof}
The proof is essentially the same as in \cite{Che-Der}. Nonetheless, we clarify the situation by pointing out the crucial estimates.
 Indeed, as the Ricci curvature has quadratic curvature decay, one can show that the potential function is equivalent to $r_p^2/4$ for any $p\in M$, hence proper and by (\ref{equ:4}), the level sets of $f+\mu(g)$ (this choice will be made clearer below), denoted by $M_t:=\{f+\mu(g)=t\}$, are well-defined compact diffeomorphic hypersurfaces, for $t$ large enough. Let's define the following diffeomorphism \begin{eqnarray*}
\phi:[t_0,+\infty)\times M_{t_0^2/4}=:[t_0,+\infty)\times X &\rightarrow& M_{\geq t_0^2/4}\\
(x,t)&\rightarrow& \phi_{\frac{t^2}{4}-\frac{t_0^2}{4}}(x),
\end{eqnarray*}
for $t_0$ large enough, where $(\phi_t)_t$ is the Morse flow associated to the potential function $f+\mu(g)$, i.e. $$\partial_t\phi_t=\frac{\nabla f}{\arrowvert\nabla f\arrowvert^2}.$$
Then the pull-back metric $\phi^*g$ can be written as follows
\begin{eqnarray*}
\phi^*g&=&\frac{t^2}{4\arrowvert\nabla f\arrowvert^2}dt^2+t^2\bar{g}_t\\
&=&\left(1+\frac{4\phi^*\R_g}{t^2-4\phi^*\R_g}\right)dt^2+t^2\bar{g}_t,
\end{eqnarray*}
where, by definition, $\bar{g}_t:=t^{-2}\phi_{t^2/4-t_0^2/4}^*(g|_{M_{t^2/4}}).$
Therefore, showing the expected asymptotic estimates amounts to study the convergence of the one parameter family of metrics $\bar{g}_t$ as $t$ goes to $+\infty$. Moreover, by the very definition of $\phi$, one checks immediately that it is compatible with the soliton structure :
\begin{eqnarray*}
(f+\mu(g))(\phi(t,x))=\frac{t^2}{4}\quad;\quad\partial_t\phi=\sqrt{f+\mu(g)}\frac{\nabla f}{\arrowvert\nabla f\arrowvert^2}.
\end{eqnarray*}

Under the sole assumption $\A_g^0(\Rm(g))<+\infty$, Chen and the author show in \cite{Che-Der} that $(\bar{g}_t)_{t\geq t_0}$ $C^{1,\alpha}$ converges to a $C^{1,\alpha}$ metric $\bar{g}_{\infty}$ without giving the rate of the convergence. Moreover, if one imposes bounds on the rescaled covariant derivatives of the Ricci tensor, the family $(\bar{g}_t)_t$ converges to $\bar{g}_{\infty}$ in the $C^{\infty}$ topology.

Indeed, we are even able to precise the convergence of this family of metrics to its limit $\bar{g}_{\infty}$ : 
\begin{eqnarray}
\partial_t\bar{g}_t&=&-2t^{-1}\bar{g}_t+t^{-2}\cdot\frac{t}{2}\frac{2\phi^*\nabla^2f}{\arrowvert\nabla f\arrowvert^2}\\
&=&\left(-2t^{-1}+\frac{t}{2\arrowvert\nabla f\arrowvert^2}\right)\bar{g}_t+\frac{\phi^*\Ric(g)}{t\arrowvert\nabla f\arrowvert^2}\\
&=&\frac{8\phi^*\R_g}{t(t^2-\phi^*\R_g)}\bar{g}_t+\frac{4\phi^*\Ric(g)}{t(t^2-4\phi^*\R_g)}. \label{est-rate-con-exp}
\end{eqnarray}
Hence, if the Ricci curvature decays quadratically,
\begin{eqnarray}
\bar{g}_t-\bar{g}_{\infty}=\textit{O}(t^{-2}).\label{dec-c^0-con-exp}
\end{eqnarray}

With appropriate bounds on the rescaled covariant derivatives of the Ricci curvature, one gets more precise estimates concerning the covariant derivatives of the difference  $\bar{g}_t-\bar{g}_{\infty}$ : firstly, one derivates equation (\ref{est-rate-con-exp}) and then, by using the assumption on the decay of the covariant derivatives of the Ricci curvature, one integrates as we did previously to get $\nabla^{\bar{g}_{\infty},i}\bar{g}_t=\textit{O}(t^{-2})$ for $i\in\mathbb{N}$.

We finally prove that the full curvature tensor decays quadratically if a sufficient number (two exactly) of covariant derivatives of the Ricci curvature are bounded in a rescaled sense.

By a previous observation, if $\A^0_g(\Ric(g))<+\infty$, $f$ is quadratic in the distance to a fixed point and its Morse flow is well-defined outside a compact set. Therefore, by proposition \ref{id-op}, one obtains on $M_t$ :
\begin{eqnarray*}
\partial_t\arrowvert\Rm(g)\arrowvert^2+\frac{4\nabla^2 f(\Rm(g),\Rm(g))}{\arrowvert\nabla f\arrowvert^2}=\frac{1}{\arrowvert\nabla f\arrowvert^2}\nabla^2\Ric(g)\ast\Rm(g),
\end{eqnarray*}
which means in particular that
\begin{eqnarray}
\partial_t(t^2\arrowvert\Rm(g)\arrowvert^2)\leq\textit{O}(t^{-2})(t^2\arrowvert\Rm(g)\arrowvert^2)+c(n)\arrowvert\nabla^2\Ric(g)\arrowvert(t\arrowvert\Rm(g)\arrowvert).\label{bd-curv-op-gen-case}
\end{eqnarray}
By integrating this differential inequality, we get for $t_0\leq t$,
\begin{eqnarray*}
t\sup_{M_t}\arrowvert\Rm(g)\arrowvert\leq t_0\sup_{M_{t_0}}\arrowvert\Rm(g)\arrowvert+\textit{O}\left(\int_{t_0}^t\sup_{M_s}\arrowvert\nabla^2\Ric(g)\arrowvert ds\right),
\end{eqnarray*}
giving the finiteness of $\A^0_g(\Rm(g)$ if $\A_g^2(\Ric(g))<+\infty$. The finiteness of the invariants $\A^k_g(\Rm(g))$ follows by induction by mimicking the previous argument together with the second part of proposition \ref{id-op}.

\end{proof}

\begin{rk}
Theorem \ref{asy-con-gen} does not assume a priori bounds on the full curvature tensor : the sole assumption on the rescaled covariant derivatives of the Ricci curvature implies the corresponding bounds for the curvature tensor. Nonetheless, the proof of theorem \ref{asy-con-gen} shows a loss of two derivatives when estimating the convergence and the finiteness of the invariants $(\A_g^k(\Rm(g))_{k\geq 0}$ which seems to be a usual phenomenon when dealing with geodesic normal coordinates : \cite{Che-Tian-Ric-Fla} and \cite{Ban-Kas-Nak}. Can one prove the existence of a harmonic diffeomorphism at infinity which preserves the soliton structure ?
\end{rk}
\subsection{Ricci flat asymptotic cones and rigidity of pinched metrics}

The first part of this section consists in proving the following theorem :

\begin{theo}\label{dec-Ricci-flat-cone}
Let $(M^n,g,\nabla f)$ be an expanding gradient Ricci soliton. Assume $$\lim_{+\infty}r_p^2\Ric(g)=0.$$
\begin{enumerate}
\item Then, $\nabla^k\Ric(g)=\textit{O}(f^{1+\frac{k-n}{2}}e^{-f}),$ for any $k\geq 0$.\\

\item $(M,g,\nabla f)$ is asymptotic to a smooth Ricci flat metric cone $(C(X),dr^2+r^2g_X)$ at exponential rate. \\

\item If the scalar curvature is positive then 
\begin{eqnarray*}
\inf_M f^{n/2-1}e^f\R_g\geq C(n,\sup_{M^n}\R_g,\sup_{M^n} f\R_g)\min\{\liminf_{+\infty}f^{n/2-1}e^f\R_g ; \min_{f\leq C(\mu(g),n,\sup_{M^n}\R_g)}\R_g\}.\\
\end{eqnarray*}
\end{enumerate}
\end{theo}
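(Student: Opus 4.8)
The plan is to obtain all three assertions from the weighted-elliptic a priori estimates of Corollary \ref{coro-cruc-tens} applied to the tensor $T=\Ric(g)$, once the Ricci hypothesis has been promoted to a decay statement for the full curvature operator. First I would note that by (\ref{equ:6}) the Ricci tensor solves $\Delta_f\Ric(g)=-\Ric(g)+\Rm(g)\ast\Ric(g)$, which is exactly equation (\ref{evol-equ-gen-tens}) with $\lambda=1$. To feed this into Corollary \ref{coro-cruc-tens} I need $R_0:=\sup_{M}\bigl(v\,|\Rm(g)|\bigr)<+\infty$, i.e. quadratic decay of the \emph{entire} curvature tensor; this is where Theorem \ref{asy-con-gen} enters, the invariants $\A^k_g(\Ric(g))$ being finite (indeed $\A^0_g(\Ric(g))=0$), so that $\A^k_g(\Rm(g))<+\infty$ for all $k$ and $R_0<+\infty$ with $\alpha=1$. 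Since $v\sim r_p^2/4$, the hypothesis $\lim_{+\infty}r_p^2\Ric(g)=0$ says precisely that $T_0:=\limsup_{+\infty}v\,|\Ric(g)|=0$, so the third alternative of Corollary \ref{coro-cruc-tens} applies directly and yields $\sup_M v^{-1-k/2+n/2}e^{v}|\nabla^k\Ric(g)|<+\infty$ for every $k$. As $v=f+\mu(g)+n/2$ is comparable to $f$ and $e^{-v}=e^{-\mu(g)-n/2}e^{-f}$, this is the asserted bound $\nabla^k\Ric(g)=\textit{O}(f^{1+\frac{k-n}{2}}e^{-f})$, which is (1).

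For (2) I would insert this exponential decay into the evolution (\ref{est-rate-con-exp}) of the rescaled metrics $\bar g_t$ along the Morse flow. Since $f=t^2/4$ along $\phi$, part (1) gives $\phi^*\Ric(g),\phi^*\R_g=\textit{O}(t^{2-n}e^{-t^2/4})$, so $\partial_t\bar g_t=\textit{O}(t^{-1-n}e^{-t^2/4})$; this is integrable on $[t_0,+\infty)$, and integrating from $t$ to $+\infty$ gives $\bar g_t-\bar g_\infty=\textit{O}(t^{-2-n}e^{-t^2/4})$, with the analogous statement for covariant derivatives obtained by differentiating (\ref{est-rate-con-exp}) and using the higher-order decay from (1). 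This is convergence to $(C(X),dr^2+r^2g_X)$ at exponential rate. The Ricci-flatness of the cone is then soft: the Ricci tensor is scale invariant as a $(0,2)$-tensor and $\Ric(g)=\textit{O}(e^{-f})\to 0$, so the $C^\infty$-limit of the rescaled metrics has vanishing Ricci curvature, i.e. $(X,g_X)$ is Einstein with $\Ric_{g_X}=(n-2)g_X$ and $C(X)$ is Ricci flat.

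For (3) the relevant observation is that by (\ref{equ:7}) the scalar curvature satisfies $\Delta_f\R_g=-\R_g-2|\Ric(g)|^2\le-\R_g$, so $\R_g$ is a weighted \emph{supersolution} with $\lambda=1$, and by (1) the discarded term $2|\Ric(g)|^2=\textit{O}(f^{2-n}e^{-2f})$ is negligible against the natural barrier $v^{2-n/2}e^{-v}$. I would therefore run the minimum-principle counterpart of Lemma \ref{lemm-cruc}(1): one checks that $v\,e^{C_0v^{-\tilde\alpha}}\R_g-A_1v^{2-n/2}e^{-v+C_1v^{-\tilde\alpha}}$ is weighted superharmonic on $\{v_0\le v\le t\}$ and hence attains its minimum on $\{v=v_0\}\cup\{v=t\}$. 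Choosing $A_1$ as large as possible subject to this combination being nonnegative on both boundary pieces forces $A_1$ to be comparable to $\min\{\liminf_{+\infty}v^{n/2-1}e^{v}\R_g,\ \min_{\{v=v_0\}}\R_g\}$, and letting $t\to+\infty$ produces the stated lower bound for $\inf_M f^{n/2-1}e^{f}\R_g$, the constants depending on $n$, on $\sup_M f\R_g=\sup_M f^{\tilde\alpha}|\R_g|$ and on $\sup_M\R_g$ exactly as the constants of Lemma \ref{lemm-cruc} do.

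The hard parts will be two. The first is the promotion of Ricci decay to full-curvature decay needed for $R_0<+\infty$: for $n>3$ this is not a pointwise consequence of $\lim_{+\infty}r_p^2\Ric(g)=0$ and must pass through Theorem \ref{asy-con-gen}, with the characteristic loss of two derivatives visible in (\ref{bd-curv-op-gen-case}). The second is the sign bookkeeping that converts the maximum principle of Lemma \ref{lemm-cruc} into the minimum principle of (3) while simultaneously tracking the asymptotic ($\{v=t\}$) and the compact-core ($\{v=v_0\}$) contributions to the constant; everything else is a direct substitution into Corollary \ref{coro-cruc-tens} or into the already-established computation (\ref{est-rate-con-exp}).
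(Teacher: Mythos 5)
Your part (1) contains the decisive gap. To make Corollary \ref{coro-cruc-tens} applicable you need curvature decay, and you propose to get it from Theorem \ref{asy-con-gen}, asserting that ``the invariants $\A^k_g(\Ric(g))$ are finite (indeed $\A^0_g(\Ric(g))=0$)''. But the hypothesis of Theorem \ref{asy-con-gen} is finiteness of $\A^k_g(\Ric(g))$ for \emph{every} $k\in\mathbb{N}$, whereas the assumption of the present theorem only controls the $k=0$ invariant. There is no way to produce the higher invariants $\A^k_g(\Ric(g))$, $k\geq 1$, at this stage: the only mechanism for bounding covariant derivatives of the Ricci tensor, the local Shi-type estimate of Lemma \ref{loc-est-tensor-egs}, carries $\sup|\Rm(g)|$ in its constant, i.e.\ it presupposes exactly the curvature control you are trying to establish. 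The argument is circular; a priori one does not even know $|\Rm(g)|$ is bounded, let alone quadratically decaying, so $R_0=\sup_M v\,|\Rm(g)|<+\infty$ with $\alpha=1$ cannot be asserted.

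The paper breaks this circle by a different route, which is worth noting. First, an adaptation of Munteanu--Wang shows that bounded Ricci curvature together with $f$ exhausting forces at most \emph{polynomial} growth $\Rm(g)=\textit{O}(f^{\alpha})$. Then one bootstraps: Lemma \ref{loc-est-tensor-egs}, whose constant now uses the polynomial bound, gives $\nabla^2\Ric(g)=\textit{O}(f^{\alpha-1})$, and integrating the Morse-flow inequality (\ref{bd-curv-op-gen-case}) from the proof of Theorem \ref{asy-con-gen} improves the curvature bound by one power; iterating yields only $\Rm(g)=\textit{O}(f^{-1+\alpha'})$ for any $\alpha'\in(0,1)$, never full quadratic decay. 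This is precisely why Corollary \ref{coro-cruc-tens} is formulated with $R_0^{\alpha}=\sup_M v^{\min\{\alpha,1\}}|\Rm(g)|$ for an \emph{arbitrary} $\alpha>0$: the paper applies it with $\alpha=1-\alpha'\in(0,1)$, and the conclusion $\nabla^k\Ric(g)=\textit{O}(f^{1+\frac{k-n}{2}}e^{-f})$ is insensitive to the value of $\alpha$. Once part (1) is repaired this way, your parts (2) and (3) stand: (2) follows the paper (Theorem \ref{asy-con-gen} now applies legitimately, and integrating (\ref{est-rate-con-exp}) gives the exponential rate and the Ricci-flatness of the cone), and your part (3) --- a minimum principle for $v\R_g-A_1v^{2-n/2}e^{-v+C_1v^{-1}}$ --- is an equivalent repackaging of the paper's argument, which instead shows directly that $v^{n/2-1}e^{v}e^{-Cv^{-1}}\R_g$ is superharmonic for a suitable weight; the two coincide because your combination is nonnegative exactly when $v^{n/2-1}e^{v}\R_g\geq A_1$ up to constants.
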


\begin{rk}
\begin{itemize}
\item The assumption on the Ricci curvature is satisfied if, for instance, the convergence to its asymptotic cone is at least $C^k$, for $k\geq 2$ and if the asymptotic cone is Ricci flat.

\item The Ricci curvature decay obtained in theorem \ref{dec-Ricci-flat-cone} is sharp. Indeed, according to \cite{Sie-PHD}, the Ricci curvature decay of the Ricci expanders $(L^{-k},g_{k,1})_{k}$ with $k\in\mathbb{N}^*$ coming out of the cone $(C(\mathbb{S}^{2n-1}/\mathbb{Z}_k),i\partial\bar{\partial}(\arrowvert z\arrowvert^{2}))$ built in \cite{Fel-Ilm-Kno} is exactly $\textit{O}(f^{1-n/2}e^{-f})$. 
\end{itemize}
\end{rk}

\begin{proof}
\begin{enumerate}
\item By lemma \ref{id-EGS}, equation (\ref{equ:6}), the Ricci curvature satisfies $$\Delta_f\Ric(g)=-\Ric(g)-2\Rm(g)\ast\Ric(g).$$
Moreover, as $\lim_{+\infty}r_p^2\Ric(g)=0$, it implies $\lim_{+\infty}f\Ric(g)=0$ since $f$ is equivalent to $r_p^2/4$. In order to apply lemma \ref{coro-cruc-tens}, we have to prove that the curvature tensor decays appropriately at infinity. A priori, we have no information on the curvature growth at infinity, nonetheless, one can adapt the work of Munteanu and Wang \cite{Mun-Wan} to the expanding case by proving that the curvature tensor grows at most polynomially providing the Ricci curvature is bounded and $f$ is an exhausting function which is the case here. As $\Ric(g)$ decays quadratically, by lemma \ref{loc-est-tensor-egs}, $\nabla^2\Ric(g)$ grows at most like $f^{\alpha-1}$ if $\Rm(g)=\textit{O}(f^{\alpha})$ for some positive $\alpha$. Then, by using lemma \ref{id-op} as we did in the proof of theorem \ref{asy-con-gen}, one can show that $\Rm(g)=\textit{O}(f^{\alpha-1})$. Hence, by induction, one shows actually that $\Rm(g)=\textit{O}(f^{-1+\alpha'})$ for any $\alpha'\in(0,1)$.

  Now, we apply lemma \ref{coro-cruc-tens} with $T=\Ric(g)$, $\lambda=1$ and $\alpha'\in(0,1)$ to get 
\begin{eqnarray*}
\sup_{M^n}v^{\frac{n-k}{2}-1}e^v\arrowvert \nabla^k\Ric(g)\arrowvert<+\infty,\quad\forall k\geq 0.
\end{eqnarray*}

\item Clearly, by the previous step, the invariants $(\A_g^k(\Ric(g)))_{k\geq 0}$ are finite. Therefore, we can apply theorem \ref{asy-con-gen} to claim that $(M^n,g,\nabla f))$ is asymptotically conical to a metric cone $(C(X),dr^2+r^2g_X)$ with $(X,g_X)$ a smooth Riemannian manifold. Moreover, as $\lim_{+\infty}f\Ric(g)=0$, the metric cone is Ricci flat.

The convergence is exponential since, according to equation (\ref{est-rate-con-exp}), the rate is given by 
\begin{eqnarray*}
\textit{O}\left(\int_t^{+\infty}s^2\sup_{M_{s^2/4}}\arrowvert\Ric(g)\arrowvert\cdot s^{-3}ds\right)=\textit{O}\left(t^{-n}e^{-\frac{t^2}{4}}\right).
\end{eqnarray*}

\item
Concerning the lower bound for the scalar curvature, we employ the same method, that is, we compute the evolution of the quantity $v^{n/2-1}e^v\R_g$ hoping that some weighted laplacian of it is positive outside a compact set.

We first consider $e^v\R_g$ : 
\begin{eqnarray}
\Delta_v(e^v\R_g)&=&\left(\Delta_ve^v\right)\R_g+2<\nabla v,\nabla \left(\R_ge^v\right)>-2\arrowvert\nabla v\arrowvert^2\R_ge^v+e^v\Delta_v\R_g,\\
\Delta_{-v}(e^v\R_g)&=&(v-\arrowvert\nabla v\arrowvert^2-1)\R_ge^v-2\arrowvert\Ric(g)\arrowvert^2e^v\\ \label{evol-exp-scal}
&=&\left(\R_g+\frac{n}{2}-1\right)e^v\R_g-2\arrowvert\Ric(g)\arrowvert^2e^v.
\end{eqnarray}
Then, for some real $\alpha$,
\begin{eqnarray*}
\Delta_{-v}(v^{\alpha}e^v\R_g)&=&\Delta_{-v}\left(v^{\alpha}\right)(e^v\R_g)+2\alpha<\nabla \ln v,\nabla(v^{\alpha}e^v\R_g)>\\
&&-2\alpha^2\arrowvert\nabla\ln v\arrowvert^2v^{\alpha}e^v\R_g+\left(\R_g+\frac{n}{2}-1\right)v^{\alpha}e^v\R_g-2\arrowvert\Ric(g)\arrowvert^2v^{\alpha}e^v,\\
\Delta_{-v-2\alpha\ln v}(v^{\alpha}e^v\R_g)&=&\left(\frac{n}{2}-1-\alpha+\R_g+2\alpha\frac{\Delta v}{v}-\alpha(\alpha+1)\arrowvert\nabla\ln v\arrowvert^2\right)v^{\alpha}e^v\R_g\\
&&-2\arrowvert\Ric(g)\arrowvert^2v^{\alpha}e^v\\
&=&\left(\frac{n}{2}-1-\alpha+\frac{2\alpha\Delta v-\alpha(\alpha+1)}{v}+\alpha(\alpha+1)\frac{\Delta v}{v^2}\right)v^{\alpha}e^v\R_g\\
&&+(\R_g^2-2\arrowvert\Ric(g)\arrowvert^2)v^{\alpha}e^v.
\end{eqnarray*}
If $\alpha:=n/2-1$, we get
\begin{eqnarray*}
\Delta_{-v-(n-2)\ln v}(v^{n/2-1}e^v\R_g)&=&\left(\frac{(n/2-1)n/2+(n-2)\R_g}{v}\right)e^v\R_g\\
&&+\left(n/2(n/2-1)\frac{\R_g+n/2}{v^2}\right)v^{n/2-1}e^v\R_g\\
&&+(\R_g^2-2\arrowvert\Ric(g)\arrowvert^2)v^{n/2-1}e^v.
\end{eqnarray*}
Therefore, one has
\begin{eqnarray}
\Delta_{-v-(n-2)\ln v}(v^{n/2-1}e^v\R_g)\leq \left(\frac{C(n,\sup_{M^n}\R_g)}{v}+\R_g\right)v^{n/2-1}e^v\R_g.\label{evol-exp-pol-scal}
\end{eqnarray}

According to the differential inequality (\ref{evol-exp-pol-scal}), one has
$$\Delta_{-v-(n-2)\ln v}(v^{n/2-1}e^v\R_g)\leq \left(\frac{C(n,\sup_{M^n}\R_g,\sup_{M^n}v\R_g)}{v}\right)v^{n/2-1}e^v\R_g.$$
By multiplying by a function of the form $e^{Cv^{-1}}$ where $C$ is a constant depending on $n$, $\sup_{M^n}\R_g$ and $\sup_{M^n}v\R_g$ as we did before for the Ricci curvature, one has $$ \Delta_{-v-(n-2)\ln v-2Cv^{-1}}(v^{n/2-1}e^ve^{-Cv^{-1}}\R_g)<0,$$ outside a sublevel set of the form $\{v\leq C(n,\sup_{M^n}\R_g)\}$. Hence the result.
\end{enumerate}
\end{proof}

We decide to prove proposition \ref{ric-pin} as a natural follow-up to the proof of the previous theorem.

\begin{proof}[Proof of proposition \ref{ric-pin}]
\begin{enumerate}
\item As the Ricci curvature is pinched and the scalar curvature is nonnegative, the Ricci curvature is trivially nonnegative, therefore, the potential function is quadratic in the distance function by (\ref{inequ:3}) in proposition \ref{pot-fct-est}. Moreover, the Ricci curvature decays exponentially at infinity : this has already been shown in previous works on Ricci pinched expanders. We reprove it for the convenience of the reader. Consider the Morse flow $(\phi_t)_t$ associated to the potential function. Then, by (\ref{equ:2}) of lemma \ref{id-EGS},
\begin{eqnarray*}
\partial_t\R_g=\left<\nabla \R_g,\frac{\nabla f}{\arrowvert\nabla f\arrowvert^2}\right>=-2\Ric(g)(\textbf{n},\textbf{n}).
\end{eqnarray*}
 Since the Ricci curvature is pinched, 
 \begin{eqnarray*}
\partial_t\R_g\leq -\frac{2\epsilon}{n}\R_g,
\end{eqnarray*}
which implies that $\R_g(\phi_t(x))\leq e^{-2\epsilon t/n}\R_g(x)$, for $t\geq 0$, i.e. the Ricci curvature decays as $e^{-2\epsilon f/n}$. In particular, one has $\lim_{+\infty}r_p^2\Ric(g)=0$ and theorem \ref{dec-Ricci-flat-cone} applies. Moreover, as it is explained in lemma \ref{id-EGS}, the levels sets of $f$ are diffeomorphic to $\mathbb{S}^{n-1}$ : the link of the asymptotic cone of $(M^n,g,\nabla f)$ is therefore a smooth standard sphere endowed with an Einstein metric with constant $n-2$.\\

\item According to the previous step of the proof of proposition \ref{ric-pin}, the asymptotic cone is $(C(X),dr^2+r^2g_{X})$ where $X$ is  diffeomorphic to $\mathbb{S}^{n-1}$ and $g_X$ satisfies $\Ric(g_X)=(n-2)g_X$, which, in (global) dimension $3$ or $4$, implies that $(X,g_X)$ is isometric to the Euclidean sphere of curvature $1$. In particular, the asymptotic volume ratio is $$\AVR(g)=\lim_{r\rightarrow+\infty}\frac{\vol B(p,r)}{r^n}=\omega_n,$$
where $\omega_n$ is the volume of the unit ball of the $n$-dimensional Euclidean space. By the Bishop-Gromov theorem, $(M^n,g)$ is isometric to the Euclidean space when $n\in\{3,4\}$.\\

\item On the one hand, if the Ricci curvature is pinched, that is, there exists some positive number $\epsilon$ less than or equal to $1$ such that $$\Ric(g)\geq\frac{\epsilon \R_g}{n}g,$$
then we estimate the norm of the Ricci curvature as follows :
\begin{eqnarray*}
\arrowvert\Ric(g)\arrowvert^2&=&\left\arrowvert\Ric(g)-\frac{\epsilon \R_g}{n}g+\frac{\epsilon \R_g}{n}g\right\arrowvert^2\\
&\leq&(1-\epsilon)^2\R_g^2+\frac{2\epsilon (1-\epsilon)}{n}\R_g^2+\frac{\epsilon^2}{n}\R_g^2\\
&\leq&\left(c(n)\epsilon^2-2c(n)\epsilon+1\right)\R_g^2,
\end{eqnarray*}
where $c(n)=1-1/n$. Consequently, $2\arrowvert\Ric(g)\arrowvert^2\leq \R_g^2$ pointwisely if and only if $\epsilon\in[\epsilon_{-},1]$ where $$\epsilon_{-}:=1-\sqrt{1-\frac{1}{2\left(1-\frac{1}{n}\right)}},$$
which means that $e^v\R_g$ satisfies on $M^n$ according to equation (\ref{evol-exp-scal}),
\begin{eqnarray*}
\Delta_{-v}(e^v\R_g)\geq\left(\frac{n}{2}-1\right)e^v\R_g.
\end{eqnarray*}
By applying the maximum principle, we get that $(M^n,g)$ is scalar flat since $$\lim_{+\infty}e^v\R_g=0,$$ as soon as $n\geq 3$ by the Ricci curvature decay obtained in theorem \ref{dec-Ricci-flat-cone}. By the evolution equation (\ref{equ:7}), $(M^n,g)$ is Ricci flat hence isometric to the Euclidean case according to \cite{Pet-Wyl-Rig}.
\end{enumerate}
\end{proof}

\subsection{Estimates in dimension $3$}

Define $$T:=\Ric(g) -\frac{R}{2}(g-\mathbf{n}\otimes\mathbf{n}),$$
where $\textbf{n}:=\nabla f/\arrowvert\nabla f\arrowvert$.
This tensor reflects the conical geometry of a three dimensional manifold. A priori, if the curvature decays quadratically, $T$ decays quadratically too. The next lemma which is independent of the dimension tells us that $T$ should converge much faster.
\begin{lemma}\label{dec-rap-ric}
Let $(M^n,g,\nabla f)$ be an expanding gradient Ricci soliton such that $\A^0_g(\Ric(g))$ and $\A_g^1(\R_g)$ are finite. Then,
$\sup_{M_t}\arrowvert\Ric(g)(\mathbf{n})\arrowvert=\textit{O}(f^{-2})$, where $\mathbf{n}:=\frac{\nabla f}{\arrowvert\nabla f\arrowvert}$.
\end{lemma}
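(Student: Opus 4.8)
The plan is to read off the radial part of the Ricci curvature directly from the first-order soliton identity, and then convert it into the stated decay using the known asymptotics of $\arrowvert\nabla f\arrowvert$. First I would start from equation (\ref{equ:2}) of lemma \ref{id-EGS},
$$\nabla\R_g+2\Ric(g)(\nabla f)=0,$$
which, after dividing by $\arrowvert\nabla f\arrowvert$, gives the exact identity
$$\Ric(g)(\mathbf{n})=-\frac{\nabla\R_g}{2\arrowvert\nabla f\arrowvert}$$
as full one-forms, where $\mathbf{n}:=\nabla f/\arrowvert\nabla f\arrowvert$. Taking norms yields $\arrowvert\Ric(g)(\mathbf{n})\arrowvert=\arrowvert\nabla\R_g\arrowvert/(2\arrowvert\nabla f\arrowvert)$, so that the entire problem reduces to estimating the numerator and the denominator separately and uniformly on each level set $M_t$.

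For the denominator I would invoke equation (\ref{equ:3}), namely $\arrowvert\nabla f\arrowvert^2+\R_g=f+\mu(g)$. Since $\A^0_g(\Ric(g))<+\infty$ forces the quadratic curvature decay of theorem \ref{asy-con-gen}, and in particular $\R_g=\textit{O}(f^{-1})\to 0$, we obtain $\arrowvert\nabla f\arrowvert^2=f+\mu(g)-\R_g$ and hence $\arrowvert\nabla f\arrowvert\geq c\sqrt{f}$ for some $c>0$ on $M_t$ with $t$ large. The same hypothesis guarantees (as recalled in the proof of theorem \ref{asy-con-gen}) that $f$ is proper and equivalent to $r_p^2/4$, which is precisely what lets me trade powers of $r_p$ for powers of $f$.

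For the numerator, the finiteness of $\A^1_g(\R_g)$ means exactly that $\arrowvert\nabla\R_g\arrowvert=\textit{O}(r_p^{-3})$ outside a compact set, i.e. $\arrowvert\nabla\R_g\arrowvert=\textit{O}(f^{-3/2})$ after substituting $r_p\sim 2\sqrt{f}$. Combining the two bounds uniformly over $M_t$ then gives
$$\sup_{M_t}\arrowvert\Ric(g)(\mathbf{n})\arrowvert=\sup_{M_t}\frac{\arrowvert\nabla\R_g\arrowvert}{2\arrowvert\nabla f\arrowvert}\leq\frac{C\,t^{-3/2}}{2c\sqrt{t}}=\textit{O}(f^{-2}),$$
which is the desired conclusion.

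I do not expect any genuine obstacle here: the result is essentially a direct consequence of the first-order identity (\ref{equ:2}) together with the asymptotic control on $\arrowvert\nabla f\arrowvert$ coming from (\ref{equ:3}). The only two points deserving a word of care are the uniformity of the estimates over each level set $M_t$ (guaranteed by reading the invariants $\A^k_g$ as $\limsup$'s, hence as uniform $\textit{O}$-bounds outside a compact set) and the passage from decay in $r_p$ to decay in $f$, which rests on the equivalence $f\sim r_p^2/4$ furnished by the quadratic Ricci decay.
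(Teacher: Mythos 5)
Your proof is correct and is essentially identical to the paper's: the paper likewise divides the soliton identity (\ref{equ:2}) by $\arrowvert\nabla f\arrowvert$ to obtain $\Ric(g)(\mathbf{n})=-\nabla\R_g/(2\arrowvert\nabla f\arrowvert)$ and then concludes directly from $\A_g^1(\R_g)<+\infty$. Your write-up merely makes explicit the two ingredients the paper leaves implicit, namely $\arrowvert\nabla\R_g\arrowvert=\textit{O}(f^{-3/2})$ and $\arrowvert\nabla f\arrowvert\geq c\sqrt{f}$, the latter coming from (\ref{equ:3}) together with the decay of $\R_g$ forced by $\A_g^0(\Ric(g))<+\infty$.
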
 

\begin{proof}
By \eqref{equ:2}, one has 
\begin{eqnarray*}
\Ric(g)(\mathbf{n})=-\frac{\nabla \R_g}{2\arrowvert \nabla f\arrowvert}.
\end{eqnarray*}
We conclude by using the assumption $\A_g^1(\R_g)<+\infty$.

\end{proof}
\begin{rk}
Again, this is consistent with the conical geometry of such an expanding gradient Ricci soliton. Indeed, in the case of a metric cone, radial curvatures vanish. In particular, they decay faster than spherical curvatures which decay quadratically.
\end{rk}

The following proposition tells us that $T$ actually decays much faster in any direction.

\begin{prop}\label{est-sge-3d}
Let $(M^3,g,\nabla f)$ be an expanding gradient Ricci soliton. Then,
\begin{eqnarray*}
\A^0_g(\Ric(g))<+\infty+\A^1_g(\R_g)<+\infty \Rightarrow T=\textit{O}(f^{-2}).
\end{eqnarray*}
Moreover,
\begin{eqnarray*}
\A^0_g(\Ric(g))<+\infty+\A_g^k(\R_g)<+\infty,\forall k\geq 0\Rightarrow  \nabla^k T=\textit{O}(f^{-2-k/2}), \forall k\geq 0.
\end{eqnarray*}

In particular,
\begin{eqnarray*}
\A^0_g(\Ric(g))<+\infty+\A_g^k(\R_g)<+\infty,\forall k\geq 0\Rightarrow \A_g^k(\Ric(g))<+\infty,\forall k\geq 0.
\end{eqnarray*}
\end{prop}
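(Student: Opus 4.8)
The plan is to exploit the rigidity of three--dimensional curvature to reduce everything to a weighted elliptic equation for $T$ and then run the machinery of Lemma \ref{lemm-cruc} and Corollary \ref{coro-cruc-tens}, much as in the proof of Theorem \ref{asy-con-gen}. In dimension $3$ the full curvature tensor is algebraic in $\Ric(g)$ and $\R_g$,
\begin{eqnarray*}
\Rm(g)_{ijkl}=g_{ik}\Ric(g)_{jl}-g_{il}\Ric(g)_{jk}+g_{jl}\Ric(g)_{ik}-g_{jk}\Ric(g)_{il}-\frac{\R_g}{2}\left(g_{ik}g_{jl}-g_{il}g_{jk}\right),
\end{eqnarray*}
so every $\Rm(g)$ may be traded for $\Ric(g)$ and $\R_g$, and $\A_g^k(\Rm(g))$, $\A_g^k(\Ric(g))$ are simultaneously finite. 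I would first split $T$ in the frame adapted to $\mathbf{n}=\nf/\nfa$: its contractions with $\mathbf{n}$ are exactly those of $\Ric(g)$, hence $\textit{O}(f^{-2})$ by Lemma \ref{dec-rap-ric}, and its tangential trace equals $-\Ric(g)(\mathbf{n},\mathbf{n})=\textit{O}(f^{-2})$. The whole difficulty is therefore concentrated in the tangential trace--free part $S$ of $T$ (equivalently of $\Ric(g)$), for which one must prove $S=\textit{O}(f^{-2})$.

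Next I would compute $\Delta_fT$. Combining \eqref{equ:6}, \eqref{equ:7}, the identity $\Hess f=\Ric(g)+g/2$ and the exact relation $\nabla \R_g=-2\,\nfa\,\Ric(g)(\mathbf{n})$ from \eqref{equ:2}, and using the algebraic formula above to expand $\Rm(g)\ast\Ric(g)$, the subtraction $\tfrac{\R_g}{2}(g-\mathbf{n}\otimes\mathbf{n})$ is designed so that
\begin{eqnarray*}
\Delta_fT=-\lambda T+\Rm(g)\ast T+Q.
\end{eqnarray*}
Here $\Rm(g)\ast T$ is harmless: since $\Rm(g)=\textit{O}(f^{-1})$ it is absorbed as a $c_1v^{-\alpha}$ potential with $\alpha=1$, and the same holds for the trace--free part of the quadratic terms $|\Ric(g)|^2(g-\mathbf{n}\otimes\mathbf{n})$ and $\Rm(g)\ast\Ric(g)$, whose dangerous pieces are either pure traces (killed by the projection) or of the form $\R_g\,S$ and $S\ast S$. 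The point that must come out of the computation is that the reaction coefficient is $\lambda=2$ and that the genuine source $Q$, built only from $\Ric(g)(\mathbf{n})$ and $\nabla \R_g$, decays strictly faster than $f^{-2}$; by Corollary \ref{coro-cruc-tens} (or by integrating the corresponding radial inequality along the Morse flow as for \eqref{bd-curv-op-gen-case}) this produces $T=\textit{O}(f^{-2})$. The higher order bounds then follow inductively: commuting $\nabla^k$ through the equation with \eqref{comm-1} and \eqref{comm-2} yields a source $Q_k$ controlled, via the hypotheses $\A_g^k(\R_g)<+\infty$ and the local estimates of Lemma \ref{loc-est-tensor-egs}, so that Corollary \ref{coro-cruc-tens} gives $\nabla^kT=\textit{O}(f^{-2-k/2})$. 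Writing finally $\Ric(g)=T+\tfrac{\R_g}{2}(g-\mathbf{n}\otimes\mathbf{n})$ and differentiating, the dominant contribution is $(\nabla^k\R_g)(g-\mathbf{n}\otimes\mathbf{n})=\textit{O}(f^{-1-k/2})$, whence $\A_g^k(\Ric(g))<+\infty$.

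The main obstacle is precisely to extract the gain of two orders over the generic quadratic curvature decay, i.e. to show that the effective reaction coefficient is $2$ and not $1$, and that $Q$ is faster than $f^{-2}$. The difficulty is structural: the frame $\{\mathbf{n},\dots\}$ is not parallel, so $\Delta_f$ does not commute with the tangential trace--free projection. The commutators involve $\nabla\mathbf{n}$, whose leading term is the umbilic factor $\tfrac{1}{2\,\nfa}(g-\mathbf{n}\otimes\mathbf{n})$, paired against $\nabla\Ric(g)$; since only $\A_g^0(\Ric(g))$ and $\A_g^k(\R_g)$ are assumed, an arbitrary derivative of $\Ric(g)$ is a priori only $\textit{O}(f^{-1})$, and one must use the exact Bianchi and soliton identities --- $\nabla \R_g=-2\,\nfa\,\Ric(g)(\mathbf{n})$ and $\div\Ric(g)=\tfrac12\nabla \R_g$, both $\textit{O}(f^{-3/2})$ --- to guarantee that only these well--behaved combinations of $\nabla\Ric(g)$ actually enter $Q$. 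Carrying out this cancellation, and confirming that the three--dimensional algebra forces $\lambda=2$ unconditionally on the link (in contrast with the Einstein constraint $\R_{g_X}\neq(n-1)(n-2)$ needed in Theorems \ref{dec-ric-c^3} and \ref{dec-ric-c^4}), is the heart of the proof.
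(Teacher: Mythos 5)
Your plan correctly isolates the three-dimensional ingredients (the algebraic expression of $\Rm(g)$ through $\Ric(g)$ and $\R_g$, Lemma \ref{dec-rap-ric} for $\Ric(g)(\mathbf{n})$, and the soliton identities), but it has two genuine gaps. First, the step on which everything rests --- that $\Delta_fT=-\lambda T+\Rm(g)\ast T+Q$ with effective reaction coefficient $\lambda=2$ and a source $Q$ decaying strictly faster than $f^{-2}$ --- is precisely the part you defer (``carrying out this cancellation \dots is the heart of the proof''), so the proposal is a programme rather than a proof. Moreover the claim itself is off: the natural computation (carried out in the paper for Theorem \ref{dec-ric-c^3}, which in dimension $3$ reads with $n-1=2$) gives $\Delta_fT+T=$ quadratic and commutator terms, i.e.\ linear coefficient $1$, not $2$; passing to $u=|T|^2$ doubles the coefficient, but then what Lemma \ref{lemm-cruc} yields is dictated by the decay rate $\beta$ of the source, and making $\beta$ large enough is exactly the cancellation you postpone.

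Second, and more structurally, the elliptic route through Lemma \ref{lemm-cruc} and Corollary \ref{coro-cruc-tens} cannot by itself produce the improvement from $O(f^{-1})$ to $O(f^{-2})$ under the stated hypotheses. Those maximum-principle statements propagate decay already known at infinity: their conclusions have the form $\sup_{M^n} v^{\lambda}\arrowvert T\arrowvert\leq C\left(\limsup_{+\infty}v^{\lambda}\arrowvert T\arrowvert+\sup_{v\leq v_0}\arrowvert T\arrowvert\right)$, and in the presence of a source $Q=O(v^{-\beta})$ with $\lambda<\beta$ they improve the rate only if $\limsup_{+\infty}v^{\lambda}\arrowvert T\arrowvert=0$. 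Under the hypotheses of the proposition one only knows $T=O(f^{-1})$ (from $\A_g^0(\Ric(g))<+\infty$); one does \emph{not} know $\limsup_{+\infty} v\arrowvert T\arrowvert=0$, since no conical convergence is assumed, and Theorem \ref{asy-con-gen} (which would give smooth convergence to a cone, on which $T\equiv0$ in dimension $3$) requires all the $\A_g^k(\Ric(g))$ to be finite --- the conclusion, not the hypothesis. This is exactly why Theorem \ref{dec-ric-c^3} must assume $C^3$ convergence to the cone, and why the paper proves Proposition \ref{est-sge-3d} by a different mechanism: combining the dimension-$3$ formula for $\Rm_{\mathbf{n}ij\mathbf{n}}$, the soliton identity (\ref{equ:4}), the contracted second Bianchi identity and $\Ric(g)(\nabla f)=-\nabla\R_g/2$, it derives the first-order transport equation $\partial_tT+T=\textit{O}(f^{-2})$ along the Morse flow of $\nabla f/\nfa^2$ (equation (\ref{evo-T-dim-3})), with the source controlled by $\A_g^1(\R_g)<+\infty$, Lemma \ref{dec-rap-ric}, and the Shi estimate of Lemma \ref{elliptic} giving $\nabla^2\R_g=\textit{O}(f^{-1})$. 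Integrating this ODE damps the initial data exponentially, so no asymptotic smallness of $T$ is needed as input. You do mention this Morse-flow integration parenthetically as an alternative; it is the only one of your two routes that closes, and the proposal would have to be rebuilt around it.
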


\begin{proof}
Since the dimension is three :
\begin{eqnarray*}
\Rm_{\mathbf{n}ij\mathbf{n}}=\Ric(g)_{\mathbf{n}\mathbf{n}}g_{ij}+\Ric(g)_{ij}-\Ric(g)_{\mathbf{n}j}g_{\mathbf{n}i}-\Ric(g)_{\mathbf{n}i}g_{\mathbf{n}j}-\frac{\R_g}{2}\left(g_{ij}-g_{\mathbf{n}i}g_{\mathbf{n}j}\right).
\end{eqnarray*}
Now, by lemma \ref{id-EGS} and by the very definition of $T$ :
\begin{eqnarray*}
-\frac{1}{\arrowvert \nabla f\arrowvert}\div\Rm_{\mathbf{n}ij}=\Ric(g)_{\mathbf{n}\mathbf{n}}g_{ij}-\Ric(g)_{\mathbf{n}j}g_{\mathbf{n}i}-\Ric(g)_{\mathbf{n}i}g_{\mathbf{n}j}+T_{ij},
\end{eqnarray*}
At this stage in the proof, $T$ decays as expected assuming $\A^0_g(\Ric(g))$ and $\A^1_g(\Ric(g))$ are finite by lemma \ref{dec-rap-ric}. We claim that it is sufficient to assume the finiteness of $\A^1_g(\R_g)$ instead of $\A^1_g(\Ric(g))$.

Indeed,
\begin{eqnarray*}
\frac{1}{\arrowvert \nabla f\arrowvert}[\nabla_{\mathbf{n}}\Ric(g)_{ij}-\nabla_i\Ric(g)_{\mathbf{n}j}]+T_{ij}=-\Ric(g)_{\mathbf{n}\mathbf{n}}g_{ij}+\Ric(g)_{\mathbf{n}j}g_{\mathbf{n}i}+\Ric(g)_{\mathbf{n}i}g_{\mathbf{n}j}.
\end{eqnarray*}
Hence,
\begin{eqnarray*}
2\nabla_{\frac{\mathbf{n}}{\arrowvert \nabla f\arrowvert}}\Ric(g)_{ij}+2T_{ij}&=&2[-\Ric(g)_{\mathbf{n}\mathbf{n}}g_{ij}+\Ric(g)_{\mathbf{n}j}g_{\mathbf{n}i}+\Ric(g)_{\mathbf{n}i}g_{\mathbf{n}j}]\\
&&+\frac{1}{\arrowvert \nabla f\arrowvert}[\nabla_i\Ric(g)_{\mathbf{n}j}+\nabla_j\Ric(g)_{\mathbf{n}i}].
\end{eqnarray*}
Therefore,
\begin{eqnarray*}
2\nabla_{\frac{\mathbf{n}}{\arrowvert \nabla f\arrowvert}}\left[\Ric(g)_{ij}-\frac{\R_g}{2}g_{ij}\right]+2T_{ij}&=&2[\Ric(g)_{\mathbf{n}j}g_{\mathbf{n}i}+\Ric(g)_{\mathbf{n}i}g_{\mathbf{n}j}]\\
&&+\frac{1}{\arrowvert \nabla f\arrowvert}[\nabla_i\Ric(g)_{\mathbf{n}j}+\nabla_j\Ric(g)_{\mathbf{n}i}].
\end{eqnarray*}
Now,
\begin{eqnarray*}
\nabla_i\Ric(g)_{\mathbf{n}j}+\nabla_j\Ric(g)_{\mathbf{n}i}&=&-\frac{\nabla^2\Ric(g)_{ij}}{\arrowvert \nabla f\arrowvert}+\frac{1}{2}\nabla_j\R_g\cdot\frac{\nabla_i\arrowvert \nabla f\arrowvert}{\arrowvert \nabla f\arrowvert^2}+\frac{1}{2}\nabla_i\R_g\cdot\frac{\nabla_j\arrowvert \nabla f\arrowvert}{\arrowvert \nabla f\arrowvert^2}\\
&&-\Ric(g)(\nabla_i\mathbf{n},j)-\Ric(g)(\nabla_j\mathbf{n},i).
\end{eqnarray*}
So that,
\begin{eqnarray*}
2[\nabla_{\frac{\mathbf{n}}{\arrowvert \nabla f\arrowvert}}T+T]&=&\frac{\nabla_{\nabla f}\R_g}{\arrowvert \nabla f\arrowvert^2}\mathbf{n}\otimes\mathbf{n}+\frac{\R_g}{\arrowvert \nabla f\arrowvert^2}\Sym[\Ric(g)^{\perp}(\mathbf{n})\otimes\mathbf{n}]+2\Sym[\Ric(g)(\mathbf{n})\otimes\mathbf{n}]\\
&&+\frac{1}{\arrowvert \nabla f\arrowvert^2}\left[-\nabla^2\R_g+\frac{1}{2}\Sym[\nabla \R_g\otimes\nabla\ln\arrowvert \nabla f\arrowvert]-\Sym[\Ric(g)\otimes\arrowvert \nabla f\arrowvert\nabla\mathbf{n}]\right]\\
&=&\frac{\nabla_{\mathbf{n}}\R_g}{\arrowvert \nabla f\arrowvert}\mathbf{n}\otimes\mathbf{n}-\frac{1}{2}\frac{\R_g}{\arrowvert \nabla f\arrowvert^3}\Sym[\nabla^{\perp}\R_g\otimes\mathbf{n}]-\frac{1}{\arrowvert \nabla f\arrowvert}\Sym[\nabla \R_g\otimes\mathbf{n}]\\
&&+\frac{1}{\arrowvert \nabla f\arrowvert^2}\left[-\nabla^2\R_g+\frac{1}{2}\Sym[\nabla \R_g\otimes\nabla\ln\arrowvert \nabla f\arrowvert]-\Sym[\Ric(g)\otimes\arrowvert \nabla f\arrowvert\nabla\mathbf{n}]\right]
\end{eqnarray*}
A priori, by lemma \ref{elliptic}, as $\A_g^0(\Ric(g))$ is finite, $\nabla^2\R_g$ decays quadratically in the distance to a fixed point. Therefore, 
\begin{eqnarray}
\partial_tT+T=\textit{O}(f^{-2}).\label{evo-T-dim-3}
\end{eqnarray}
We conclude the first part of proposition \ref{est-sge-3d} by integrating this differential equality along the Morse flow generated by $\nabla f/\nfa^2$.

The second part of proposition \ref{est-sge-3d} follows by derivating (\ref{evo-T-dim-3}) to get, for any nonnegative integer $k$,
\begin{eqnarray*}
\partial_t\nabla^kT+\nabla^kT=\textit{O}(f^{-2-k/2}),
\end{eqnarray*}
by using the finiteness of all the invariants $(\A_g^i(\R_g))_{i\geq 0}$. Again, by integrating this differential equality, one gets $\nabla^kT=\textit{O}(f^{-2-k/2}).$\\

Finally, by the very definition of $T$, $$\Ric(g)=T+\frac{\R_g}{2}(\textbf{n}\otimes\textbf{n}),$$
which implies by the previous step that, for $k\geq 1$,
\begin{eqnarray*}
\limsup_{+\infty}f^{\frac{k+2}{2}}\arrowvert\nabla^k\Ric(g)\arrowvert\leq c_k\limsup_{+\infty}f^{\frac{k+2}{2}}\arrowvert\nabla^k\R_g\arrowvert<\infty.
\end{eqnarray*}

\end{proof}

\subsection{Estimates in higher dimensions : elliptic constraints at infinity} 

In this section, we investigate the asymptotic geometry of Ricci expanders whose asymptotic cones have an Einstein link. We first consider the case where the convergence is only $C^3$ :

\begin{theo}\label{dec-ric-c^3}
Let $(M^n,g,\nabla f)$ be an expanding gradient Ricci soliton. Assume the convergence to its asymptotic cone is at least $C^3$.
If the metric of the link has constant Ricci curvature, then
\begin{eqnarray*}
\Ric(g)-\frac{\R_g}{(n-1)}(g-\mathbf{n}\otimes\mathbf{n})=\textit{O}(f^{-3/2})
\end{eqnarray*}
\end{theo}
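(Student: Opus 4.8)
The plan is to transpose the three-dimensional argument of proposition~\ref{est-sge-3d} to arbitrary dimension, using the Einstein condition on the link to supply what, for $n=3$, was provided by the algebraic identity expressing $\Rm(g)$ through $\Ric(g)$. The organizing principle is that $T$ is precisely the tensor that vanishes on the model: for the cone over an Einstein link $(X,g_X)$ with $\Ric(g_X)=\kappa g_X$, the radial Ricci curvature is zero and the transverse Ricci equals $\tfrac{\R}{n-1}$ times the transverse metric, so $T\equiv 0$. Thus $T$ measures the deviation from the conical Einstein structure, and its rate of decay is governed by the order of convergence to the cone; the Einstein hypothesis on the link is exactly the input that kills the leading transverse term.

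\textbf{Decay setup and localization.} First I would record the consequences of the $C^3$ hypothesis. The Gauss formula and the comparison between $g$ and $g_{C(X)}$ give $\A^0_g(\Rm(g))<+\infty$ and $\A^1_g(\Rm(g))<+\infty$, whence $\Ric(g),\R_g=\textit{O}(f^{-1})$ and $\nabla\Ric(g),\nabla\R_g=\textit{O}(f^{-3/2})$, and theorem~\ref{asy-con-gen} applies to identify the limit cone. Two soliton identities then reduce the problem to the transverse, trace-adjusted part of $\Ric(g)$: by lemma~\ref{dec-rap-ric} the mixed component $\Ric(g)(\mathbf{n})=\textit{O}(f^{-2})$ is negligible at the order sought, and the soliton identity \eqref{equ:4} yields $\Rm(g)(\mathbf{n},\cdot,\mathbf{n},\cdot)=|\nabla f|^{-1}\div\Rm(g)(\mathbf{n},\cdot,\cdot)$, so the radial sectional curvature gains a factor $|\nabla f|^{-1}$ over a generic curvature term and is controlled through $\nabla\Rm(g)=\textit{O}(f^{-3/2})$.

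\textbf{Core step.} The heart of the proof is a transport identity for $T$ along the Morse flow $(\phi_t)_t$ generated by $\nabla f/|\nabla f|^2$, on which $f$ grows linearly. Differentiating the defining relation for $T$ by means of lemma~\ref{id-op-prel}, the contracted second Bianchi identity and \eqref{equ:4}, I would express $\nabla_{\mathbf{n}}\Ric(g)$ through a tangential divergence, the Hessian $\nabla^2 f=\Ric(g)+g/2$, and the radial curvature $\Rm(g)(\mathbf{n},\cdot,\mathbf{n},\cdot)$. Here the Einstein condition on the link is what forces the purely tangential curvature to reproduce exactly the conical term $\tfrac{\R_g}{n-1}(g-\mathbf{n}\otimes\mathbf{n})$ up to an admissible error, so that, as in dimension three (with the coefficient $+1$ coming from the $g/2$ part of the Hessian), the identity closes in the schematic form
\begin{eqnarray*}
\nabla_{\frac{\mathbf{n}}{|\nabla f|}}T+T=Q.
\end{eqnarray*}
Estimating $Q$ with the bounds of the previous step, the slowest contribution is the radial curvature term, which the $C^3$ hypothesis controls only at rate $\textit{O}(f^{-3/2})$; integrating the resulting scalar inequality $\partial_t|T|+|T|\le Cf^{-3/2}$ along the flow (the homogeneous part decaying exponentially, the particular part inheriting the forcing rate) gives $T=\textit{O}(f^{-3/2})$. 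Alternatively, since the section is built around elliptic constraints at infinity, one could instead realize this decay through corollary~\ref{coro-cruc-tens}, recasting $Q$ as the potential in a weighted elliptic inequality.

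\textbf{Main obstacle.} I expect the genuine difficulty to be the closing of this identity in the absence of the three-dimensional curvature decomposition. When $n=3$ the full curvature is algebraically determined by $\Ric(g)$, which is what both produces the clean $+T$ and rules out uncontrolled terms; in higher dimensions the Weyl tensor and the radial sectional curvatures are independent, and the entire burden is to show that the Einstein structure of the link, propagated through the soliton identities, confines these extra curvatures to the error at rate $\textit{O}(f^{-3/2})$ rather than the generic $\textit{O}(f^{-1})$. A secondary point requiring care is the loss of a half power relative to dimension three: only three derivatives of the metric are controlled, so the bound $\nabla\Rm(g)=\textit{O}(f^{-3/2})$ entering the radial curvature estimate cannot be improved, which is consistent with the sharper rate $\textit{O}(f^{-2})$ expected under $C^4$ convergence in theorem~\ref{dec-ric-c^4}.
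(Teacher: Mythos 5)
Your core step contains a genuine gap: in dimension $n>3$ the transport identity does \emph{not} close in the damped form $\nabla_{\mathbf{n}/|\nabla f|}T+T=Q$. In proposition \ref{est-sge-3d} the damping term $+T$ is not produced by the soliton structure, nor by any hypothesis on the link; it comes from the pointwise algebraic identity of three-dimensional geometry expressing $\Rm(g)(\mathbf{n},\cdot,\cdot,\mathbf{n})$ through $\Ric(g)$ (vanishing Weyl), which, combined with \eqref{equ:4} and the Bianchi identity, literally says that $T$ equals the radial curvature up to sign and radial Ricci terms --- that is how $T$ itself appears on the right-hand side of the transport equation. The Einstein hypothesis on the link cannot substitute for this: it is an asymptotic, qualitative statement about the limit (it gives $vT\rightarrow 0$, nothing more), and it yields no pointwise identity; if you instead use it quantitatively (``the tangential curvature reproduces the conical term up to an admissible error'') you are assuming the conclusion. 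What Bianchi plus \eqref{equ:4} actually give in higher dimensions is the \emph{undamped} equation $\partial_tT=\textit{O}(f^{-2})$ along the Morse flow (your error terms --- the radial curvature $\div\Rm(g)/|\nabla f|$, the terms $\nabla\Ric(g)(\mathbf{n},\cdot,\cdot)/|\nabla f|$ and $\Ric(g)(\mathbf{n})$ --- are indeed all $\textit{O}(f^{-2})$), and integrating an undamped transport equation loses one power: it yields only $T=\textit{O}(f^{-1})$, the generic rate. The only other source of damping, namely writing $\nabla_{\nabla f}\Ric(g)=-\Delta\Ric(g)-\Ric(g)-2\Rm(g)\ast\Ric(g)$ from \eqref{equ:6}, forces you to estimate $\Delta\Ric(g)$, i.e.\ two derivatives of curvature, which $C^3$ convergence does not control at the required rate $\textit{O}(f^{-2})$; that route is exactly theorem \ref{dec-ric-c^4} and costs the $C^4$ hypothesis. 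As a sanity check: were your damped identity with $\textit{O}(f^{-2})$ forcing available under $C^3$, you would conclude $T=\textit{O}(f^{-2})$ and theorem \ref{dec-ric-c^4} would be superfluous.

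The paper's proof is elliptic rather than ODE-based precisely to avoid ever estimating a second derivative of curvature. From \eqref{equ:6} one gets the exact identity $\Delta_fT+T=-2\Rm(g)\ast\Ric(g)+\frac{2}{n-1}|\Ric(g)|^2(g-\mathbf{n}\otimes\mathbf{n})+\frac{\R_g}{n-1}\Delta_f(\mathbf{n}\otimes\mathbf{n})+\frac{2}{n-1}\nabla\R_g\cdot\nabla(\mathbf{n}\otimes\mathbf{n})$; pairing with $T$, discarding $|\nabla T|^2\geq 0$, and using \eqref{equ:2} and \eqref{equ:4} to bound every radial term by $\textit{O}(v^{-2})|T|+\textit{O}(v^{-4})$, one arrives at $\Delta_vu+2u\geq-c_0v^{-1}u-c_1v^{-3}$ for $u:=|T|^2$ (the exponent $-3$, hence the $f^{-3/2}$ of the statement, arises from Young's inequality applied to $\textit{O}(v^{-2})|T|$). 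Lemma \ref{lemm-cruc} then gives $\sup_{M^n}v^3u<+\infty$, and the Einstein condition enters only through the qualitative input $\lim_{+\infty}v^2u=0$, which eliminates the slow indicial mode $u\sim v^{-2}$ of the homogeneous equation; the maximum principle upgrades this qualitative decay to the quantitative rate dictated by the forcing, which is exactly what an undamped ODE argument cannot do. Your parenthetical alternative (a weighted elliptic inequality) is thus the right track, but note that corollary \ref{coro-cruc-tens} does not apply as stated: $T$ satisfies no equation of the form $\Delta_fT=-\lambda T+\Rm(g)\ast T$, because the $\mathbf{n}\otimes\mathbf{n}$ and scalar-curvature parts create inhomogeneous terms; carrying out that computation and feeding the resulting scalar inequality into lemma \ref{lemm-cruc} is precisely the paper's proof.
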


\begin{proof}
Define $$T:=\Ric(g)-\frac{R_g}{(n-1)}(g-\mathbf{n}\otimes\mathbf{n}).$$ $T$ is well-defined outside a compact set (or even a point if $\Ric(g)\geq-(\delta/2)g$ for $\delta\in[0,1)$). Then, $\tr T=0$ and 
\begin{eqnarray*}
\Delta_f T+T&=&-2\Rm(g)\ast\Ric(g)+\frac{2}{n-1}\arrowvert\Ric(g)\arrowvert^2(g-\mathbf{n}\otimes\mathbf{n})\\
&&+\frac{\R_g}{n-1}\Delta_f(\mathbf{n}\otimes\mathbf{n})+\frac{2}{n-1}\nabla\R_g\cdot\nabla(\mathbf{n}\otimes\mathbf{n}).
\end{eqnarray*}
Now, recall that $\Rm(g)\ast\Ric(g)_{ij}:=\Rm(g)_{iklj}\Ric(g)_{kl}.$ Therefore,
\begin{eqnarray*}
<\Rm(g)\ast\Ric(g),T>&=&<\Rm(g)\ast T,T>+\frac{\R_g}{n-1}<\Rm(g)\ast(g-\mathbf{n}\otimes\mathbf{n}),T>\\
&=&<\Rm(g)\ast T,T>+\frac{\R_g}{n-1}<\Ric(g)-\Rm(g)(\mathbf{n},\cdot,\cdot,\mathbf{n}),T>\\
&=&<\Rm(g)\ast T,T>+\frac{\R_g}{n-1}\arrowvert T\arrowvert^2-\frac{\R_g^2}{(n-1)^2}T(\mathbf{n},\mathbf{n})\\
&&-\frac{\R_g}{n-1}<\Rm(g)(\mathbf{n},\cdot,\cdot,\mathbf{n}),T>.
\end{eqnarray*}
Now, according to lemma \ref{id-EGS}, we know that 
\begin{eqnarray*}
<\Rm(g)(\mathbf{n},\cdot,\cdot,\mathbf{n}),T>&=&\textit{O}(v^{-2})\arrowvert T\arrowvert\\
T(\mathbf{n},\mathbf{n})&=&\Ric(\mathbf{n},\mathbf{n})=\textit{O}(v^{-2})\\
\arrowvert T\arrowvert^2&=&\arrowvert\Ric(g)\arrowvert^2-\frac{\R_g^2}{n-1}+\frac{2\R_g}{n-1}\Ric(\mathbf{n},\mathbf{n}),
\end{eqnarray*}
Hence,
\begin{eqnarray*}
\frac{1}{2}\left(\Delta_f\arrowvert T\arrowvert^2+2\arrowvert T\arrowvert^2-2\arrowvert\nabla T\arrowvert^2\right)&=&-2<\Rm(g)\ast T,T>-\frac{2}{n-1}(\R_g+\Ric(g)(\mathbf{n},\mathbf{n}))\arrowvert T\arrowvert^2\\
&&+\frac{4\R_g}{(n-1)^2}\Ric(\mathbf{n},\mathbf{n})^2+\frac{2\R_g}{n-1}\Rm(g)_{\textbf{n}..\textbf{n}}\ast T\\
&&+\frac{\R_g}{n-1}<\Delta_f(\mathbf{n}\otimes\mathbf{n}),T>+\frac{2}{n-1}<\nabla\R_g\cdot\nabla(\mathbf{n}\otimes\mathbf{n}),T>.
\end{eqnarray*}
Now, since, $$\nabla\textbf{n}=\frac{\nabla^2f-\nabla^2f(\textbf{n})\otimes\textbf{n}}{\arrowvert\nabla f\arrowvert},$$
\begin{eqnarray*}
<\nabla\R_g\cdot\nabla(\mathbf{n}\otimes\mathbf{n}),T>&=&2<\nabla_{\nabla \R_g}\mathbf{n},\Ric(g)(\mathbf{n})>\\
&=&\textit{O}(v^{-4}),\\
<\nabla_{\nabla f}(\mathbf{n}\otimes\mathbf{n}),T>&=&2<\nabla_{\nabla f}\mathbf{n},\Ric(g)(\mathbf{n})>\\
&=&2<\Ric(g)(\textbf{n})-\Ric(g)(\textbf{n},\textbf{n})\textbf{n},\Ric(g)(\mathbf{n})>=\textit{O}(v^{-4}),\\
<\Delta(\mathbf{n}\otimes\mathbf{n}),T>&=&2<\Delta(\mathbf{n}),\Ric(g)(\mathbf{n})>+2<\nabla\mathbf{n}\ast\nabla\mathbf{n},T>\\
&=&\textit{O}(v^{-1})\arrowvert T\arrowvert.
\end{eqnarray*}
Finally, we get : 
\begin{eqnarray*}
\frac{1}{2}\left(\Delta_v\arrowvert T\arrowvert^2+2\arrowvert T\arrowvert^2-2\arrowvert\nabla T\arrowvert^2\right)&=&\textit{O}(v^{-1})\arrowvert T\arrowvert^2+\textit{O}(v^{-2})\arrowvert T\arrowvert+\textit{O}(v^{-4})\\
&=&\textit{O}(v^{-1})\arrowvert T\arrowvert^2+\textit{O}(v^{-3}),
\end{eqnarray*}
where we used the Young inequality in the last line. Define now $u:=\arrowvert T\arrowvert^2$. Then, there are positive constants $c_0, c_1$ such that
\begin{eqnarray*}
\Delta_vu+2u\geq -c_0v^{-1}u-c_1v^{-3}.
\end{eqnarray*}
 We apply lemma \ref{lemm-cruc} to $u$, $\lambda=-2$, $\alpha=1$ and $Q=c_1v^{-3}$ to get that $\sup_{M^n}v^{3}u<+\infty$ since $\lim_{+\infty}v^2u=0$ by assumption on $T$, which is excatly the expected decay on $T$.

\end{proof}

The decay of the tensor $T$ considered in theorem \ref{dec-ric-c^3} can be sharpened as soon as the convergence to the asymptotic cone is at least $C^4$ :
\begin{theo}\label{dec-ric-c^4}
Let $(M^n,g,\nabla f)$ be an expanding gradient Ricci soliton. Assume the convergence to its asymptotic cone is at least $C^4$.
\begin{enumerate}
\item If the metric of the link has constant scalar curvature $(n-1)(n-2)+\R_{\infty}$, then
\begin{eqnarray*}
\R_g=\frac{\R_{\infty}}{f}+\textit{O}(f^{-2}),
\end{eqnarray*}

\item
If the metric of the link has constant Ricci curvature, then
\begin{eqnarray*}
T:=\Ric(g)-\frac{\R_g}{(n-1)}(g-\mathbf{n}\otimes\mathbf{n})=\textit{O}(f^{-2})
\end{eqnarray*}
\item If the metric of the link has constant curvature, then
\begin{eqnarray*}
\Rm(g)-\frac{R_g}{(n-1)(n-2)}(g-\mathbf{n}\otimes\mathbf{n})\odot(g-\mathbf{n}\otimes\mathbf{n})=\textit{O}(f^{-2}),
\end{eqnarray*}
\end{enumerate}
\end{theo}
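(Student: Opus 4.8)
The plan is to treat the three statements by the single scheme underlying the proofs of Theorems \ref{asy-con-gen} and \ref{dec-ric-c^3}. To each case I attach a tensor $S$ measuring the deviation of $g$ from the relevant model cone geometry, I produce a weighted elliptic identity of the schematic form $\Delta_f S+S=F$, and I insert the squared norm $|S|^2$ into the a priori estimate of Lemma \ref{lemm-cruc}. The whole gain of the $C^4$ hypothesis over the $C^3$ one of Theorem \ref{dec-ric-c^3} is that it makes $\A^k_g(\Ric(g))$ and $\A^k_g(\R_g)$ finite up to $k=2$, so that $\nabla^2\Ric(g)$ and $\nabla^2\R_g$ decay like $v^{-2}$ instead of merely $v^{-1}$, the rate furnished by Shi's Lemma \ref{elliptic} under quadratic curvature decay. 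This extra power is precisely what pushes the forcing from the $O(v^{-3})$ of Theorem \ref{dec-ric-c^3}, which only yields $O(f^{-3/2})$, down to $O(v^{-4})$ for the squared deviation, hence to $O(f^{-2})$ for the deviation itself.

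I would do the scalar case (1) first, both because it is cleanest and because it produces the constant $\R_\infty$ entering the other two. Set $w:=\R_g-\R_\infty v^{-1}$. From \eqref{equ:7} one has $\Delta_f\R_g+\R_g=-2|\Ric(g)|^2$, while \eqref{equ:8} gives $\Delta_f(v^{-1})+v^{-1}=2v^{-3}|\nabla v|^2$; subtracting, and using $|\Ric(g)|^2=O(v^{-2})$ together with $|\nabla v|^2<v$ from \eqref{inequ:1}, yields $\Delta_f w+w=O(v^{-2})$. The $C^2$ part of the convergence together with the formula relating the scalar curvature of a cone to that of its link (of constant scalar curvature $(n-1)(n-2)+\R_\infty$) pins the leading coefficient, giving $\lim_{+\infty}v\R_g=\R_\infty$, i.e. $\limsup_{+\infty}v\,|w|=0$. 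Applying Lemma \ref{lemm-cruc}(2) to $u_\epsilon:=\sqrt{w^2+\epsilon^2}$ with $\lambda=1$, $\alpha=1$ and $Q=O(v^{-2})$ ($\beta=2>\lambda$), and letting $\epsilon\to 0$, then gives $w=O(v^{-2})$, which is (1).

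For (2) and (3) I would repeat, respectively for $T=\Ric(g)-\frac{\R_g}{n-1}(g-\mathbf{n}\otimes\mathbf{n})$ and for $S=\Rm(g)-\frac{\R_g}{(n-1)(n-2)}(g-\mathbf{n}\otimes\mathbf{n})\odot(g-\mathbf{n}\otimes\mathbf{n})$, the computation of Theorem \ref{dec-ric-c^3}: expand $\Delta_f S+S$ using \eqref{equ:5}--\eqref{equ:7}, and isolate the genuine reaction terms (all of the form $O(v^{-1})|S|^2$, coming from $\Rm(g)\ast S$ and from the $\R_g$-coefficients) from the forcing. The decisive point is to retain the algebraic cancellations rather than to bound crudely: $T$ is trace-free with $T(\mathbf{n},\mathbf{n})=\Ric(g)(\mathbf{n},\mathbf{n})=O(v^{-2})$ (and $S$ has vanishing admissible traces with $S(\mathbf{n},\cdot,\cdot,\mathbf{n})=O(v^{-2})$ by \eqref{equ:4} and Lemma \ref{dec-rap-ric}), so that the leading radial part $\nabla\mathbf{n}=\frac{g-\mathbf{n}\otimes\mathbf{n}}{2|\nabla f|}+O(v^{-3/2})$ contracts against the deviation to produce $O(v^{-3})$ rather than $O(v^{-1})|S|$; combined with the $C^4$ decay of $\nabla^2\R_g$ and $\nabla^2\Ric(g)$ in the terms where $\Delta_f$ falls on $\mathbf{n}\otimes\mathbf{n}$, one is led to $\Delta_f|S|^2+2|S|^2\geq -c_0v^{-1}|S|^2-c_1v^{-4}$. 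Since the link being Einstein (resp. a space form) forces the leading order to match, i.e. $\limsup_{+\infty}v\,|S|=0$, Lemma \ref{lemm-cruc}(2) applied with $u=|S|^2$, $\lambda=2$, $\alpha=1$ and $Q=O(v^{-4})$ ($\beta=4$) gives $|S|^2=O(v^{-4})$, that is $S=O(f^{-2})$.

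I expect the main obstacle to lie in case (3). There the full curvature obeys the quadratic evolution \eqref{equ:5}, so the term $\Rm(g)\ast\Rm(g)$ must be reorganised as $\Rm(g)\ast S$ (a legitimate $O(v^{-1})|S|^2$ reaction) plus a model self-interaction, and one must check that the constant-curvature structure of the link makes the space-form model $\frac{\R_g}{(n-1)(n-2)}(g-\mathbf{n}\otimes\mathbf{n})\odot(g-\mathbf{n}\otimes\mathbf{n})$ an approximate solution of the very same equation, with residual $\Delta_f(\text{model})+(\text{model})+(\text{model})\ast(\text{model})$ contracting against $S$ to produce only $O(v^{-4})$. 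The bookkeeping of the terms in which $\Delta_f$ hits $\mathbf{n}\otimes\mathbf{n}$ — namely the contributions of $\Delta\mathbf{n}$, $\nabla^2\mathbf{n}$ and $\nabla\R_g\cdot\nabla\mathbf{n}$ — and the verification that each prospective $O(v^{-1})|S|$ and $O(v^{-2})|S|$ forcing term is in fact killed, modulo $O(v^{-4})$, by the trace-free and radial-free structure of $S$, is where the extra derivative of the $C^4$ hypothesis and the space-form constraint are genuinely consumed.
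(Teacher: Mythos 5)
Your strategy can be made to work, but it is genuinely not the paper's. The paper proves all three statements by a \emph{first-order transport argument}, with no maximum principle at all: setting $\R_{\infty}:=\lim_{+\infty}v\R_g$, the identities (\ref{equ:1}), (\ref{equ:7}) and $\Delta_fv=v$ give
\begin{eqnarray*}
\nabla_{\nabla f}(v\R_g-\R_{\infty})=-\Delta v\,\R_g-v\left(\Delta\R_g+2\arrowvert\Ric(g)\arrowvert^2\right)=\textit{O}(v^{-1}),
\end{eqnarray*}
and integrating along the flow of $\nabla f/\arrowvert\nabla f\arrowvert^2$ from infinity yields $v\R_g-\R_{\infty}=\textit{O}(v^{-1})$; cases (2) and (3) then reduce, via case (1), to the \emph{identical} computation for $v\Ric(g)-\frac{\R_{\infty}}{n-1}(g-\mathbf{n}\otimes\mathbf{n})$ and for the curvature analogue, using (\ref{equ:6}), (\ref{equ:5}) and $\nabla_{\nabla f}\mathbf{n}=\textit{O}(v^{-2})$. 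This is exactly where the $C^4$ hypothesis is consumed: it furnishes $\Delta\R_g,\ \Delta\Ric(g),\ \Delta\Rm(g)=\textit{O}(v^{-2})$, hence integrability after multiplication by $v$. The decisive advantage over your route is that the quadratic nonlinearity needs no structure whatsoever: $v\,\Rm(g)\ast\Rm(g)=\textit{O}(v^{-1})$ is simply an integrable forcing term, so case (3) costs nothing beyond case (2).

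Within your elliptic route, case (1) is complete and even cheaper than the paper's (the identity $\Delta_f\R_g+\R_g=-2\arrowvert\Ric(g)\arrowvert^2$ produces the $\textit{O}(v^{-2})$ forcing with no curvature-derivative input; just take the limits in the order $\epsilon\to0$ then $t\to+\infty$, as in corollary \ref{coro-cruc-tens}), and your case (2) cancellation is right: by $\tr T=0$ and $T(\mathbf{n},\mathbf{n})=\Ric(g)(\mathbf{n},\mathbf{n})=\textit{O}(v^{-2})$ (lemma \ref{dec-rap-ric}), the term $\frac{\R_g}{n-1}\langle\nabla\mathbf{n}\ast\nabla\mathbf{n},T\rangle$ responsible for the $\textit{O}(v^{-3})$ loss in theorem \ref{dec-ric-c^3} indeed contracts to $\textit{O}(v^{-4})+\textit{O}(v^{-1})\arrowvert T\arrowvert^2$. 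Two criticisms, however. First, your account of where $C^4$ enters fits the paper's proof, not yours: in your computation no second derivative of the curvature ever appears, since $\Delta\mathbf{n}$ reduces through $\Delta\nabla f=-\Ric(g)(\nabla f)$ (a consequence of (\ref{equ:1}) and (\ref{equ:2})) to zeroth and first order curvature data; the gain comes purely from the algebraic cancellations, so your cases (1) and (2) actually run under $C^2$, respectively $C^3$, convergence, and the claimed role of $\nabla^2\Ric(g),\nabla^2\R_g$ is a misattribution. Second, case (3) remains a programme in your framework, and it is the only genuinely delicate point of your approach: you must verify that after substituting $\Rm(g)=S+P$, with $P$ the space-form model, the model self-interaction $P\ast P=\textit{O}(v^{-2})$ and the residual of $\Delta_fP+P$ meet $S$ only through trace directions and radially-indexed directions, against which $S$ contracts to $\textit{O}(v^{-2})$ — using $S(\mathbf{n},\cdot,\cdot,\cdot)=\div\Rm(g)(\cdot,\cdot,\cdot)/\arrowvert\nabla f\arrowvert=\textit{O}(v^{-2})$ from (\ref{equ:4}), the scalar-trace cancellation, and the Ricci-type trace of $S$ being $T=\textit{O}(v^{-2})$ by case (2). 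These checks do go through, but you have not performed them, whereas in the paper's transport argument they are never needed.
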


\begin{proof}

First of all, denote by $\R_{\infty}$ the limit of the rescaled curvature, i.e. $\R_{\infty}:=\lim_{+\infty}v\R_g$. Then, by the soliton identities given by lemma \ref{id-EGS},
\begin{eqnarray*}
\nabla_{\nabla f}(v\R_g-\R_{\infty})&=&\arrowvert\nabla v\arrowvert^2\R_g+v\nabla_{\nabla f}\R_g\\
&=&-\Delta v\R_g-v(\Delta\R_g+2\arrowvert\Ric(g)\arrowvert^2)\\
&=&\textit{O}(v^{-1}).
\end{eqnarray*}
In particular, if we integrate the previous estimate along the flow generated by $\nabla f/\arrowvert\nabla f\arrowvert^2$, then we get, 
\begin{eqnarray*}
v\R_g-\R_{\infty}=\textit{O}(v^{-1}).
\end{eqnarray*}

The same procedure applies to the Ricci curvature if the metric of the link of the asymptotic cone has constant Ricci curvature. Indeed, by the previous estimate on the scalar curvature,
\begin{eqnarray*}
vT=v\Ric(g)-\frac{\R_{\infty}}{n-1}(g-\mathbf{n}\otimes\mathbf{n})+\textit{O}(v^{-1}).
\end{eqnarray*}
Therefore, it suffices to prove that $$v\Ric(g)-\frac{\R_{\infty}}{n-1}(g-\mathbf{n}\otimes\mathbf{n})=\textit{O}(v^{-1}).$$
Again, by lemma \ref{id-EGS},
\begin{eqnarray*}
\nabla_{\nabla f}\left(v\Ric(g)-\frac{\R_{\infty}}{n-1}(g-\mathbf{n}\otimes\mathbf{n})\right)&=&-v(\Delta\Ric(g)+2\Rm(g)\ast\Ric(g))\\&&-\Delta v\Ric(g)+\frac{\R_{\infty}}{n-1}\begin{bmatrix}(\nabla_{\nabla f}\mathbf{n})\otimes\mathbf{n}+\mathbf{n}\otimes(\nabla_{\nabla f}\mathbf{n})\end{bmatrix}\\
&=&\textit{O}(v^{-1}).
\end{eqnarray*}
The result follows by integrating along the Morse flow of the potential function $f$. The same procedure applies to the curvature tensor if the metric of the link of the asymptotic cone has constant curvature.
\end{proof}

\subsection{Positively curved expanding gradient Ricci solitons}
Even if proposition \ref{inf-bound-scal} belongs to global estimates, we find it more convenient to give its proof now :

\begin{proof}[Proof of proposition \ref{inf-bound-scal}]
Recall first that $\Delta_fv=v$. Then, by equation (\ref{equ:7}), one has
\begin{eqnarray*}
\Delta_f(v\R_g)=-2\arrowvert\Ric(g)\arrowvert^2v+2<\nabla(v\R_g),\nabla\ln v>-2(v\R_g)\arrowvert\nabla\ln v\arrowvert^2\leq 0.
\end{eqnarray*}
One gets the expected estimate by applying the maximum principle to sub level set $\{v\leq t\}$ with $t$ tending to $+\infty$.

\end{proof}

\begin{rk}
\begin{itemize}
\item The proof of proposition \ref{inf-bound-scal} does not require the Ricci curvature to be nonnegative, one only needs nonnegative scalar curvature plus the fact that $v$ is an exhaustion function.
\item In particular, proposition \ref{inf-bound-scal} gives a positive lower bound for the rescaled scalar curvature as soon as the metric $g_X$ of the link of the asymptotic cone $C(X)$ satisfies $\R_{g_X}>(n-1)(n-2)$ on $X$.
\end{itemize}
\end{rk}

The next proposition gives an asymptotic lower bound on the Ricci curvature in case the asymptotic cone is sufficiently positively curved : 
\begin{prop}\label{inf-bound-ric}
Let $(M^n,g,\nabla f)$ be an expanding gradient Ricci soliton with positive Ricci curvature that converges (in the $C^4$ sense at least) to its asymptotic cone $(C(X),dr^2+r^2g_X)$ where $\Ric(g_X)> (n-2)g_X$. Then, for some $p\in M^n$,

\begin{eqnarray*}
\liminf_{r\rightarrow +\infty}r^4\inf_{\partial B(p,r)}\Ric(g)>0.
\end{eqnarray*}
\end{prop}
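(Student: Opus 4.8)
The plan is to reduce the statement to a lower bound on the radial Ricci curvature $\Ric(g)(\mathbf{n},\mathbf{n})$, where $\mathbf{n}:=\nabla f/|\nabla f|$, and then to upgrade it to a bound on the smallest eigenvalue of $\Ric(g)$ along each distance sphere. First I would decompose $\Ric(g)$, in the frame adapted to $\mathbf{n}$, into its radial part $\Ric(g)(\mathbf{n},\mathbf{n})$, its mixed part $\Ric(g)(\mathbf{n},\cdot)$, and its spherical part $\Ric(g)|_{\mathbf{n}^{\perp}}$. By Lemma \ref{dec-rap-ric} the mixed part is $\textit{O}(f^{-2})=\textit{O}(v^{-2})$; since the convergence to the cone is at least $C^{2}$ (available from the $C^4$ hypothesis) and $\Ric(g_X)>(n-2)g_X$, the rescaled spherical part converges to $\Ric(g_X)-(n-2)g_X>0$ on the compact link, so it is bounded below by $c/v$ for some $c>0$. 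The delicate direction is the radial one: it decays like $v^{-2}$, i.e. exactly at the rate $r^{-4}$ dictated by the statement.

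The positivity input comes from the scalar curvature. Tracing $\Ric(g_X)>(n-2)g_X$ gives $\R_{g_X}>(n-1)(n-2)$, with a uniform gap by compactness of $X$. Since $\frac{r_p^2}{4}\R_g$ converges in every direction (by $C^2$ convergence) to $\frac14(\R_{g_X}-(n-1)(n-2))$, we obtain $\liminf_{y\to+\infty}\frac{r_p(y)^2}{4}\R_g(y)>0$. Proposition \ref{inf-bound-scal} then yields the global bound $\left(\sqrt{\mu(g)}+r_p/2\right)^2\R_g\geq m_0>0$, hence $\R_g\geq c_0/v$ for large $v$ after comparing $v$ with $r_p^2$ through (\ref{inequ:3}).

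To pass from the scalar bound to the radial Ricci bound I would use the first order identity (\ref{equ:2}), which gives $\Ric(g)(\mathbf{n},\mathbf{n})=-\nabla_{\nabla f}\R_g/(2|\nabla f|^2)$. Rewriting $\nabla_{\nabla f}\R_g$ through $\nabla_{\nabla f}(v\R_g)$ and using $\nabla v=\nabla f$ produces
\[
\Ric(g)(\mathbf{n},\mathbf{n})=\frac{\R_g}{2v}-\frac{\nabla_{\nabla f}(v\R_g)}{2v\,|\nabla f|^2}.
\]
The second term is $\textit{O}(v^{-3})$, since $\nabla_{\nabla f}(v\R_g)=\textit{O}(v^{-1})$ (the estimate established at the beginning of the proof of Theorem \ref{dec-ric-c^4}, which only uses quadratic curvature decay together with $\Delta\R_g=\textit{O}(v^{-2})$ from the local Shi estimates of Lemma \ref{elliptic}) and $|\nabla f|^2\sim v$. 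Combined with $\R_g\geq c_0/v$ this gives $\Ric(g)(\mathbf{n},\mathbf{n})\geq c_0/(4v^2)$ for $v$ large.

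Finally I would combine the three pieces. For a unit vector $X=\alpha\mathbf{n}+Y$ with $Y\perp\mathbf{n}$ and $\alpha^2+|Y|^2=1$, the bounds $\Ric(g)(\mathbf{n},\mathbf{n})\geq c_0/(4v^2)$, $|\Ric(g)(\mathbf{n},Y)|\leq C v^{-2}|Y|$ and $\Ric(g)(Y,Y)\geq (c/v)|Y|^2$ give, after a Young inequality that absorbs the mixed term into the spherical term (the cross term of size $v^{-2}|\alpha||Y|$ splits into a piece controlled by $v^{-1}|Y|^2$ and a lower order piece of size $v^{-3}\alpha^2$), a uniform lower bound $\Ric(g)(X,X)\geq c'/v^2$ on each sphere for $v$ large. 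Using $v\leq C r_p^2$ from (\ref{inequ:3}) converts this into $r^4\inf_{\partial B(p,r)}\Ric(g)\geq c''>0$, proving the claim. I expect the main obstacle to be the radial estimate of the third paragraph: the radial Ricci curvature is (essentially) the smallest eigenvalue and decays at the critical rate $v^{-2}$, so everything hinges on controlling the balance between the genuinely positive term $\R_g/(2v)$ and the a priori comparable-looking error $\nabla_{\nabla f}(v\R_g)/(2v|\nabla f|^2)$.
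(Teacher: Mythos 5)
Your proof is essentially the paper's: the paper likewise reduces to the radial direction using the positivity of $\Ric(g_X)-(n-2)g_X$ (your last paragraph just spells out, via Lemma \ref{dec-rap-ric} and Young's inequality, the reduction the paper compresses into one sentence), and for the radial direction it combines the identity (\ref{equ:2}) with the trace equation (\ref{equ:7}) to write
\begin{eqnarray*}
2v^2\Ric(g)(\mathbf{n},\mathbf{n})=\frac{v^2}{\arrowvert\nabla f\arrowvert^2}\left(\Delta\R_g+2\arrowvert\Ric(g)\arrowvert^2+\R_g\right),
\end{eqnarray*}
then uses $\liminf_{+\infty}v\R_g>0$ together with the decay of $\Delta\R_g$. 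Your detour through $\nabla_{\nabla f}(v\R_g)$ is algebraically equivalent: substituting $\nabla_{\nabla f}(v\R_g)=-\Delta v\,\R_g-v(\Delta\R_g+2\arrowvert\Ric(g)\arrowvert^2)$ into your displayed formula and using $\arrowvert\nabla f\arrowvert^2+\Delta v=v$ recovers exactly the identity above.

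One justification must be corrected, and it sits precisely at the point you yourself call delicate. The decay $\Delta\R_g=\textit{O}(v^{-2})$ does \emph{not} follow from the local Shi estimates of Lemma \ref{elliptic} plus quadratic curvature decay: Shi-type estimates propagate $\arrowvert\Rm(g)\arrowvert=\textit{O}(r^{-2})$ to $\arrowvert\nabla^k\Rm(g)\arrowvert=\textit{O}(r^{-2})$ for every $k$, with no gain of a factor $r^{-1}$ per derivative; this is exactly how the paper itself uses Lemma \ref{elliptic} (for instance in the proof of Proposition \ref{est-sge-3d}, where it only asserts that $\nabla^2\R_g$ decays quadratically). If all you had were $\Delta\R_g=\textit{O}(v^{-1})$, then $\nabla_{\nabla f}(v\R_g)=\textit{O}(1)$, your error term would be $\textit{O}(v^{-2})$ --- the same order as the main term $\R_g/(2v)$ and with no sign control --- and the argument would collapse. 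What makes the step legitimate is the hypothesis of $C^4$ convergence to the asymptotic cone, which directly yields $\arrowvert\nabla^2\Rm(g)\arrowvert=\textit{O}(r^{-4})$ (the derivatives of the curvature inherit the scaling of the cone), hence $\Delta\R_g=\textit{O}(v^{-2})$; this is also what the estimate you quote from Theorem \ref{dec-ric-c^4} actually relies on. With that re-justification your proof is complete and coincides with the paper's.
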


\begin{proof}

By assumption on the positivity of the Ricci curvature of the metric of the link of the cone, it is sufficient to prove it for the radial direction. By the soliton identity (\ref{equ:2}),

 \begin{eqnarray*}
2v^2 \Ric(g)(\mathbf{n},\mathbf{n})&=&-\frac{v^2}{\arrowvert \nabla f\arrowvert^2}<\nabla \R_g,\nabla f>\\
&=&\frac{v^2}{\arrowvert\nabla f\arrowvert^2}\left(\Delta \R_g+2\arrowvert\Ric (g)\arrowvert^2+\R_g\right).
\end{eqnarray*}
In particular, by using the decay of $\Delta\R_g$ and the fact that $\liminf_{+\infty}v\R_g>0$, one gets $\inf_{M^n}v^2\Ric(g)(\mathbf{n},\mathbf{n})>0$.

Hence the result.

\end{proof}

We end this subsection by proving the analogue of proposition \ref{inf-bound-ric} for the curvature operator.

\begin{prop}\label{inf-bound-op}
Let $(M^n,g,\nabla f)$ be an expanding gradient Ricci soliton with positive curvature operator that converges (in the $C^4$ sense) to its asymptotic cone $(C(X),dr^2+r^2g_X)$ where $\Rm(g_X)> \Id_{\Lambda^2TX}$. Then, for $p\in M^n$,

\begin{eqnarray*}
\liminf_{r\rightarrow +\infty}r^4\inf_{\partial B(p,r)}\Rm(g)>0.
\end{eqnarray*}

\end{prop}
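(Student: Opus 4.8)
The plan is to imitate the proof of proposition \ref{inf-bound-ric}, the new feature being that the part of the curvature operator that decays fastest is now a whole symmetric form rather than a single number. Set $\mathbf{n}:=\nf/\nfa$ and $V:=\mathbf{n}^{\perp}$, and decompose $\Lambda^2TM=\Lambda^2V\oplus(\mathbf{n}\wedge V)$ into its spherical and mixed parts. By the $C^4$ convergence to the cone and the hypothesis $\Rm(g_X)>\Id$, the spherical block satisfies $v\,\Rm(g)|_{\Lambda^2V}\to\tfrac14(\Rm(g_X)-\Id)>0$, so it is bounded below by $c/v$ outside a compact set; the off-diagonal block equals $\nfa^{-1}\div\Rm(g)$ and is therefore $O(v^{-2})$, since $\div\Rm(g)=O(v^{-3/2})$. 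A completing-the-square argument, in which the spherical block of size $c/v$ dominates the $O(v^{-2})$ cross terms, shows that the lowest eigenvalue of $\Rm(g)$ is, up to an $O(v^{-3})$ error, the lowest eigenvalue of the mixed block, i.e. of the radial curvature form $\mathcal{K}(X,Y):=\Rm(g)(\mathbf{n},X,\mathbf{n},Y)$ on $V$. The statement therefore reduces to proving $\mathcal{K}\geq (c/v^2)\,g|_V$ for some $c>0$.

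To bound $\mathcal{K}$ from below I would produce the exact analogue of the identity used for $\Ric(g)(\mathbf{n},\mathbf{n})$. By the soliton identity (\ref{equ:4}), for $X,Y\in V$,
$$\nfa^{2}\,\mathcal{K}(X,Y)=\Rm(g)(\nf,X,\nf,Y)=\div\Rm(g)(\nf,X,Y),$$
and the contracted second Bianchi identity expresses the right-hand side through $\nabla\Ric(g)$, namely (with the sign convention fixed below) as $(\nabla_{\nf}\Ric(g))(X,Y)-(\nabla_X\Ric(g))(\nf,Y)$. For the first term I would use the evolution equation (\ref{equ:6}) in the form $\nabla_{\nf}\Ric(g)=-\Ric(g)-\Delta\Ric(g)-2\Rm(g)\ast\Ric(g)$, whose last two terms are $O(v^{-2})$. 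For the second term, (\ref{equ:2}) gives $\Ric(g)(\nf)=-\tfrac12\nabla\R_g$, so after expanding by the product rule only the Hessian contribution survives at leading order: with $\nabla_X\nf=\nabla^2f(X)=\Ric(g)(X)+X/2$ one finds $(\nabla_X\Ric(g))(\nf,Y)=-\tfrac12\Ric(g)(X,Y)+O(v^{-2})$. Collecting the terms gives
$$\mathcal{K}(X,Y)=\frac{1}{2\,\nfa^{2}}\,\Ric(g)(X,Y)+O(v^{-3}),$$
and the overall sign is pinned down by taking the trace over $V$: since $\tr_V\mathcal{K}=\Ric(g)(\mathbf{n},\mathbf{n})$, the expression must be compatible with $v^2\Ric(g)(\mathbf{n},\mathbf{n})\to\R_{\infty}/2>0$ from proposition \ref{inf-bound-ric}.

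Since $\Rm(g_X)>\Id$ forces $\Ric(g_X)>(n-2)g_X$, the $C^4$ convergence yields that the spherical Ricci form is bounded below, $\Ric(g)|_V\geq (c/v)\,g|_V$ outside a compact set; inserting this into the last display and absorbing the $O(v^{-3})$ remainder gives $\mathcal{K}\geq (c/4v^2)\,g|_V$, which is precisely what the reduction of the first paragraph requires. As $v$ is equivalent to $r_p^2/4$ by (\ref{inequ:3}), this is the claimed $\liminf_{r\to+\infty}r^4\inf_{\partial B(p,r)}\Rm(g)>0$. I expect the main obstacle to be the middle step: keeping track of the signs and index conventions through the Bianchi and evolution manipulations while controlling every remainder at order $O(v^{-2})$, which rests on $\nabla^2\Rm(g)=O(v^{-2})$ and hence on the finiteness of the invariants $\A_g^k(\Rm(g))$ guaranteed by the $C^4$ convergence. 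A secondary technical point is to make the completing-the-square reduction rigorous, checking that a cross block of size $O(v^{-2})$ against a spherical block of size $c/v$ perturbs the mixed eigenvalues only at order $O(v^{-3})$.
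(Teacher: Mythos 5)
Your proposal is correct and follows essentially the same route as the paper's own proof: the same splitting of $\Lambda^2TM$ into spherical and mixed parts, the cross terms controlled by the soliton identity (\ref{equ:4}) and the decay $\div\Rm(g)=\textit{O}(v^{-3/2})$, and the radial block computed via the contracted Bianchi identity together with (\ref{equ:2}), (\ref{equ:6}) and the soliton equation, yielding the same leading term $\Ric(g)|_V/(2\arrowvert\nabla f\arrowvert^2)$ before the final Young-inequality absorption. The only cosmetic differences are your slot convention for the radial form and your fixing of the overall sign by the trace consistency check rather than by direct bookkeeping, which the paper carries out explicitly.
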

\begin{proof}
 Any bivector $\alpha\in \Lambda^2TM$ can be written as $\alpha=\alpha'+\beta\wedge\mathbf{n}$, where $\alpha'$ is a linear combination of bivectors spanning $2$-planes orthogonal to $\mathbf{n}$ and $\beta$ is a vector orthogonal to $\mathbf{n}$. Then, by the soliton identity (\ref{equ:4})
 \begin{eqnarray*}
\Rm(g)(\alpha,\alpha)&=&\Rm(g)(\alpha',\alpha')+2\Rm(g)(\alpha',\beta\wedge\mathbf{n})+\Rm(g)(\beta\wedge\mathbf{n},\beta\wedge\mathbf{n})\\
&\geq&\Rm(g)(\alpha',\alpha')-c(n)v^{-1/2}\arrowvert\div\Rm(g)\arrowvert\arrowvert\alpha'\arrowvert\arrowvert\beta\arrowvert+\Rm(g)(\beta,\mathbf{n},\mathbf{n},\beta).
\end{eqnarray*}
 Now,

\begin{eqnarray*}
\Rm(g)(\nabla f,U,V,\nabla f)=-\div\Rm(g)(\nabla f, U, V)=-\nabla_{\nabla f}\Ric(g)(U,V)+\nabla_U\Ric(g)(\nabla f,V),
\end{eqnarray*}
for any vector $U$ and $V$.
On the other hand, by the soliton identities given by  lemma \ref{id-EGS},
\begin{eqnarray*}
\nabla_U\Ric(g)(\nabla f,V)&=&U\cdot\Ric(g)(\nabla f,V)-\Ric(g)(\nabla_U\nabla f,V)-\Ric(g)(\nabla f,\nabla_UV)\\
&=&<\nabla_U(\Ric(g)(\nabla f)),V>-\Ric(g)\left(\frac{U}{2}+\Ric(g)(U),V\right)\\
&=&-\frac{\nabla^2\R_g}{2}(U,V)-\Ric(g)\left(\frac{U}{2}+\Ric(g)(U),V\right).
\end{eqnarray*}
Therefore,
\begin{eqnarray*}
\Rm(g)(\mathbf{n},U,V,\mathbf{n})&=&\frac{1}{2\arrowvert\nabla f\arrowvert^2}\left(2\Delta\Ric(g)+\Ric(g)+4\Rm(g)\ast\Ric(g)\right)(U,V)\\
&&-\frac{1}{2\arrowvert\nabla f\arrowvert^2}\left(\nabla^2\R_g(U,V)+2\Ric(g)\otimes\Ric(g)\right)(U,V).
\end{eqnarray*}
Hence, by the decay of the curvature and the assumption on the positivity of the curvature operator of the metric of the link of the cone,  $$\inf_{x\in M^n}v^2(x)\min_{\beta\perp\mathbf{n}}\Rm(g)(x)(\beta\wedge\mathbf{n},\beta\wedge\mathbf{n})>0.$$
Finally, we get, by the Young inequality,

\begin{eqnarray*}
\Rm(g)(\alpha,\alpha)&\geq&\frac{C}{v}\arrowvert\alpha'\arrowvert^2-\textit{O}(v^{-2})\arrowvert\alpha'\arrowvert\arrowvert\beta\arrowvert+\frac{C}{v^2}\arrowvert\beta\arrowvert^2\\
&\geq&\frac{C}{v}\left(1-\textit{O}(v^{-1})\right)\arrowvert\alpha'\arrowvert^2+\frac{C}{v^2}\arrowvert\beta\arrowvert^2,
\end{eqnarray*}
where $C$ is a positive constant space independent which can vary from line to line. Hence the expected lower bound for the curvature operator.

\end{proof}

\section{A priori curvature estimates and compactness theorems}\label{comp-theo-sec}

We start by establishing an a priori estimate for general subsolutions of weighted elliptic equations with quadratic nonlinearities on a Ricci expander :

\begin{prop}\label{a-priori-reaction}
Let $(M^n,g,\nabla f)$ be an expanding gradient Ricci soliton with bounded scalar curvature and such that the potential function $f$ is an exhausting function. Assume there exists a bounded nonnegative function $u:M\rightarrow \mathbb{R}$ such that
\begin{eqnarray*}
\Delta_fu\geq-u-u^2,\quad\limsup_{+\infty}vu<+\infty.
\end{eqnarray*}

Then,
\begin{eqnarray*}
\sup_{M^n}vu\leq C(\sup_{M^n}u,\limsup_{+\infty}vu).
\end{eqnarray*}

\end{prop}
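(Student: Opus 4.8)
The plan is to exploit the strong drift of the weighted Laplacian to tame the quadratic nonlinearity through a reciprocal substitution. Set $w:=vu$, where $v=f+\mu(g)+n/2$ satisfies $\Delta_fv=v$ and $v>\arrowvert\nabla v\arrowvert^2$ by proposition \ref{pot-fct-est}. First I would compute, using $\Delta_fu\geq-u-u^2$, $\Delta_fv=v$ and $\nabla w=u\nabla v+v\nabla u$, that
\begin{eqnarray*}
\Delta_{f-2\ln v}w\geq-\frac{w^2}{v}-\frac{2w}{v^2}\arrowvert\nabla v\arrowvert^2\geq-\frac{w(w+2)}{v},
\end{eqnarray*}
the last step using $\arrowvert\nabla v\arrowvert^2<v$. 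The reaction term $-w(w+2)/v$ has the \emph{wrong} (increasing) sign, so a direct maximum principle on $w$ yields no information: at an interior maximum of $w$ one only recovers a trivially true inequality.

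The key idea is therefore to pass to $h:=1/w$ (working on $\{u>0\}$, or after replacing $u$ by $\sqrt{u^2+\epsilon^2}$ as in corollary \ref{coro-cruc-tens}). A computation gives
\begin{eqnarray*}
\Delta_{f-2\ln v}h=-\frac{\Delta_{f-2\ln v}w}{w^2}+2w\arrowvert\nabla h\arrowvert^2\leq\frac{1}{v}+\frac{2h}{v}+2w\arrowvert\nabla h\arrowvert^2.
\end{eqnarray*}
The point is that the quadratic term has become the harmless linear term $2h/v$, controllable precisely because $h$ is \emph{small} exactly where the inequality will be tested. I would then introduce the barrier $b(v):=L'^{-1}-Av^{-1}$, with $L':=\limsup_{+\infty}vu+\epsilon$ and $A:=2+2/L'$, and compute $\Delta_{f-2\ln v}b\geq A/v-4A/v^2$.

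Next I would run a minimum principle for $h-b$ on $\{v\geq v_*\}$. At an interior minimum $x_0$ with $(h-b)(x_0)<0$ one has $\nabla h=\nabla b$ and $\Delta_{f-2\ln v}(h-b)\geq0$; since there $h<b\leq L'^{-1}$, $\arrowvert\nabla h\arrowvert^2=\arrowvert\nabla b\arrowvert^2<A^2v^{-3}$, and crucially $w\leq(\sup_Mu)\,v$, so the dangerous factor $w$ in $2w\arrowvert\nabla h\arrowvert^2$ is absorbed. Combining the two displays then forces $v(x_0)\leq v_*:=4A+2(\sup_Mu)A^2$, a constant depending only on $\sup_Mu$ and $\limsup_{+\infty}vu$, a contradiction. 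Hence $h-b$ has no negative interior minimum on $\{v\geq v_*\}$. Because $\liminf_{+\infty}(h-b)\geq L^{-1}-L'^{-1}>0$ and, on the inner slice $\{v=v_*\}$, the bound $w\leq(\sup_Mu)v_*$ gives a lower bound for $h$, I conclude that $h$ is bounded below on $\{v\geq v_*\}$ by a positive constant $C(\sup_Mu,\limsup_{+\infty}vu)$, i.e. $vu\leq C$ there. On the compact region $\{v\leq v_*\}$ one simply estimates $vu\leq v_*\sup_Mu$, and letting $\epsilon\to0$ yields the claim.

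The main obstacle is precisely the increasing sign of the quadratic reaction, which defeats any naive comparison or pointwise extremum argument applied to $w=vu$ itself; the whole proof hinges on the observation that inverting to $h=1/w$ both linearises the reaction and places us at points where $h$ is small, while the a priori bound $w\leq(\sup_Mu)v$ is what keeps the resulting gradient term under control. A secondary technical point is the regularisation needed to divide by $w$ where $u$ may vanish, together with the bookkeeping ensuring that the threshold $v_*$ and all constants depend only on $\sup_Mu$ and $\limsup_{+\infty}vu$.
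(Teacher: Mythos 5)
Your proof is correct and is essentially the paper's own argument: the paper performs the same reciprocal substitution, setting $W:=(vu+1)^{-1}+2v^{-1}$, and runs the same minimum principle with the same absorption $vu\leq(\sup_{M}u)\,v$ of the gradient term, the only differences being that inverting $vu+1$ (rather than $vu$) makes regularization unnecessary, and that the minimum principle is applied on compact slices $\{t_1\leq f\leq t_2\}$ with $t_2\to+\infty$ taken at the end, rather than on the noncompact region $\{v\geq v_*\}$ with a liminf condition at infinity. One caveat: of your two options for handling the zero set of $u$, only working on $\{u>0\}$ (where $h\to+\infty$ near that set, the harmless direction for a minimum principle) is viable as written, since replacing $u$ by $\sqrt{u^2+\epsilon^2}$ forces $vu_\epsilon\geq\epsilon v\to+\infty$, hence $h_\epsilon\to0$ at infinity and your step $\liminf_{+\infty}(h-b)\geq L^{-1}-L'^{-1}>0$ fails --- precisely the issue that the paper's choice of $(vu+1)^{-1}$ together with the compact-slice exhaustion is designed to sidestep.
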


\begin{rk}
The coefficient in front of the linear term matters whereas the one in front of the quadratic one does not. Actually, the coefficient in front of the linear term dictates the decay of this solution at infinity.
\end{rk}

\begin{proof}
First of all, as the potential function is  exhausting, the level sets $\{f=t\}$ of the potential function are smooth compact diffeomorphic hypersurfaces as soon as $t>\sup_{M^n}\R_g$ by lemma \ref{id-EGS}.

As $\Delta_fv=v$,
\begin{eqnarray*}
\Delta_f(vu)&\geq& -\frac{(vu)^2}{v}+2<\nabla(vu),\nabla \ln v>-2(vu)\arrowvert\nabla\ln v\arrowvert^2\\
&\geq&2<\nabla(vu),\nabla \ln v>-\frac{(vu+1)^2}{v}+\frac{1}{v},
\end{eqnarray*}
 by proposition \ref{pot-fct-est} Now, define $w:=(vu+1)^{-1}$. Then,
\begin{eqnarray*}
\Delta_{v-2 \ln v}w&=&-\frac{\Delta_{v-2 \ln v}(vu)}{(vu+1)^2}+2\frac{\arrowvert\nabla(vu)\arrowvert^2}{(vu+1)^3}\leq\frac{1}{v}+2\arrowvert\nabla w\arrowvert^2(uv+1)\\
&\leq&\frac{1}{v}+2\arrowvert\nabla w\arrowvert^2w^{-1}.
\end{eqnarray*}
Finally, consider $W:=w+2v^{-1}$. Then, as
\begin{eqnarray*}
\Delta_{v-2 \ln v}v^{-1}&=&-\frac{\Delta_{v-2\ln v}v}{v^2}+2\arrowvert\nabla v\arrowvert^2v^{-3}\\
&=&-\frac{1}{v}+4\arrowvert\nabla v\arrowvert^2v^{-3},
\end{eqnarray*}
we get,
\begin{eqnarray*}
\Delta_{v-2 \ln v}W&\leq& -v^{-1}+8\arrowvert\nabla v\arrowvert^2v^{-3}+2\arrowvert\nabla w\arrowvert^2w^{-1}\\
&\leq&-v^{-1}+8v^{-2}+2\arrowvert\nabla w\arrowvert^2w^{-1}.
\end{eqnarray*}
Assume that $W$ attains its minimum at an interior point in the slice $\{t_1\leq f\leq t_2\}$ with $t_1$ to be chosen later. Then, at this point, one has by the previous differential inequality :
\begin{eqnarray*}
0\leq -v^{-1}+8v^{-2}+2\arrowvert\nabla2v^{-1}\arrowvert^2w^{-1}.
\end{eqnarray*}
That is,
\begin{eqnarray*}
1-8v^{-1}\leq8(\sup_{M^n}u+v^{-1})v^{-1},
\end{eqnarray*}
which is impossible if $t_1>C(\sup_{M^n}u)$. Therefore, $W$ attains its minimum on the boundary of $\{t_1\leq f\leq t_2\}$ as soon as $t_1$ is larger than $C(\sup_{M^n}u)>0$. By letting $t_2$ go to $+\infty$, one has 
\begin{eqnarray*}
\min_MW\geq\min\left(C(\sup_{M^n}u),\liminf_{+\infty}W\right),
\end{eqnarray*}
which turns out to be exactly the expected estimate.

\end{proof}

It turns out that proposition \ref{a-priori-reaction} can be applied to the full curvature tensor and the scalar curvature when the Ricci curvature is nonnegative as the next two lemmata show.

\begin{coro}\label{a-prio-bd-curv-tensor}
Let $(M^n,g,\nabla f)$ be an expanding gradient Ricci soliton with finite asymptotic curvature ratio, i.e. $\A_g^0(\Rm(g))<+\infty$. Then,
\begin{eqnarray*}
\sup_{M^n}v\arrowvert\Rm(g)\arrowvert\leq C\left(n,\sup_{M^n}\arrowvert\Rm(g)\arrowvert,\A_g^0(\Rm(g))\right).
\end{eqnarray*}
\end{coro}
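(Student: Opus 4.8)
The plan is to apply Proposition \ref{a-priori-reaction} to a constant multiple of $u:=\arrowvert\Rm(g)\arrowvert$, after verifying each of its hypotheses from the single assumption $\A^0_g(\Rm(g))<+\infty$.

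First I would record the consequences of that assumption. Since $\arrowvert\Rm(g)\arrowvert$ is continuous and decays quadratically near infinity, it is bounded on $M^n$; a fortiori $\arrowvert\Ric(g)\arrowvert\leq c(n)\arrowvert\Rm(g)\arrowvert$ and $\R_g$ are bounded, so the bounded-scalar-curvature hypothesis of Proposition \ref{a-priori-reaction} holds, and moreover $\A^0_g(\Ric(g))<+\infty$. By the argument used in the proof of Theorem \ref{asy-con-gen}, this quadratic Ricci decay forces $f$ (hence $v$) to be equivalent to $r_p^2/4$, so $f$ is exhausting and $\limsup_{+\infty}v\arrowvert\Rm(g)\arrowvert=\tfrac14\A^0_g(\Rm(g))<+\infty$.

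Next I would derive the differential inequality. Differentiating twice and using \eqref{equ:5} in the form $\Delta_f\Rm(g)=-\Rm(g)-\Rm(g)\ast\Rm(g)$ gives
$$\Delta_f\arrowvert\Rm(g)\arrowvert^2=2\arrowvert\nabla\Rm(g)\arrowvert^2-2\arrowvert\Rm(g)\arrowvert^2-2\langle\Rm(g)\ast\Rm(g),\Rm(g)\rangle\geq 2\arrowvert\nabla\Rm(g)\arrowvert^2-2\arrowvert\Rm(g)\arrowvert^2-2c(n)\arrowvert\Rm(g)\arrowvert^3.$$
On the open set $\{\Rm(g)\neq 0\}$, where $u$ is smooth, Kato's inequality $\arrowvert\nabla u\arrowvert^2\leq\arrowvert\nabla\Rm(g)\arrowvert^2$ and division by $2u>0$ yield $\Delta_f u\geq -u-c(n)u^2$. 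Setting $\hat u:=c(n)u$ converts this into $\Delta_f\hat u\geq-\hat u-\hat u^2$, which matches the hypothesis of Proposition \ref{a-priori-reaction} exactly; this is where the remark that the quadratic coefficient is irrelevant becomes a precise rescaling.

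Finally I would feed $\hat u$ into Proposition \ref{a-priori-reaction}. The one subtlety is that $\hat u$ is only Lipschitz at the zeros of $\Rm(g)$, while the inequality holds only where $\hat u>0$; I expect this bookkeeping to be the main (minor) obstacle. It is harmless, because the maximum principle in that proof invokes the inequality solely at an interior minimum $x_0$ of $W:=(v\hat u+1)^{-1}+2v^{-1}$ on a slice $\{t_1\leq f\leq t_2\}$: if $\hat u(x_0)>0$ then $\hat u$ is smooth near $x_0$ and the computation runs verbatim, forcing a contradiction once $t_1>C(\sup_{M^n}\hat u)$, while if $\hat u(x_0)=0$ then $W(x_0)=1+2v(x_0)^{-1}>1$, so such a point automatically respects the sought positive lower bound for $W$. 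Hence on each slice $\min W$ is controlled by its boundary values and by $1$; letting $t_2\to+\infty$ (using $\liminf_{+\infty}W=(\limsup_{+\infty}v\hat u+1)^{-1}>0$) gives $\inf_{M^n}W>0$, that is $\sup_{M^n}v\hat u<+\infty$, and therefore $\sup_{M^n}v\arrowvert\Rm(g)\arrowvert\leq C\left(n,\sup_{M^n}\arrowvert\Rm(g)\arrowvert,\A^0_g(\Rm(g))\right)$, as claimed. A secondary point to check carefully is precisely the comparability $v\sim r_p^2/4$, which is what guarantees the $\limsup$ hypothesis is finite.
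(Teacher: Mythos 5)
Your proposal is correct and follows essentially the same route as the paper: both reduce the corollary to Proposition \ref{a-priori-reaction} (or rather to its proof) applied to $\arrowvert\Rm(g)\arrowvert$, after noting that $\A^0_g(\Rm(g))<+\infty$ makes $f$ comparable to $r_p^2/4$ (hence exhausting) and makes $\limsup_{+\infty}v\arrowvert\Rm(g)\arrowvert$ finite, and both derive the differential inequality $\Delta_f u\geq-u-c(n)u^2$ from equation \eqref{equ:5}. The only difference is the device used where the curvature vanishes: the paper regularizes with $u_{\epsilon}:=\sqrt{\arrowvert\Rm(g)\arrowvert^2+\epsilon^2}$, which satisfies the inequality globally, and lets $\epsilon\rightarrow 0$ at the end, whereas you work on $\{\Rm(g)\neq 0\}$ via Kato's inequality and dispose of an interior minimum of $W$ at a zero of $\Rm(g)$ by the observation $W=1+2v^{-1}>1$ there; both treatments are valid and of comparable length.
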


\begin{proof}
First of all, as $\A_g^0(\Rm(g))<+\infty$, it can be shown that $f$ is equivalent to $r_p^2/4$, in particular $f$ is proper.

By equation (\ref{a-priori-reaction}), the curvature operator $\Rm(g)$ of an expanding gradient Ricci soliton satisfies
\begin{eqnarray*}
\Delta_f\Rm(g)=-\Rm(g)+\Rm(g)\ast\Rm(g).
\end{eqnarray*}
Therefore,
\begin{eqnarray*}
\Delta_f\arrowvert\Rm(g)\arrowvert^2\geq2\arrowvert\nabla\Rm(g)\arrowvert^2-2\arrowvert\Rm(g)\arrowvert^2-c(n)\arrowvert\Rm(g)\arrowvert^3.
\end{eqnarray*}
Now, since the curvature may vanish, we have to deal with $u_{\epsilon}:=\sqrt{\arrowvert\Rm(g)\arrowvert^2+\epsilon^2}$ first, where $\epsilon>0$. By routine computations, one has
\begin{eqnarray*}
\Delta_fu_{\epsilon}\geq-u_{\epsilon}-c(n)u_{\epsilon}^2.
\end{eqnarray*}
Using the proof of proposition \ref{a-priori-reaction}, one gets, with the same notations, $W_{\epsilon}:=(vu_{\epsilon}+1)^{-1}$,
\begin{eqnarray}\label{min-W}
\min_{t_1\leq f\leq t_2}W_{\epsilon}=\min\left\{\min_{f=t_1} W_{\epsilon},\min_{f=t_2}W_{\epsilon}\right\},
\end{eqnarray}
for $t_1$ larger than a positive constant depending on $\sup_{M^n}u_{\epsilon}$ which does not depend on $\epsilon$ if $\epsilon$ is less than $1$. Therefore, if $\epsilon$ goes to $0$ in (\ref{min-W}), and by letting $t_2$ go to $+\infty$, we get the expected estimate.

\end{proof}

\begin{coro}\label{scal-max-ppe-infty}
Let $(M^n,g,\nabla f)$ be an expanding gradient Ricci soliton with nonnegative Ricci curvature such that $\limsup_{+\infty}v\R_g<+\infty$. Then,
\begin{eqnarray*}
\sup_{M^n}v\R_g\leq C\left(\sup_{M^n}\R_g,\A_g^0(\R_g)\right).
\end{eqnarray*}
\end{coro}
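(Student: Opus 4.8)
The plan is to apply Proposition \ref{a-priori-reaction} directly to the scalar curvature $u:=\R_g$. The key simplification compared with Corollary \ref{a-prio-bd-curv-tensor} is that $\R_g$ is a genuine smooth function, so the weighted laplacian $\Delta_f\R_g$ is defined classically everywhere and no $\epsilon$-regularization of the form $\sqrt{\arrowvert\Rm(g)\arrowvert^2+\epsilon^2}$ is required. First I would record the structural consequences of $\Ric(g)\geq 0$ supplied by Proposition \ref{pot-fct-est}: the manifold is diffeomorphic to $\mathbb{R}^n$, the potential function $f$ (hence $v$) is a proper exhausting function comparable to $r_p^2/4$ by (\ref{inequ:3}), and the scalar curvature is bounded with $\sup_{M^n}\R_g\leq\R_g(p)$. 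The nonnegativity of $\Ric(g)$ also forces $\R_g\geq 0$, so $u=\R_g$ is the required bounded nonnegative function, and the comparability $v\sim r_p^2/4$ is exactly what identifies the hypothesis $\limsup_{+\infty}v\R_g<+\infty$ with the finiteness of $\A_g^0(\R_g)$ as well as the two quantities appearing in the conclusion.

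Next I would produce the differential inequality needed to invoke the proposition. Starting from the evolution equation (\ref{equ:7}), one has $\Delta_f\R_g=-\R_g-2\arrowvert\Ric(g)\arrowvert^2$. Since $\Ric(g)\geq 0$, its eigenvalues $\lambda_1,\dots,\lambda_n$ are nonnegative, so $\arrowvert\Ric(g)\arrowvert^2=\sum_i\lambda_i^2\leq\left(\sum_i\lambda_i\right)^2=\R_g^2$, the cross terms being nonnegative. This yields
\begin{eqnarray*}
\Delta_f\R_g\geq-\R_g-2\R_g^2,
\end{eqnarray*}
which is of the form $\Delta_fu\geq-u-cu^2$ with $c=2$.

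Finally I would apply Proposition \ref{a-priori-reaction} to $u=\R_g$. As pointed out in the remark following that proposition, only the coefficient of the linear term is relevant, so the factor $2$ in front of the quadratic term is harmless: the proof of Proposition \ref{a-priori-reaction} goes through verbatim after an adjustment of constants, exactly as in the proof of Corollary \ref{a-prio-bd-curv-tensor} where the inequality $\Delta_fu_{\epsilon}\geq-u_{\epsilon}-c(n)u_{\epsilon}^2$ was already handled. This gives $\sup_{M^n}v\R_g\leq C\!\left(\sup_{M^n}\R_g,\limsup_{+\infty}v\R_g\right)$, and rewriting $\limsup_{+\infty}v\R_g$ through $\A_g^0(\R_g)$ via the comparability of $v$ and $r_p^2/4$ produces the stated bound. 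I do not expect a genuine obstacle; the only points needing care are the quadratic-coefficient bookkeeping, which is precisely the content of the remark, and the translation between the $v$-normalization used in Proposition \ref{a-priori-reaction} and the $\A_g^0$-normalization in the statement.
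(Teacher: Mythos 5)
Your proposal is correct and follows essentially the same route as the paper: invoke Proposition \ref{pot-fct-est} for the exhaustion of $f$ and the boundedness of $\R_g$, use equation (\ref{equ:7}) together with $\arrowvert\Ric(g)\arrowvert^2\leq\R_g^2$ (valid since $\Ric(g)\geq 0$) to get $\Delta_f\R_g\geq-\R_g-2\R_g^2$, and apply Proposition \ref{a-priori-reaction} to $u=\R_g$. Your additional remarks — that no $\epsilon$-regularization is needed since $\R_g$ is smooth, that the coefficient $2$ on the quadratic term is harmless per the remark following Proposition \ref{a-priori-reaction}, and that $v\sim r_p^2/4$ identifies $\limsup_{+\infty}v\R_g$ with $\A_g^0(\R_g)$ — are exactly the points the paper leaves implicit.
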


\begin{proof}
By proposition \ref{pot-fct-est}, $f$  is an exhausting function and the scalar curvature is bounded. Moreover, the scalar curvature satisfies the following differential inequality since the Ricci curvature is nonnegative :
\begin{eqnarray*}
\Delta_f\R_g=-\R_g-2\arrowvert\Ric(g)\arrowvert^2\geq -\R_g-2\R_g^2.
\end{eqnarray*}
Hence the result by applying proposition \ref{a-priori-reaction} to the scalar curvature $\R_g$.
\end{proof}

As we are dealing with smooth convergence, we need to control globally the rescaled covariant derivatives of the curvature tensor in terms of the asymptotic curvature ratios $(\A_g^k(\Rm(g)))_{k\geq 0}$ and a bound on the curvature tensor. We state and prove the following proposition dealing only with the full curvature tensor. 

\begin{prop}\label{curv-op-max-ppe-infty}
Let $(M^n,g,\nabla f)$ be an expanding gradient Ricci soliton such that $$\A_g^i(\Rm(g))<+\infty,$$ for $i\in\{0,...,k\}$. Then,
\begin{eqnarray*}
\sup_{M^n}\left(v^{1+k/2}\arrowvert\nabla^k\Rm(g)\arrowvert\right)\leq C\left(n,k,\min_{M^n}v,\sup_{M^n}\arrowvert\Rm(g)\arrowvert,\A_g^0(\Rm(g)),...,\A_g^k(\Rm(g))\right).
\end{eqnarray*}

\end{prop}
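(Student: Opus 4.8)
The plan is to argue by induction on $k$, the base case $k=0$ being exactly Corollary \ref{a-prio-bd-curv-tensor}, which already gives $\sup_{M^n} v\arrowvert\Rm(g)\arrowvert \leq C(n,\sup_{M^n}\arrowvert\Rm(g)\arrowvert,\A_g^0(\Rm(g)))$. Since $\A_g^0(\Rm(g))<+\infty$ forces $f$ to be proper and equivalent to $r_p^2/4$, the weight $v=f+\mu(g)+n/2$ is comparable to $r_p^2/4$ as well, so each hypothesis $\A_g^i(\Rm(g))<+\infty$ is equivalent to the finiteness of $T_i:=\limsup_{+\infty} v^{1+i/2}\arrowvert\nabla^i\Rm(g)\arrowvert$. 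The aim is therefore to upgrade these asymptotic (limsup) bounds into the global supremum bounds $\sup_{M^n} v^{1+i/2}\arrowvert\nabla^i\Rm(g)\arrowvert \leq C_i$, one order at a time.

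First I would differentiate the evolution equation (\ref{equ:5}) $k$ times and commute derivatives past $\Delta_f$ using the commutation identities (\ref{comm-1}) and (\ref{comm-2}) of Proposition \ref{id-op}, just as in the proof of Corollary \ref{coro-cruc-tens}. The crucial bookkeeping, which is special to the choice $T=\Rm(g)$, is that every term in which the top-order factor $\nabla^k\Rm(g)$ (or $\nabla^k\Ric(g)$, itself a contraction of $\nabla^k\Rm(g)$) is contracted against an undifferentiated curvature factor can be absorbed into a single term of the form $\Rm(g)\ast\nabla^k\Rm(g)$. One is then left with
\begin{eqnarray*}
\Delta_f(\nabla^k\Rm(g))=-\left(1+\frac{k}{2}\right)\nabla^k\Rm(g)+\Rm(g)\ast\nabla^k\Rm(g)+Q_k,
\end{eqnarray*}
where the source $Q_k=\sum_{i=1}^{k-1}\nabla^i\Rm(g)\ast\nabla^{k-i}\Rm(g)$ (together with the analogous same-order terms built from derivatives of $\Ric(g)$) involves only the derivatives $\nabla^1\Rm(g),\dots,\nabla^{k-1}\Rm(g)$. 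By the induction hypothesis each factor satisfies $\arrowvert\nabla^i\Rm(g)\arrowvert\leq C_i v^{-1-i/2}$, so a direct count of weights gives $\arrowvert Q_k\arrowvert=\textit{O}(v^{-2-k/2})$.

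With this equation in hand the remaining step is an application of Lemma \ref{lemm-cruc}. Because $\nabla^k\Rm(g)$ may vanish, I would work with $u_\epsilon:=\sqrt{\arrowvert\nabla^k\Rm(g)\arrowvert^2+\epsilon^2}$, which satisfies the subsolution inequality (\ref{subsol-ell-wei}) with $\lambda=1+k/2$, with the $c_1v^{-\alpha}$ term coming from $\arrowvert\Rm(g)\arrowvert\leq c(n)(\sup_{M^n}v\arrowvert\Rm(g)\arrowvert)v^{-1}$ of Corollary \ref{a-prio-bd-curv-tensor} (so $\alpha=1$), and with $Q=\arrowvert Q_k\arrowvert=\textit{O}(v^{-\beta})$ for $\beta=2+k/2$. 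Since $\lambda<\beta$, part $(2)$ of Lemma \ref{lemm-cruc} produces a barrier whose maximum over a slice $\{t_0\leq f\leq t\}$ is attained on the boundary. Letting $\epsilon\to0$ and then $t\to+\infty$, the outer boundary contribution is controlled by $T_k<+\infty$, while the inner boundary $\{f=t_0\}$ is controlled by the local Shi-type estimates of Lemma \ref{loc-est-tensor-egs} (or Lemma \ref{elliptic}), which bound $\sup_{\{f\leq t_0\}}\arrowvert\nabla^k\Rm(g)\arrowvert$ in terms of $n$, $k$, $\min_{M^n}v$ and $\sup_{M^n}\arrowvert\Rm(g)\arrowvert$. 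This yields the global bound $\sup_{M^n}v^{1+k/2}\arrowvert\nabla^k\Rm(g)\arrowvert\leq C$ with the stated dependence, closing the induction.

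The main obstacle is precisely the absorption described in the second paragraph. A naive appeal to Corollary \ref{coro-cruc-tens}$(2)$ with $T=\Rm(g)$ is circular, since for that tensor the source would contain the term $\nabla^k\Rm(g)\ast\Rm(g)$ and the resulting estimate would depend on $\sup_{M^n}v^{1+k/2}\arrowvert\nabla^k\Rm(g)\arrowvert$, the very quantity one wants to bound. Recognising that this top-order term is linear in $\nabla^k\Rm(g)$ and may be moved to the $\Rm(g)\ast\nabla^k\Rm(g)$ side of the equation — so that $Q_k$ genuinely drops to order $k-1$ and decays like $\textit{O}(v^{-2-k/2})$ — is exactly what makes the induction self-contained and supplies the strict inequality $\lambda<\beta$ required by Lemma \ref{lemm-cruc}$(2)$.
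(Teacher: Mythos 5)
Your proposal is correct and takes essentially the same route as the paper: induction on $k$ starting from Corollary \ref{a-prio-bd-curv-tensor}, the commutation identities of Proposition \ref{id-op} isolating a source $Q_k$ involving only $\nabla^i\Rm(g)$ for $1\leq i\leq k-1$, hence $\textit{O}(v^{-2-k/2})$ by the induction hypothesis, followed by the weighted maximum principle applied to the regularization $u_\epsilon$ with the barrier $v^{-1}$ and the inner region controlled by the local Shi estimates. The only difference is presentational: you invoke Lemma \ref{lemm-cruc}(2) as a black box (with $\lambda=1+k/2<\beta=2+k/2$, $\alpha=1$), whereas the paper re-runs that lemma's barrier construction by hand (multiplying by $v^{k/2+1}$, then by $e^{-C_0v^{-1}}$, then subtracting $C_1v^{-1}$), and your observation that a direct appeal to Corollary \ref{coro-cruc-tens} would be circular is precisely the caveat with which the paper opens its proof.
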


\begin{proof}[Proof of proposition \ref{curv-op-max-ppe-infty}]

We cannot apply directly corollary \ref{coro-cruc-tens}, since it assumes a priori bounds on the full rescaled covariant derivatives of the curvature.
 
Nonetheless, we prove these estimates in a similar way by induction on $k$.

We begin by deriving a  nice differential inequality satisfied by $\nabla^k\Rm(g)$ :
\begin{eqnarray*}
\Delta_f\arrowvert\nabla^k\Rm(g)\arrowvert^2&=&2\arrowvert\nabla^{k+1}\Rm(g)\arrowvert^2+2<[\Delta,\nabla^k]\Rm(g),\nabla^k\Rm(g)>\\
&&+2<[\nabla_{\nabla f},\nabla^k]\Rm(g),\nabla^k\Rm(g)>\\
&&+2<\nabla^k(-\Rm(g)+\Rm(g)\ast\Rm(g)),\nabla^k\Rm(g)>.
\end{eqnarray*}

According to (\ref{comm-1}) and (\ref{comm-2}) , we get for $k\geq 2$, (the case $k=1$ can be proved in a more straightforward way)
\begin{eqnarray}\label{Rm-evo-equ}
\Delta_f\arrowvert\nabla^k\Rm(g)\arrowvert^2&\geq&2\arrowvert\nabla^{k+1}\Rm(g)\arrowvert^2-2\left(1+\frac{k}{2}+c(n,k)\arrowvert\Rm(g)\arrowvert\right)\arrowvert\nabla^k\Rm(g)\arrowvert^2\\
&&-c(n,k)\sum_{i=1}^{k-1}\arrowvert\nabla^i\Rm(g)\arrowvert\arrowvert\nabla^{k-i}\Rm(g)\arrowvert\arrowvert\nabla^k\Rm(g)\arrowvert
\end{eqnarray}
By applying the induction assumption, we get
\begin{eqnarray*}
&&\Delta_f\arrowvert\nabla^k\Rm(g)\arrowvert^2\geq2\arrowvert\nabla^{k+1}\Rm(g)\arrowvert^2-2\left(1+\frac{k}{2}+c(n,k)\arrowvert\Rm(g)\arrowvert\right)\arrowvert\nabla^k\Rm(g)\arrowvert^2\\
&&-C\left(n,k,\min_{M^n}v,\sup_{M^n}\arrowvert\Rm(g)\arrowvert,\A_g^0(\Rm(g)),...,\A_g^{k-1}(\Rm(g))\right)v^{-(k/2+2)}\arrowvert\nabla^k\Rm(g)\arrowvert.
\end{eqnarray*}
By considering $u^k_{\epsilon}:=\sqrt{\arrowvert\nabla^k\Rm(g)\arrowvert^2+\epsilon^2}$, one has
\begin{eqnarray*}
\Delta_fu^k_{\epsilon}&\geq&-\left(1+\frac{k}{2}+c(n,k)\arrowvert\Rm(g)\arrowvert\right)u_{\epsilon}^k\\
&&-C\left(n,k,\min_{M^n}v,\sup_{M^n}\arrowvert\Rm(g)\arrowvert,\A_g^0(\Rm(g)),...,\A_g^{k-1}(\Rm(g))\right)v^{-(k/2+2)}.
\end{eqnarray*}
Since 
\begin{eqnarray*}
\Delta_f(v^{k/2+1})=\left(1+\frac{k}{2}\right)v^{k/2+1}+\frac{k}{2}\left(1+\frac{k}{2}\right)\arrowvert\nabla\ln v\arrowvert^2v^{k/2+1},
\end{eqnarray*}
we get, by corollary \ref{a-prio-bd-curv-tensor},
\begin{eqnarray*}
\Delta_f\left(u_{\epsilon}^kv^{k/2+1}\right)&\geq& 2\left(1+\frac{k}{2}\right)<\nabla(u_{\epsilon}^kv^{k/2+1}),\nabla\ln v>\\
&&-C(n,k,\sup_{M^n}\arrowvert\Rm(g)\arrowvert,\A_g^0(\Rm(g)))v^{-1}\left(u_{\epsilon}^kv^{k/2+1}\right)\\
&&-C\left(n,k,\min_{M^n}v,\sup_{M^n}\arrowvert\Rm(g)\arrowvert,\A_g^0(\Rm(g)),...,\A_g^{k-1}(\Rm(g))\right)v^{-1}.
\end{eqnarray*}
Now,
\begin{eqnarray*}
&&\Delta_{v-(2+k)\ln v}e^{-C_0v^{-1}}=\frac{C_0}{v^2}\left(v-(2+k)\frac{\arrowvert \nabla v\arrowvert^2}{v}\right)+C_0^2\arrowvert\nabla v^{-1}\arrowvert^2,\\
&&\Delta_{v-(2+k)\ln v}\left(u_{\epsilon}^kv^{k/2+1}e^{-C_0v^{-1}}\right)\geq-2C_0<\nabla\left(u_{\epsilon}^kv^{k/2+1}e^{-C_0v^{-1}}\right),\nabla v^{-1}>\\
&&+\left(\frac{C_0}{2}-C(n,k,\sup_{M^n}\arrowvert\Rm(g)\arrowvert,\A_g^0(\Rm(g)))-C_0^2v^{-2}\right)\cdot v^{-1}\cdot\left(u_{\epsilon}^kv^{k/2+1}e^{-C_0v^{-1}}\right)\\
&&-C\left(n,k,\min_{M^n}v,\sup_{M^n}\arrowvert\Rm(g)\arrowvert,\A_g^0(\Rm(g)),...,\A_g^{k-1}(\Rm(g))\right)e^{-C_0v^{-1}}v^{-1}\\
&\geq&-2C_0<\nabla\left(u_{\epsilon}^kv^{k/2+1}e^{-C_0v^{-1}}\right),\nabla v^{-1}>\\
&&-C\left(n,k,\min_{M^n}v,\sup_{M^n}\arrowvert\Rm(g)\arrowvert,\A_g^0(\Rm(g)),...,\A_g^{k-1}(\Rm(g))\right)e^{-C_0v^{-1}}v^{-1},
\end{eqnarray*}
for 
\begin{eqnarray*}
&&C_0=C\left(n,k,\sup_{M^n}\arrowvert\Rm(g)\arrowvert,\A_g^0(\Rm(g))\right),\\ 
&&v\geq C:=C\left(n,k,\sup_{M^n}\arrowvert\Rm(g)\arrowvert,\A_g^0(\Rm(g))\right).
\end{eqnarray*}
Finally, for any constant $C_1$ larger than $C$, one has
\begin{eqnarray*}
\Delta_{v-(2+k)\ln v+2C_0v^{-1}}\left(u_{\epsilon}^kv^{k/2+1}e^{-C_0v^{-1}}-C_1v^{-1}\right)\geq 0,
\end{eqnarray*}
for $v\geq C\left(n,k,\sup_{M^n}\arrowvert\Rm(g)\arrowvert,\A_g^0(\Rm(g))\right)$. Therefore, by the maximum principle and the fact that these estimates does not depend on $\epsilon\leq1$, one gets the expected estimates since one can assume that $$u_{\epsilon}^kv^{k/2+1}e^{-C_0v^{-1}}-C_1v^{-1}\leq 0$$ on $v\leq C(n,k,\sup_{M^n}\arrowvert\Rm(g)\arrowvert,\A_g^0(\Rm(g)))$.

\end{proof}

We are now in a position to prove theorem \ref{Compactness-I}.

\begin{proof}[Proof of theorem \ref{Compactness-I}]
\begin{itemize}

\item  Firstly, we remark that if $(M^n,g,\nabla f,p)\in\cat{M}^0_{Exp}(n,\lambda_0,(\Lambda_k)_{k\geq 0})$ then $(M^n,g,\nabla f)$ is asymptotically conical to $(C(X),g_{C(X)},r\partial_r/2)$ by theorem \ref{asy-con-gen}. Moreover, as $\Ric(g)\geq 0$, $X$ is diffeomorphic to $\mathbb{S}^{n-1}$. We claim that $$\AVR(g)\geq C(n,\A^0_g(\Rm(g)))\geq C(n,\Lambda_0)>0.$$

Indeed, $(X,g_X)$ is a smooth Riemannian manifold such that 
\begin{eqnarray*}
&&\Ric(g_X)\geq(n-2)g_X,\\
&&\sup_{X}\arrowvert\Rm(g_X)\arrowvert\leq C(n,\A_g^0(\Rm(g)))\leq C(n,\Lambda_0).
\end{eqnarray*}

On the one hand, if $n$ is odd, then the Gauss-Bonnet applied to $(X,g_X)$ gives
\begin{eqnarray*}
2=\chi(X)=\frac{2}{\vol(\mathbb{S}^{n-1})}\int_{X}\textbf{K},
\end{eqnarray*}
where 
\begin{eqnarray*}
\textbf{K}=
\frac{1}{(n-1)!}\sum_{i_1<...<i_{n-1}}\epsilon_{i_1,...,i_{n-1}}\Rm(g_X)_{i_1,i_2}\wedge...\wedge\Rm(g_X)_{i_{n-2},i_{n-1}},
\end{eqnarray*}
where $\epsilon_{i_1,...i_{n-1}}$ is the signature of the permutation $(i_1,...,i_{n-1})$. Therefore,
\begin{eqnarray*}
0<C(n)\leq \vol_{g_X}(X)(C(n)+\A^0_g(\Rm(g))^{\frac{n-1}{2}}\leq C(n,\Lambda_0)\AVR(g),
\end{eqnarray*}
which proves the claim if $n$ is odd.

On the other hand, if $n$ is even, we apply the $\pi_2$-theorem of \cite{Pet-Tus-Pi_2} asserting in our setting that there exists a positive constant $V_0=V_0(n,\Lambda_0)$ such that the volume of any simply connected compact Riemannian manifold $(X,g_X)$ with finite second homotopy group and such that $\Ric(g_X)\geq c(n)g_{X}>0$, $\Rm(g_X)\leq C(n,\Lambda_0)$ is bounded from below by $V_0$.

As a consequence, any link of the asymptotic cones of such Ricci expanders belongs to $ \cat{M}(n-1,\pi,C(n,\Lambda_0),(\widetilde{\Lambda_k})_{ k\geq0})$ where $\widetilde{\Lambda_k}$ depends on a finite number of $\Lambda_i$. Indeed, the diameter estimate follows by the Myers theorem and the curvature estimates follow by Gauss equations. Hence, these metrics lie in a compact set for the $C^{\infty}$ topology according to theorem \ref{comp-ham-static}.\\

\item Thanks to proposition \ref{pot-fct-est}, we notice that
\begin{eqnarray}
\arrowvert\min_{M^n}f\arrowvert+\arrowvert\mu(g)\arrowvert\leq C(n,\lambda_0,\Lambda_0).\label{bd-pot-fct-ent-cpt}
\end{eqnarray}

Therefore, if $(M_i^n,g_i,\nabla f_i,p_i)_i\in \cat{M}^0_{Exp}(n,\lambda_0,(\Lambda_k)_{k\geq 0})$, then $(M_i^n,g_i,p_i)_i$ belongs to 
\begin{eqnarray*}
\cat{M}_{Pt}(n,v,(C(n,k,\lambda_0))_{k\geq0})&:=&\{(M^n,g,p) \quad \mbox{complete}\quad | \vol_gB_g(p,1)\geq v\quad;\\
&& \arrowvert\nabla^k\Rm(g)\arrowvert\leq C(n,k,\lambda_0),\forall k\in\mathbb{N}\},
 \end{eqnarray*}
 where $v=v(n,\Lambda_0)$, which is compact for the pointed $C^{\infty}$ topology. Indeed, the bounds on the covariant derivatives of the curvature tensor come from lemma \ref{elliptic} and the volume estimate is due to the Bishop-Gromov theorem together with the estimate $\AVR(g)\geq C(n,\Lambda_0)$. In particular, $(M_i,g_i,p_i)_i$ (sub)converges to a smooth Riemannian manifold $(M_{\infty},g_{\infty},p_{\infty})\in\cat{M}_{Pt}(n,v,(C(n,k,\lambda_0))_{k\geq0})$ with non negative Ricci curvature. Moreover, by the Ricci soliton equation together with the bounds (\ref{bd-pot-fct-ent-cpt}), $(f_i)_i$ subconverges to a smooth function $f_{\infty}$ satisfying the Ricci soliton as well : $f_{\infty}$ is a smooth strictly convex function with $\crit(f_{\infty})=\{p_{\infty}\}$. We also check that $f_{\infty}$ is normalized. Indeed, for a positive radius $R>0$ and some index $i$, we have
 \begin{eqnarray*}
 \left\arrowvert(4\pi)^{n/2}-\int_{B(p_i,R)}e^{-f_i}d\mu(g_i)\right\arrowvert&\leq&\int_{r_{p_i}\geq R}e^{-f_i}d\mu(g_i)\\
 &\leq&e^{-\min_{M_i^n}f_i}\int_{r_{p_i}\geq R}e^{-r_{p_i}^2/4}d\mu(g_i)\\
 &\leq&C(n,\lambda_0,\Lambda_0)\int_R^{+\infty}e^{-r^2/4}r^{n-1}dr.
 \end{eqnarray*}
 If $R$ is fixed and if $i$ tends to $+\infty$, 
 \begin{eqnarray*}
\left\arrowvert(4\pi)^{n/2}-\int_{B(p_{\infty},R)}e^{-f_{\infty}}d\mu(g_{\infty})\right\arrowvert\leq C(n,\lambda_0,\Lambda_0)\int_R^{+\infty}e^{-r^2/4}r^{n-1}dr, 
\end{eqnarray*}
which implies in particular that $f_{\infty}$ is normalized.\\

\item Now, we justify the inversion of limits. Let $(M_i^n,g_i,\nabla f_i,p_i)_i\in \cat{M}^0_{Exp}(n,\lambda_0,(\Lambda_k)_{k\geq 0})$ be converging to a normalized expanding gradient Ricci soliton $(M_{\infty},g_{\infty},\nabla f_{\infty},p_{\infty})$ with nonnegative Ricci curvature. Let $(C(X_i),dr^2+r^2g_{X_i})_i$ be the sequence of asymptotic cones corresponding to $(M_i,g_i,p_i)_i$. As noticed previously, the sequence of compact Riemannian manifolds $(X_i,g_{X_i})_i$ has a subsequence converging to a smooth Riemannian manifold $(X_{\infty},g_{X_{\infty}})$. On the other hand, according to proposition \ref{curv-op-max-ppe-infty} together with the crucial estimate (\ref{est-rate-con-exp}) given in the proof of theorem \ref{asy-con-gen}, one has, 
\begin{eqnarray}
\partial_t\nabla^{g_{X_i},k}\bar{g_i}_t&=&\textit{O}(t^{-3}),\quad\forall k\geq 0,
\end{eqnarray}
where $\textit{O}$ depends on $k$ and is uniform in the indices $i$. In particular,
\begin{eqnarray*}
\nabla^{g_{X_i},k}((\bar{g_i})_t-g_{X_i})=\textit{O}(t^{-2}),
\end{eqnarray*}
which proves that the asymptotic cone of $(M_{\infty},g_{\infty},p_{\infty})$ is isometric to $(C(X_{\infty}),dr^2+r^2g_{X_{\infty}})$.

The last step is to make sure that the invariants $(\A^k_{g_{\infty}}(\Rm(g_{\infty}))_{k\geq 0}$ have not improved. Indeed,
\begin{eqnarray*}
\A^k_{g_{\infty}}(\Rm(g_{\infty}))=\lim_{i\rightarrow+\infty}\limsup_{x\rightarrow +\infty}d_{g_i}^{2+k}(p_i,x)\arrowvert\nabla^{k}\Rm(g_i)\arrowvert\leq \Lambda_k.
\end{eqnarray*}

\end{itemize}
\end{proof}

\begin{rk}
What if one only assumes bounds on the asymptotic covariant derivatives of the Ricci curvature $(\A_g^k(\Ric(g)))_{k\geq 0}$ ? This question is motivated by theorem \ref{asy-con-gen} where it is proved that the full curvature tensor is actually asymptotically controlled by the Ricci curvature. 
\end{rk}

We state and prove another compactness theorem for Ricci expander with nonnegative curvature operator : the bound on the curvature is replaced by a lower bound on the asymptotic volume ratio.

\begin{theo}\label{Compactness-II}
The class
\begin{eqnarray*}
\cat{M}^{\vol}_{Exp}(n,(\Lambda_k)_{k\geq 0},V_0)&:=&\{\mbox{$(M^n,g,\nabla f,p)$ normalized expanding gradient Ricci soliton. s.t.} 
\\&&\Rm(g)\geq 0\quad;\quad\crit(f)=\{p\}\quad;\\
&&\quad\AVR(g)\geq V_0\quad ;\quad \A_g^k(\Rm(g))\leq\Lambda_k,\quad\forall k\geq 0 \}
\end{eqnarray*}
is compact in the pointed $C^{\infty}$ topology. \\

Moreover, let a sequence $(M_i,g_i,\nabla f_i,p_i)_i$ be in $\cat{M}^{\vol}_{Exp}(n,(\Lambda_k)_{k\geq 0},V_0)$. Then there exists a subsequence converging in the pointed $C^{\infty}$ topology to an expanding gradient Ricci soliton $(M_{\infty},g_{\infty},\nabla f_{\infty},p_{\infty})$ in $\cat{M}^{\vol}_{Exp}(n,(\Lambda_k)_{k\geq 0},V_0)$ whose asymptotic cone\\ $(C(X_{\infty}),g_{C(X_{\infty})},o_{\infty})$ is the limit in the Gromov-Hausdorff topology of the sequence of the asymptotic cones $(C(X_i),g_{C(X_i)},o_i)_i$ with $C^{\infty}$ convergence outside the apex. 
\end{theo}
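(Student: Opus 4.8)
The plan is to deduce the statement from theorem \ref{Compactness-I} by converting the two hypotheses that differ from that theorem, namely $\Rm(g)\ge 0$ and $\AVR(g)\ge V_0$, into the global curvature bound $\sup_M\arrowvert\Rm(g)\arrowvert\le\lambda_0$ assumed there. First I would record the structural consequences. Since $\Rm(g)\ge 0$ forces $\Ric(g)\ge 0$, proposition \ref{pot-fct-est} applies: $M$ is diffeomorphic to $\mathbb{R}^n$, $v=f+\mu(g)+n/2$ is a proper strictly convex exhaustion with $\crit(v)=\{p\}$, and $\sup_M\R_g=\R_g(p)$ because $\R_g$ is nonincreasing along the Morse flow. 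Nonnegativity of the curvature operator gives the pointwise comparison $\arrowvert\Rm(g)\arrowvert\le c(n)\R_g$, so a uniform bound on $\sup_M\arrowvert\Rm(g)\arrowvert$ is \emph{equivalent} to a uniform bound on $\R_g(p)$. Moreover $\A^0_g(\Rm(g))\le\Lambda_0<+\infty$ together with theorem \ref{asy-con-gen} already yields that $(M,g,\nabla f)$ is asymptotically conical with a smooth link $(X,g_X)$.

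The heart of the proof, and the step I expect to be the main obstacle, is the uniform estimate $\R_g(p)\le C(n,V_0,\Lambda_0)$; note that the normalization of $f$ alone does not control it, since the entropy $\mu(g)\ge\R_g(p)$ can a priori grow together with $\R_g(p)$. I would argue by contradiction and blow-up. Suppose $(M_i,g_i,\nabla f_i,p_i)$ lie in the class with $\lambda_i:=\R_{g_i}(p_i)\to+\infty$, rescale $\hat g_i:=\lambda_i g_i$ and normalize $f_i$ by $f_i(p_i)$. Since $\Ric$ and $\Hess f$ are invariant under constant rescaling while the soliton constant $1/2$ scales by $\lambda_i^{-1}$, the soliton equation becomes $\nabla^2 f_i=\Ric(\hat g_i)+\tfrac{1}{2\lambda_i}\hat g_i$; also $\R_{\hat g_i}(p_i)=1$, $\sup\arrowvert\Rm(\hat g_i)\arrowvert\le c(n)$ by the comparison above, and $\AVR(\hat g_i)=\AVR(g_i)\ge V_0$ by scale invariance, so Bishop--Gromov gives $\vol\hat B(p_i,1)\ge V_0\,n^{-1}\cdots$, a definite lower bound. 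Shi's estimates (lemma \ref{elliptic}) then furnish local bounds on all covariant derivatives, so a subsequence converges in the pointed $C^\infty$ topology to a complete limit $(M_\infty,\hat g_\infty,p_\infty)$; letting $\lambda_i\to+\infty$ kill the last term, this limit is a \emph{nonflat} ($\R_{\hat g_\infty}(p_\infty)=1$) steady gradient Ricci soliton with $\Rm(\hat g_\infty)\ge 0$ and, by Bishop--Gromov monotonicity applied at each fixed radius, $\AVR(\hat g_\infty)\ge V_0>0$. This contradicts the fact that a nonflat steady gradient Ricci soliton with nonnegative Ricci curvature has strictly sub-Euclidean volume growth, hence vanishing asymptotic volume ratio; securing a clean reference or self-contained proof of this input is the delicate point of the whole argument. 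We conclude $\sup_i\R_{g_i}(p_i)<+\infty$, that is $\sup_M\arrowvert\Rm(g)\arrowvert\le\lambda_0(n,V_0,\Lambda_0)$ uniformly over the class.

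With this bound available I would run the proof of theorem \ref{Compactness-I} essentially verbatim, the hypotheses now matching. The bound $\sup_M\arrowvert\Rm(g)\arrowvert\le\lambda_0$ makes proposition \ref{pot-fct-est} yield uniform control of $\min_M f$, $\mu(g)$ and $\R_g(p)$; corollary \ref{a-prio-bd-curv-tensor} and proposition \ref{curv-op-max-ppe-infty} give the global rescaled bounds $\sup_M v^{1+k/2}\arrowvert\nabla^k\Rm(g)\arrowvert\le C(n,k,\lambda_0,\Lambda_0,\dots,\Lambda_k)$, and lemma \ref{elliptic} gives covariant derivative bounds on each ball $B(p,R)$. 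The hypothesis $\AVR(g)\ge V_0$ supplies non-collapsing directly, $\vol B(p,1)\ge c(n)V_0$, so Hamilton's pointed compactness applies and a subsequence converges in the pointed $C^\infty$ topology to a soliton $(M_\infty,g_\infty,\nabla f_\infty,p_\infty)$; passing the identity $\int_{B(p_i,R)}e^{-f_i}\,d\mu_{g_i}\to(4\pi)^{n/2}$ to the limit, with the Gaussian tail controlled by the quadratic lower bound on $f$, shows $f_\infty$ is normalized. The limit lies in the class: $\Rm(g_\infty)\ge 0$ and the strict convexity $\nabla^2 f_\infty\ge g_\infty/2$ (whence $\crit(f_\infty)=\{p_\infty\}$) are preserved under $C^\infty$ convergence, $\AVR(g_\infty)\ge V_0$ by the same monotonicity argument, and $\A^k_{g_\infty}(\Rm(g_\infty))=\lim_i\limsup_x r_{p_i}(x)^{2+k}\arrowvert\nabla^k\Rm(g_i)\arrowvert\le\Lambda_k$.

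Finally, the inversion of limits identifying the asymptotic cone is identical to theorem \ref{Compactness-I}: by proposition \ref{curv-op-max-ppe-infty} and the rate formula (\ref{est-rate-con-exp}) one has $\partial_t\nabla^{g_{X_i},k}\bar g_{i,t}=\textit{O}(t^{-3})$ uniformly in $i$, hence $\nabla^{g_{X_i},k}(\bar g_{i,t}-g_{X_i})=\textit{O}(t^{-2})$ uniformly, which is exactly what lets the two limits commute. Here placing the links in a fixed compact set is even easier than before: $\Ric(g_X)\ge(n-2)g_X$ gives $\diam\le\pi$ by Myers, the cone condition $\Rm(g)\ge 0$ gives $\Rm(g_X)\ge\Id$, and $\AVR(g)\ge V_0$ gives $\vol(X,g_X)=n\,\AVR(g)\ge nV_0$ directly, so no appeal to the Petrunin--Tuschmann theorem is needed; the rescaled derivative bounds descend to the link by the Gauss equations, placing $(X_i,g_{X_i})$ in a compact set of theorem \ref{comp-ham-static}. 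Thus $(C(X_i),g_{C(X_i)},o_i)$ subconverges to the asymptotic cone of $(M_\infty,g_\infty)$ with $C^\infty$ convergence away from the apex, completing the proof.
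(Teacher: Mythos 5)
Your proposal is correct and follows essentially the same route as the paper: reduce to theorem \ref{Compactness-I} by establishing the uniform bound $\sup_{M^n}\arrowvert\Rm(g)\arrowvert\leq C(n,V_0)$, proved by contradiction via a blow-up at the critical point whose limit is a nonflat steady gradient Ricci soliton with nonnegative curvature operator and $\AVR\geq V_0>0$. Two minor differences are worth recording. First, the paper extracts the blow-up limit parabolically: it rescales the associated Ricci flows $\tilde{g_i}(\tau)=Q_ig_i(Q_i^{-1}\tau)$ with $Q_i=\R_{g_i}(p_i)+n/2$ and invokes Hamilton's compactness theorem for flows (uniform curvature bounds plus the injectivity radius bound coming from $\AVR\geq V_0$), whereas you work elliptically with the static rescaled metrics, Bishop--Gromov non-collapsing and Shi-type estimates (lemma \ref{elliptic}); both extractions are legitimate and the convergence of the (normalized) potentials to a steady soliton potential is handled identically. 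Second, the contradiction input should be quoted for \emph{nonnegative curvature operator} -- this is Hamilton's argument [Chap. 9, Sec. 3, \cite{Ben}], proved by induction on the dimension, and it is exactly what your limit satisfies since you note $\Rm(\hat g_\infty)\geq 0$ -- rather than for nonnegative Ricci curvature only, which is a strictly stronger assertion and not the classical fact available here; since the curvature operator version suffices for your argument, this is a misstatement rather than a gap, and it resolves the point you flagged as delicate. Finally, your observation that the hypothesis $\AVR(g)\geq V_0$ gives $\vol(X,g_X)\geq nV_0$ directly, making the Petrunin--Tuschmann non-collapsing theorem unnecessary, matches the paper's own remark that this proof is more self-contained than that of theorem \ref{Compactness-I}.
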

\begin{rk}
\begin{itemize}
\item If one is only interested in $C^{1,\alpha}$ convergence in the preceding theorem, assuming $\A^0(\R_g)\leq \Lambda_0$ is enough to ensure such convergence since the scalar curvature controls pointwise the curvature operator.
\item Again, compared to theorem \ref{Compactness-I}, theorem \ref{Compactness-II} does not assume any a priori bound on the curvature tensor : this is actually a consequence as shown in the proof below.
\item The proof of theorem \ref{Compactness-II} is more self-contained than the one of theorem \ref{Compactness-II} since it does not use the involved non collapsing theory developed in \cite{Pet-Tus-Pi_2}.
\end{itemize}
\end{rk}
\begin{proof}
By Theorem \ref{Compactness-I}, it suffices to prove that if $(M^n,g,\nabla f,p)$ belongs to $\cat{M}^{\vol}_{Exp}(n,(\Lambda_k)_{k\geq0},V_0)$ then 
\begin{eqnarray*}
\sup_{M^n}\arrowvert\Rm(g)\arrowvert\leq C(n,V_0).
\end{eqnarray*}

This estimate has already been proved in greater generality by Schulze and Simon \cite{Sch-Sim}. Nonetheless we give here a self-contained proof. This goes by contradiction. Assume there exists a sequence of expanding gradient Ricci solitons $(M_i^n,g_i,\nabla f_i,p_i)_i$ such that 
\begin{eqnarray*}
\Rm(g_i)\geq 0\quad;\quad\AVR(g_i)\geq V_0\quad;\quad\crit(f_i)=\{p_i\}\quad;\quad\sup_i\sup_{M_i^n}\R_{g_i}=+\infty.
\end{eqnarray*} 

As the Ricci curvatures of the metrics $g_i$ are nonnegative, the scalar curvatures $\R_{g_i}$ attain their maximum at the (unique) point $p_i\in M_i^n$. Moreover, we have the following estimates of the potential functions by proposition \ref{pot-fct-est} :
\begin{eqnarray}\label{pot-fct-seq}
\frac{1}{4}r_{p_i}(x)^2+\min_{M_i^n}v_i\leq v_i(x)\leq\left(\frac{1}{2}r_{p_i}(x)+\sqrt{\min_{M_i^n}v_i}\right)^2,
\end{eqnarray}
for any $x\in M_i^n$. By (\ref{equ:2}), we have $\min_{M_i^n}v_i=\max_{M_i^n}\R_{g_i}+n/2=\R_{g_i}(p_i)+n/2$. Define the rescaled metrics $\tilde{g_i}(\tau):=Q_ig_i(Q_i^{-1}\tau)$ where $Q_i:=\R_{g_i}(p_i)+n/2$ and $g_i(\cdot)$ is the associated Ricci flow to the expanding gradient Ricci soliton $(M_i^n,g_i,\nabla f_i)$. Then the sequence $(M_i^n,\tilde{g_i}(\tau))_{\tau\in(-Q_i,+\infty)}$ of Ricci flows satisfies : $\sup_{M^n}\arrowvert\Rm(\tilde{g_i}(\tau))\arrowvert$ is uniformly bounded in the indices $i$ for $\tau$ in a compact interval of $(-Q_i,+\infty)$, $\R_{\tilde{g_i}(0)}(p_i)=1$, $\Rm(\tilde{g_i}(\tau))\geq 0$ and $\AVR(\tilde{g_i}(\tau))\geq V_0$ for $\tau \in (-Q_i,+\infty)$ which means in particular that $\inj_{\tilde{g_i}(0)}(p_i)\geq C(n,V_0)>0$.  Therefore, by Hamilton's compactness theorem \cite{Ham-Com}, there is a subsequence, still denoted by $(M_i^n,\tilde{g_i}(\tau),p_i)_i$ converging in the pointed Cheeger-Gromov topology to a non flat eternal solution of the Ricci flow $(M_{\infty}^n,g_{\infty}(\tau),p_{\infty})$ with nonnegative curvature operator and Euclidean volume growth, i.e. such that $\AVR(g_{\infty})>0$.
 Moreover, $\tilde{v_i}:=v_i-\min_{M_i^n}v_i$ satisfy by the estimates (\ref{pot-fct-seq}) : 
 \begin{eqnarray*}
 0\leq\frac{r_{\tilde{g_i}(0),p_i}^2}{4Q_i}\leq\tilde{v_i}\leq\frac{r_{\tilde{g_i}(0),p_i}^2}{4Q_i}+r_{\tilde{g_i}(0),p_i}\quad;\quad \tilde{v}_i(p_i)=0.
 \end{eqnarray*}
 Moreover, by (\ref{equ:2}), $$\arrowvert\nabla^{\tilde{g_i}(0)}\tilde{v_i}\arrowvert_{\tilde{g_i}(0)}\leq Q_i^{-1}\left(\frac{r_{\tilde{g_i}(0),p_i}^2}{4Q_i}+r_{\tilde{g_i}(0),p_i}+Q_i\right),$$ which means in particular that the sequence of functions $(\tilde{v_i})_i$ is equicontinuous. Finally, the soliton equation can be rewritten as 
 \begin{eqnarray*}
\nabla^{2,\tilde{g_i}(0)}\tilde{v_i}=\Ric(\tilde{g_i}(0))+\frac{\tilde{g_i}(0)}{2Q_i}.
\end{eqnarray*}
Therefore, the higher covariant derivatives of $\tilde{v_i}$ are uniformly bounded and $\tilde{v_i}$ converges smoothly to a smooth function $v_{\infty}$ which satisfies $$\nabla^{2,g_{\infty}}v_{\infty}=\Ric(g_{\infty}),$$
i.e. $(M_{\infty},g_{\infty},\nabla v_{\infty})$ is a non flat steady gradient Ricci soliton with nonnegative curvature operator and positive asymptotic volume ratio : a contradiction to 
 Hamilton's argument [Chap.$9$, Sec. $3$, \cite{Ben}] which can be proved by induction on the dimension.

\end{proof}

\bibliographystyle{alpha.bst}
\bibliography{bib-comp-egs}

\end{document}